\def\d{\mathrm{d}}
\def\laweq{\buildrel \d \over =}
\newcommand{\var}{\mathrm{Var}}
\newcommand{\cov}{\mathrm{Cov}}
\newcommand{\corr}{\mathrm{Corr}}
\newcommand{\E}{\mathbb{E}}
\newcommand{\R}{\mathbb{R}}
\newcommand{\I}{[0,1]}
\newcommand{\N}{\mathbb{N}}
\newcommand{\p}{\mathbb{P}}
\newcommand{\id}{\mathds{1}}
\newcommand{\be}{\mathbf{e}}
\newcommand{\bx}{\mathbf{x}}
\newcommand{\by}{\mathbf{y}}
\newcommand{\bz}{\mathbf{z}}
\newcommand{\bX}{\mathbf{X}}
\newcommand{\bu}{\mathbf{u}}
\newcommand{\bp}{\mathbf{p}}
\newcommand{\bzero}{\mathbf{0}}
\newcommand{\bone}{\mathbf{1}}
\newcommand{\T}{^{\top}}
\newcommand{\diag}{\operatorname{diag}}
\renewcommand{\(}{\left(}
\renewcommand{\)}{\right)}
\newcommand{\supp}{\mathrm{Supp}}
\renewcommand{\ge}{\geqslant}
\renewcommand{\le}{\leqslant}
\renewcommand{\geq}{\geqslant}
\renewcommand{\leq}{\leqslant}
\renewcommand{\epsilon}{\varepsilon}
\newcommand{\PreserveBackslash}[1]{\let\temp=\\#1\let\\=\temp}
\newcolumntype{C}[1]{>{\PreserveBackslash\centering}p{#1}}
\newcolumntype{R}[1]{>{\PreserveBackslash\raggedleft}p{#1}}
\newcolumntype{L}[1]{>{\PreserveBackslash\raggedright}p{#1}}
\def\ttabular{%
\hbox\bgroup
\let\\\cr
\def\rulea{\ifnum\rowc=\@ne \hrule height 1.3pt \fi}
\def\ruleb{
\ifnum\rowc=1\hrule height 1.3pt \else
\ifnum\rowc=6\hrule height \heavyrulewidth
   \else \hrule height \lightrulewidth\fi\fi}
\valign\bgroup
\global\rowc\@ne
\rulea
\hbox to 10em{\strut \hfill##\hfill}%
\ruleb
&&%
\global\advance\rowc\@ne
\hbox to 10em{\strut\hfill##\hfill}%
\ruleb
\cr}
\def\endttabular{%
\crcr\egroup\egroup}
\theoremstyle{plain}
\newtheorem{theorem}{Theorem}
\newtheorem{corollary}{Corollary}
\newtheorem{lemma}{Lemma}
\newtheorem{proposition}{Proposition}
\theoremstyle{definition}
\newtheorem{definition}{Definition}
\newtheorem{example}{Example}
\newtheorem{remark}{Remark}
\DeclareRobustCommand{\bsquare}{%
  \mathop{\vphantom{\sum}\mathpalette\bigstar@\relax}\slimits@
}
\newcommand{\bigstar@}[2]{%
  \vcenter{%
    \sbox\z@{$#1\sum$}%
    \hbox{\resizebox{.9\dimexpr\ht\z@+\dp\z@}{!}{$\m@th\dsquare$}}%
  }%
}
\newcommand{\dsquare}{\mathop{  \square} \displaylimits}
\begin{document}

\title{Invariant correlation under marginal transforms}

\author{Takaaki Koike\thanks{
Graduate School of Economics, Hitotsubashi University, Japan.
Email:\texttt{takaaki.koike@r.hit-u.ac.jp}
} \and Liyuan Lin\thanks{Department of Econometrics and Business Statistics, Monash University, Australia.  Email: \texttt{liyuan.lin@monash.edu}
} \and  Ruodu Wang\thanks{Department of Statistics and Actuarial Science, University of Waterloo, Canada.
Email: \texttt{wang@uwaterloo.ca}
}
}
\maketitle

\begin{abstract}
A useful property of independent samples is that their correlation remains the same  after applying marginal  transforms.
This invariance property plays a fundamental role in statistical inference, but does not hold in general for dependent samples.
In this paper, we study this invariance property on the Pearson correlation coefficient and its applications.
A multivariate random vector is said to have an invariant correlation if its pairwise correlation coefficients remain unchanged under any common marginal transforms.
For a bivariate case, we characterize all models of such a random vector via a certain combination of comonotonicity---the strongest form of positive dependence---and independence.
In particular, we show that the class of exchangeable copulas with invariant correlation is precisely described by what we call positive Fr\'echet copulas.
In the general multivariate case, we characterize the set of all invariant correlation matrices via the clique partition polytope.
We also propose a positive regression dependent model that admits any prescribed invariant correlation matrix.
Finally, we show that all our characterization results of invariant correlation, except one special case, remain the same if the common marginal transforms are confined to the set of increasing ones.
\medskip
\\
\hspace{0mm}\\
\noindent \emph{MSC classification:}
60E05, 
62E15, 
62H10,  
62H20. 
\\
\noindent   \emph{Keywords:}
correlation coefficient;
dependence matrices;
Fr\'echet copula; 
positive regression dependence.\\
\end{abstract}

\section{Introduction}\label{sec:intro}

The  {(Pearson) correlation coefficient} is one of the most popular measures of quantifying the strength of statistical dependence.
It 
is also called the linear correlation coefficient, 
reflecting on the fact that it measures linear dependence among random variables. 
As such, the correlation coefficient is  preserved under common linear transforms.
When non-linear transforms are applied, the correlation coefficient typically changes except some specific dependence structures such as independence.
In fact, this invariance is a useful and notable property of independent samples, and plays a fundamental role in statistics and probability theory.

To define this invariance property more formally, a $d$-dimensional random vector $\mathbf{X}=(X_1,\dots,X_d)$ with $\operatorname{Var}(X_i)\in (0,\infty)$, $i\in[d]=\{1,\dots,d\}$, is said to have an invariant correlation matrix $R=(r_{ij})_{d\times d}$ if
\begin{equation}\label{eq:inv:corr:all}
\corr(X_i,X_j) = \corr(g(X_i),g(X_j))=r_{ij}
\end{equation}
for all measurable functions $g:\R \to \R$ such that $\corr(g(X_i),g(X_j))$ is well defined for all $i,j\in[d]$.
Although this property appears strong, it is satisfied by some specific models existing in the literature, such as the conformal p-value, a useful statistical tool in machine learning and non-parametric inference~\citep{VGS05}.
The recent work
\cite{BCLRS23} showed that~\eqref{eq:inv:corr:all} holds for a set of null conformal p-values to discuss a broad class of combination tests.

Motivated by this recent work, this paper addresses the following questions.
(i) How large is the class of models with this invariance property?
(ii) Whether and how is this property connected to other dependence concepts?
(iii) What if the invariance property is confined to a smaller class of transforms relevant for applications, such as the class of monotone transforms?

Since~\eqref{eq:inv:corr:all} is essentially a bivariate property, we first focus on the case when $d=2$. 
It is straightforward to check that a pair of random variables $(X,Y)$ has an invariant correlation $r\in [-1,1]$ if $X$ and $Y$ have an identical marginal $F$ and their joint distribution $H$ is given by
\begin{align}\label{eq:r:frechet}
H(x,y)=r\min(F(x),F(y)) + (1-r) F(x)F(y),~~~ x,y\in \R.
\end{align}
We say that this model or its distribution is {$r$-Fr\'echet} due to its connection to the Fr\'echet copulas; see~\cite{DDGK05,YCZ06} for applications in risk modeling.
If $r\in [0,1]$, this model is a probabilistic mixture of comonotonic and independent cases, where two random variables $X$ and $Y$ are said to be {comonotonic} if $X=f(Z)$ and $Y=h(Z)$ for some random variable $Z$ and two increasing functions $f,h:\R\rightarrow \R$.
All terms like ``increasing" and ``decreasing" are in the non-strict sense in this paper.

Since invariant correlation is a strong property, one may wonder whether models of the form~\eqref{eq:r:frechet}
are exhaustive. It turns out that this is not the case, but not far away from the truth. 
As one of our main contributions, 
we give a complete characterization of all bivariate random vectors having an invariant correlation~\eqref{eq:inv:corr:all} using the asymmetric analogs of independence and the $r$-Fr\'echet model, which we call {quasi-independence} (Definition~\ref{def:quasi-ind}) and the {quasi-$r$-Fr\'echet model} (Definition~\ref{def:quasi:frechet}), respectively.
Our characterization results are summarized in Table \ref{table:IC}.  
We prove that zero invariant correlation is equivalent to quasi-independence if neither of the marginals is bi-atomic (i.e., supported on two-points), and to independence otherwise.
We also find that, for non-identical marginals, non-zero invariant correlation is not possible except when both marginals are bi-atomic.
For the case of identical marginals, it turns out that a model has a non-zero invariant correlation $r$ if and only if it is quasi-$r$-Fr\'echet.
In particular, when $X$ and $Y$ are exchangeable (i.e., $(X,Y)\laweq (Y,X)$, where $\laweq$ stands for equality in distribution), then the   $r$-Fr\'echet model~\eqref{eq:r:frechet} characterizes the model $(X,Y)$ with invariant correlation $r$.
 \begin{table}
\caption{Summary of characterization results for bivariate distributions with invariant correlation, where QI stands for quasi-independence,  QF stands for the quasi-Fr\'echet model, IN stands for  independence, 
IS stands for  models with identical marginal supports,
and   $\varnothing$ means no such model.}\label{table:IC}
\def\arraystretch{1.3}
\vskip-0.3cm\hrule
\smallskip
\centering\small
\begin{tabular}{c c cc}
 \multicolumn{2}{c}{Marginals $F, G$}& $r=0$ &$r\neq 0$\\
 \hline
 \multicolumn{2}{l}{$F=G$} & QI (Theorem  \ref{thm:ch:zero:ic}/\ref{thm:main:sec:4}) & QF (Theorem \ref{thm:main:sec:4}) \\
 \hline
 \multirow{3}{*}{$F\neq G$}   & both are bi-atomic & \multirow{2}{*}{IN (Proposition \ref{pro:ic0:one_bi})} &  IS (Proposition \ref{pro:bi-atomic}) \\
\cline{2-2}
\cline{4-4} 
  & precisely one is bi-atomic &   &  \multirow{2}{*}{$\varnothing$  (Theorem \ref{thm:main:sec:3})} \\
 \cline{2-2}
 \cline{3-3}
 & both  are not bi-atomic & QI (Theorem \ref{thm:ch:zero:ic})   &   \\
\end{tabular}
\hrule
\end{table}

Such a complete characterization cannot be expected for a general multivariate case since~\eqref{eq:inv:corr:all} is a requirement on bivariate margins. For a general $d\ge 2$, we prove that the set of all invariant correlation matrices attained by random vectors with continuous marginals coincides with the {clique partition polytope}~\citep{GW90}.
Based on this characterization, we provide a numerical procedure to check whether a given matrix is admissible as an invariant correlation matrix.
For $k\in \mathbb{N}$ and iid observations $Y_1,\dots,Y_k$ from a continuous and strictly increasing distribution $F$, we then consider the model:
 \begin{align}\label{eq:model:intro}
 \mathbf X= \Gamma \mathbf Y  =( \mathbf Z_1^\top \mathbf Y,\dots,\mathbf Z_d^\top  \mathbf Y),\quad
\Gamma=( \mathbf Z_1,\dots, \mathbf Z_d)^\top,\quad  \mathbf Y=(Y_1,\dots,Y_k)^\top,
 \end{align}
with $\mathbf Z_i$, $i\in[d]$, being $\{0,1\}^k$-valued Bernoulli random vector such that $\mathbf Z_i^\top \mathbf Z_i=1$ and $\Gamma$ being independent of $\mathbf Y$. 
We show that the model~\eqref{eq:model:intro} has an identical marginal $F$ and an invariant correlation matrix; moreover, for $k\ge d$, this model accommodates any admissible invariant correlation matrix.
In addition, we show that this model has {positive regression dependence}~\citep{L66}, an important dependence concept for controlling p-values in the context of multiple testing~\citep{BY01}. 

In many applications, only increasing transforms of the random variables are relevant since non-increasing transforms do not preserve the joint distribution of the ranks of the variables.
In view of this, we also study a variant of invariant correlation where we require \eqref{eq:inv:corr:all} to hold only for increasing transforms instead of all transforms.
By definition, this requirement is weaker 
than the original invariant correlation property~\eqref{eq:inv:corr:all}. 
A natural question is whether this formulation allows for more models than those characterized by~\eqref{eq:inv:corr:all}.
It turns out that invariant correlation confined to increasing transforms does not accommodate more models except for the case when both random variables are bi-atomic.

The paper is organized as follows. 
In Section~\ref{sec:2}, we begin with introducing notation, definitions and basic properties of invariant correlation.
Section~\ref{sec:3} is then devoted to  a full characterization of invariant correlation in the bivariate setting.
Invariant correlation matrix is studied in Section~\ref{sec:matrix}, where a model admitting any prescribed invariant correlation matrix is proposed.
This section also explores positive regression dependence of this model.
Section~\ref{sec:ic:inc} provides the characterization of invariant correlation under increasing transforms.
Discussion and directions for future research are given in Section~\ref{sec:conclusion}.
Proofs and auxiliary results are deferred to Appendix.

\section{Definitions and basic properties}\label{sec:2}

In this section, we provide definitions and basic properties of invariant correlation and related models.
We fix an atomless probability space $(\Omega,\mathcal A,\p)$ throughout the paper.
Denote by $\mathcal L^2$ the set of all non-degenerate real-valued random variables with finite variance. 
For $d\in\N$, denote by ${\mathcal B}(\R^d)$ the Borel $\sigma$-algebra on $\R^d$.
We also write $\mathbf{0}_d=(0,\dots,0)^\top$ and $\mathbf{1}_d=(1,\dots,1)^\top$ for the $d$-dimensional vector of zeros and ones, respectively.
 For a random vector $\mathbf X$, denote by $F_{\mathbf X}$ its distribution function. 
 
For $X,Y \in \mathcal L^2$, the {Pearson correlation coefficient} of $(X,Y)$ is $$\corr(X,Y)=\frac{\cov(X,Y)}{\sqrt{\var(X)\var(Y))}}.$$ 
We are interested in the following invariance of correlation under certain transforms of $(X,Y)$.
In this context, we call a function $g:\R \rightarrow \R$ {admissible} (for $(X,Y)$, omitted if clear) if it is measurable and $\corr(g(X),g(Y))$ is well defined, i.e., $g(X),g(Y)\in \mathcal L^2$.
Strictly monotone functions with bounded derivatives are always admissible. 


\begin{definition}
Let $r\in [-1,1]$.
A bivariate random vector $(X,Y)$ is said to have an {invariant correlation} $r$ 
if
\begin{align}\label{eq:inv:corr}
\corr(X,Y) = \corr(g(X),g(Y))=r\quad \text{for all admissible functions $g$.}
\end{align}
\end{definition}

The set of all  bivariate random vectors with invariant correlation $r$ is denoted by $\mathrm{IC}_r$. Moreover, let $\mathrm{IC}=\bigcup_{r\in [-1,1]}\mathrm{IC}_r$, which is the set of all bivariate random vectors having any invariant correlation.

The concept of invariance correlation can be naturally formulated dimension larger than $2$.
For $d\ge 2$, a $d$-dimensional random vector $\bX$ is said to have an {invariant correlation matrix} $R=(r_{ij})_{d\times d}$ if $(X_i,X_j) \in \mathrm{IC}_{r_{ij}}$ for every pair $(i,j) \in [d]^2$. The set of all such random vectors is denoted by $\mathrm{IC}_{R}$.
For a distribution function $H$ on $\R^d$, we write $\mathbf X\sim H$ if $\mathbf X$ is distributed according to $H$, and  $H \sim \mathrm{IC}_R$ means $\mathbf X \in \mathrm{IC}_R$ for some $\mathbf X\sim H$. 
Denote by $\mathrm{IC}^{d}=\bigcup_{R \in \mathcal P_d}\mathrm{IC}_R$ the collection of all $d$-dimensional random vectors with invariant correlation matrix, where $\mathcal P_d$ is the set of all $d\times d$ correlation matrices. A measurable function $g$ is admissible  for $\mathbf X$ if $g(X_i)\in \mathcal L^2$ for each $i\in [d]$.
Note that $\mathrm{IC}^2=\mathrm{IC}$ is a special case.  
Although the problem of   invariance correlation is more general in dimension $d\ge 2$, 
its  theoretical challenge lies mostly in the case $d=2$, because the correlation coefficient is naturally a bivariate concept. 

Let us first look at a few basic examples of invariant correlation.

\begin{example}\label{ex:independent}
If a $d$-dimensional random vector $\mathbf X=(X_1, \dots, X_d)$ is independent, then so is $(g(X_1),\dots,g(X_d))$  for every measurable $g$. Hence, we have $\mathbf X \in \mathrm{IC}_{I_d}$, where $I_d=\operatorname{diag}(\mathbf{1}_d)$. In dimension $d=2$, we have $(X_1, X_2) \in \mathrm{IC}_0$.
\end{example}


\begin{example}\label{ex:frechet}
Suppose that $A$ is a random event with $\p(A)=r\in \I$, and let $\mathbf X=(X, \dots, X)$ and $\mathbf X^\bot=(X_1, \dots, X_d)$, where $X, X_1, \dots, X_d$ are iid random variables independent of $A$.
Let
\begin{equation}\label{eq:3a}
\mathbf Y = (Y_1,\dots,Y_d)= \mathbf X\id_A + \mathbf X^\bot \id_{A^c}.
\end{equation}
It is easy to verify that, for $i,j\in[d]$, $i\neq j$, and for every admissible $g$,
$$
\corr(g(Y_i),g(Y_j)) = \frac{\cov(g(Y_i),g(Y_j))}{\var(g(X))} = \frac{\var(g(X))\p(A)}{\var(g(X))}  = \p(A) =r.
$$
Hence, we have $\mathbf Y\in \mathrm{IC}^d$ where  all  off-diagonal elements in the invariant correlation matrix  equal $r$.
\end{example}

The following simple properties follow directly from the definition of invariant correlation, and will be used repeatedly in the subsequent analyses.


\begin{proposition}\label{lem:h}
If $\mathbf X=(X_1, \dots, X_d) \in \mathrm{IC}_R$ for $R\in \mathcal P_d$ and  $h$ is an admissible function for $\mathbf X$, then $(h(X_1), \dots, h(X_d))\in \mathrm{IC}_R$.
\end{proposition}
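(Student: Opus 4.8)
The plan is to exploit the fact that the defining property of $\mathrm{IC}_R$ quantifies over \emph{all} admissible transforms, so that post-composing the coordinates with a fixed $h$ simply restricts attention to transforms of the special form $g\circ h$. First I would set $\mathbf Y=(Y_1,\dots,Y_d):=(h(X_1),\dots,h(X_d))$ and record that, since $h$ is admissible for $\mathbf X$, each $Y_i=h(X_i)\in\mathcal L^2$; hence $\mathbf Y$ is a legitimate random vector whose correlation matrix is well defined, and every $g$ that is admissible for $\mathbf Y$ gives $g(Y_i)\in\mathcal L^2$ for each $i$.

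Next I would verify the ``base'' equalities $\corr(Y_i,Y_j)=r_{ij}$ for all pairs $(i,j)\in[d]^2$. This is immediate: applying the definition of $\mathbf X\in\mathrm{IC}_R$ to the admissible function $h$ itself yields $\corr(h(X_i),h(X_j))=r_{ij}$.

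The substantive step is to show that $\corr(g(Y_i),g(Y_j))=r_{ij}$ for every $g$ admissible for $\mathbf Y$. The key observation is that ``$g$ is admissible for $\mathbf Y$'' means precisely $g(Y_i)=(g\circ h)(X_i)\in\mathcal L^2$ for each $i$, which is exactly the condition that the measurable function $g\circ h$ is admissible for $\mathbf X$. Therefore $\corr\big((g\circ h)(X_i),(g\circ h)(X_j)\big)=r_{ij}$ by $\mathbf X\in\mathrm{IC}_R$, and since $(g\circ h)(X_i)=g(Y_i)$ this is the desired identity. Combining with the previous paragraph gives $(h(X_1),\dots,h(X_d))\in\mathrm{IC}_R$.

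There is essentially no obstacle here; the only point needing a little care is the bookkeeping of admissibility, namely checking that ``$g$ admissible for $\mathbf Y$'' and ``$g\circ h$ admissible for $\mathbf X$'' are literally the same requirement, which is immediate from the definition of admissibility as coordinatewise $\mathcal L^2$-membership. I would also note that, specialized to $d=2$, this says $\mathrm{IC}_r$ is closed under common admissible transforms, a fact used repeatedly in the sequel.
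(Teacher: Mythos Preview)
Your proof is correct and follows essentially the same route as the paper: the key observation in both is that if $g$ is admissible for $(h(X_1),\dots,h(X_d))$ then $g\circ h$ is admissible for $\mathbf X$, so the invariant-correlation hypothesis on $\mathbf X$ applied to $g\circ h$ gives $\corr(g(h(X_i)),g(h(X_j)))=r_{ij}$. The paper's argument is simply a terser version of yours, proving the bivariate case first and then applying it pairwise.
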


For a $d$-dimensional random vector $\mathbf X$, its copula $C$  is a distribution function on $[0,1]^d$ with standard uniform  marginals such that 
$
F_{\mathbf X}(x_1,\dots,x_n) = C(F_1(x_1),\dots,F_n(x_n))$ for $(x_1,\dots,x_n)\in \R^n$,
where   $F_1,\dots,F_n$ are the marginals of ${\mathbf X}$; see \cite{J14} for background on copulas. 
A copula of $\mathbf X$ is unique on $\operatorname{Ran}(F_1)\times \cdots \times \operatorname{Ran}(F_d)$, where $\operatorname{Ran}(F)$ is the range of a distribution funtion $F$.
In particular, if $\mathbf X$ has continuous marginals, then its copula is unique.  
It is known that every $d$-dimensional copula $C$ satisfies $W_d(\bu)\leq C(\bu)\leq M_d(\bu)$ for all $\bu=(u_1\dots,u_d) \in \I^d$, where $W_d(\bu)=\max(u_1+\cdots+u_d +1-d,0)$ and $M_d(\bu)=\min(u_1,\dots,u_d)$ are called {Fr\'echet bounds}.
Note that $M_d$ is a $d$-dimensional copula for every $d\geq 2$, and $W_d$ is a copula only when $d=2$.
These bounds, together with the {independence copula} $\Pi_d(\bu)=\prod_{i=1}^d u_i$, $\bu \in \I^d$, play fundamental roles in dependence analysis.

For $d=2$, we write $M=M_2$, $\Pi=\Pi_2$ and $W=W_2$ in short.
A (bivariate) {Fr\'echet copula} is a $2$-dimensional copula defined as
\begin{align}\label{eq:def:frechet:cop}
C_{r,s}^{\text{F}}(u,v)=r M(u,v) + s W(u,v) + (1-r-s) \Pi(u,v),\quad (u,v)\in \I^2,
\end{align}
which is parametrized by $r,s \in\I$ such that $r+s \leq 1$.
If $s=0$ in~\eqref{eq:def:frechet:cop}, a Fr\'echet copula is called {positive} and we denote it by $C_r^{\text{F}}$.
In fact, when $d=2$ and the identical distribution of $X,X_1,X_2$ in Example \ref{ex:frechet} is continuous, then $\mathbf{X}$ and $\mathbf{X}^\perp$ has the copula $M$ and $\Pi$, respectively.
Moreover, the copula of $\mathbf Y$ is the positive  Fr\'echet copula $C_r^{\text{F}}$.
It is easy to see that a bivariate random vector $(Y_1,Y_2)$ has a positive Fr\'echet copula $C_r^{\text{F}}$
if and only if the stochastic representation \eqref{eq:3a} with $d=2$ holds almost surely for some event $A$ such that $\p(A)=r$.

Let $\mathbf{X}$ be a random vector with continuous and identical marginals.
A useful consequence of Proposition~\ref{lem:h} is that it suffices to consider the copula of $\mathbf{X}$ to analyze invariant correlation of $\mathbf{X}$.

\begin{corollary}\label{cor:copula}
 Suppose that $\mathbf X\sim H$ has identical continuous and strictly increasing marginals, a correlation matrix $R\in \mathcal P_d$ and a copula $C$.
 Then $H\sim  \mathrm{IC}_R$ if and only if $ C\sim \mathrm{IC}_R$.
\end{corollary}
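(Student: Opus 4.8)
The plan is to obtain both implications as direct applications of Proposition~\ref{lem:h}, taking $h$ to be the common marginal $F$ in one direction and its inverse $F^{-1}$ in the other. Since $F$ is continuous and strictly increasing, it admits a (generalized) inverse $F^{-1}$ with $F^{-1}(F(x))=x$ on the support of $F$ and $F(F^{-1}(u))=u$ for $u\in\operatorname{Ran}(F)$; being monotone, $F^{-1}$ is measurable, and it is defined on a set carrying all the mass of the standard uniform law (the two endpoints $0,1$, where it may fail to be defined, have uniform measure zero, so any measurable extension there is irrelevant). I would first record the standard transfer fact: for $\mathbf X=(X_1,\dots,X_d)\sim H$, continuity of $F$ makes each $F(X_i)$ standard uniform, so $\mathbf U:=(F(X_1),\dots,F(X_d))\sim C$ since $C$ is the (unique) copula of $H$; conversely, if $\mathbf U=(U_1,\dots,U_d)\sim C$ then $(F^{-1}(U_1),\dots,F^{-1}(U_d))\sim H$.

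The second preliminary point is admissibility. The map $F$ is admissible for $\mathbf X$ because it is measurable and each $F(X_i)$ is non-degenerate and bounded, hence in $\mathcal L^2$. The map $F^{-1}$ is admissible for $\mathbf U$ because it is measurable and $F^{-1}(U_i)$ has law $F=F_{X_i}$, so $\operatorname{Var}(F^{-1}(U_i))=\operatorname{Var}(X_i)\in(0,\infty)$ by the standing assumption on $\mathbf X$, giving $F^{-1}(U_i)\in\mathcal L^2$. Note also that membership in $\mathrm{IC}_R$ depends only on the joint law, since it is defined purely through correlations of $(X_i,X_j)$ and $(g(X_i),g(X_j))$ and through the $\mathcal L^2$-condition defining admissibility.

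With these in hand the corollary follows. If $H\sim\mathrm{IC}_R$, pick $\mathbf X\sim H$, so $\mathbf X\in\mathrm{IC}_R$; Proposition~\ref{lem:h} with $h=F$ gives $\mathbf U=(F(X_1),\dots,F(X_d))\in\mathrm{IC}_R$, and since $\mathbf U\sim C$ this reads $C\sim\mathrm{IC}_R$. Conversely, if $C\sim\mathrm{IC}_R$, pick $\mathbf U\sim C$, so $\mathbf U\in\mathrm{IC}_R$; Proposition~\ref{lem:h} with $h=F^{-1}$ gives $(F^{-1}(U_1),\dots,F^{-1}(U_d))\in\mathrm{IC}_R$, and this vector has law $H$, so $H\sim\mathrm{IC}_R$. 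I do not anticipate any genuine obstacle: the only steps requiring a line of care are verifying that $F^{-1}$ is admissible for $\mathbf U$ (this is where the finite-variance hypothesis on $\mathbf X$ is used) and the harmless endpoint issue for $F^{-1}$ noted above.
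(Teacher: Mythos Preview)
Your proposal is correct and follows essentially the same approach as the paper: apply Proposition~\ref{lem:h} with $h=F$ for the forward direction and $h=F^{-1}$ for the converse. You are simply more explicit than the paper about admissibility and the endpoint behaviour of $F^{-1}$, but the underlying argument is identical.
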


We next introduce some special structures on bivariate distributions, which turn out to be equivalent to invariant correlation in Sections~\ref{sec:3} and~\ref{sec:ic:inc}.
We first present a concept of dependence which is close to independence.
\begin{definition}\label{def:quasi-ind}
We say that a random vector $(X,Y)\sim H$ with marginals $F$ and $G$ is {quasi-independent} 
if   \begin{equation}\label{eq:quasi}
 \frac{H(x,y)+ H(y,x)}{2}= \frac{1}{2}F(x)G(y)+ \frac{1}{2}F(y) G(x) \mbox{~~for all $x,y\in \R$}.
  \end{equation}
\end{definition}
By a standard probabilistic argument,~\eqref{eq:quasi} is equivalent to 
\begin{equation}\label{eq:quasi:2}
  \p(X \in A, Y\in B)+\p(X \in B, Y\in A)=\p(X\in A)\p(Y \in B)+\p(X\in B)\p(Y\in A),
  \end{equation}  for all $A, B \in {\mathcal B}(\R)$; see Lemma \ref{lem:quasi} in Appendix \ref{app:2}.
  Clearly, independence implies quasi-independence, but the opposite implication is not true. Quasi-independence plays an important role in characterizing the dependence structure for $(X,Y) \in \mathrm{IC}_0$.



We then consider an extension of the positive Fr\'echet copula so that the marginals are not standard uniform and the dependence structure is not exchangeable.
Note, however, that the marginal distributions are assumed to be identical in the following concept.

\begin{definition}\label{def:quasi:frechet}
 Let $(X,Y)$ be a random vector with identical marginal $F$ and joint distribution $H$. We say that $(X,Y)$ is {quasi-$r$-Fr\'echet} for some $r\in [-1,1]$ if  
\begin{align}\label{eq:identical:identity}
\frac{H(x,y)+H(y,x)}{2}=r \min(F(x),F(y)) + (1-r)F(x)F(y),~~~\mbox{for all $x,y\in \R$}.
\end{align}
\end{definition}

The need to introduce quasi-independence and quasi-Fr\'echet model stems from the permutation invariance of the correlation coefficient $\operatorname{Corr}(X,Y)=\operatorname{Corr}(Y,X)$, that is, correlation coefficient does not capture directional (non-exchangeable) dependence.
Note that we require identical marginals of $(X,Y)$ in the quasi-Fr\'echet model, but no such requirement is imposed on quasi-independence. When $(X,Y)$ has an identical marginal, the quasi-Fr\'echet model with $r=0$ reduces to quasi-independence. 

Denote by $\mu$, $\mu_{+}$ and $\mu_{\perp}$ the probability measures induced by the cumulative distribution functions $H$,
$H_{+}$ and $H_{\perp}$, respectively, where $H_{+}(x,y)=\min(F(x),F(y))$ and $H_{\perp}(x,y)=F(x)F(y)$, $(x,y)\in \R$.
By the standard measure-theoretic argument (see Lemma~\ref{lem:quasi_Fre} in Appendix \ref{app:2}), 
we have that the condition \eqref{eq:identical:identity} is equivalent to 
 \begin{align} \label{eq:identical:identity:set}
\frac{ \mu(A\times B) + \mu(B \times A)}{2}=r \mu_{+}(A\times B) + (1-r)\mu_{\perp}(A\times B),\quad \text{ for all }A,B\in {\mathcal B}(\R).
\end{align}
The term ``quasi-Fr\'echet" reflects its connection to  the Fr\'echet copula  in \eqref{eq:def:frechet:cop}, although here $r$ may be negative.
When $(X,Y)$ is exchangeable, the quasi-$r$-Fr\'echet model reduces to~\eqref{eq:r:frechet},
which again yields the positive Fr\'echet copula $C_r^\text{F}$ when the identical marginal is the standard uniform. 

\section{Bivariate invariant correlation}\label{sec:3}

In this section we focus on the bivariate case $d=2$.
We will characterize dependence structures of bivariate random vectors $(X,Y)$ with invariant correlation.
The main results of this section are summarized in Table \ref{table:IC}.  
 For a random variable $X$, the support of the distribution of $X$ is given by $\supp(X)=\{x\in \R: \p(x-\epsilon< X\le x+\epsilon)>0 ~\mbox{for all}~~\epsilon>0\}$.

\subsection{Zero invariant correlation}\label{sec:ic0}
We first characterize the joint distribution in the case $(X,Y)$ has an invariant correlation $0$.
As we will see later, this case is fundamentally different from the case $r\ne 0$.

In Example \ref{ex:independent}, we have seen that $(X,Y) \in \mathrm{IC}_0$ if $(X,Y)$ is independent.
A natural first question is whether independence is the only possibility for $\mathrm{IC}_0$.

\begin{theorem}\label{thm:ch:zero:ic}
 For   $X,Y \in \mathcal L^2$,   $(X,Y) \in \mathrm{IC}_0$ if and only if $(X,Y)$ is quasi-independent.
\end{theorem}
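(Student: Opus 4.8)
The plan is to prove the two implications separately, with the ``only if'' direction being the substantive one. First I would dispose of the easy direction: if $(X,Y)$ is quasi-independent, then by~\eqref{eq:quasi:2} we get for all Borel $A,B$ that $\p(X\in A,Y\in B)+\p(X\in B,Y\in A)=\p(X\in A)\p(Y\in B)+\p(X\in B)\p(Y\in A)$. Applying this with $A=B=g^{-1}((t,\infty))$ for an admissible $g$ and a threshold $t$, and more generally exploiting that quasi-independence of $(X,Y)$ passes to quasi-independence of $(g(X),g(Y))$ (the pushforward of~\eqref{eq:quasi:2} under $g\times g$), one obtains $\E[g(X)g(Y)]=\E[g(X)]\E[g(Y)]$ by a standard approximation of $g(X),g(Y)\in\mathcal L^2$ by simple functions, hence $\cov(g(X),g(Y))=0$ and $\corr(g(X),g(Y))=0$. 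So $(X,Y)\in\mathrm{IC}_0$.

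For the ``only if'' direction, suppose $(X,Y)\in\mathrm{IC}_0$. The key idea is that applying~\eqref{eq:inv:corr} to indicator-type transforms extracts information about the joint law. Fix Borel sets $A,B$ and consider the admissible function $g=a\,\id_A + b\,\id_B$ for scalars $a,b$; more conveniently, one can first handle $g=\id_{A}$ for a single set, which gives $\cov(\id_A(X),\id_A(Y))=0$, i.e. $\p(X\in A,Y\in A)=\p(X\in A)\p(Y\in A)$ for every $A$. This is a ``diagonal'' version of quasi-independence. To get the full off-diagonal statement~\eqref{eq:quasi:2}, I would apply the diagonal identity to $g=\alpha\id_A+\beta\id_B$ evaluated at sets $A$ with a free parameter, or more directly take $g=\id_{A\cup B}$ together with $g=\id_A$ and $g=\id_B$ and combine: from $\p(X\in A\cup B,Y\in A\cup B)=\p(X\in A\cup B)\p(Y\in A\cup B)$, expand both sides using inclusion-exclusion and subtract the identities for $A$ and for $B$ and for $A\cap B$. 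The cross terms that survive are exactly $\p(X\in A,Y\in B)+\p(X\in B,Y\in A)$ on the left and $\p(X\in A)\p(Y\in B)+\p(X\in B)\p(Y\in A)$ on the right (after accounting for overlaps), which is~\eqref{eq:quasi:2}. Reducing to disjoint $A,B$ first, and then to general $A,B$ by writing $A=(A\setminus B)\cup(A\cap B)$, keeps the bookkeeping manageable; one should be careful that all indicators involved are genuinely admissible, which holds since they are bounded.

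I expect the main obstacle to be the passage from the ``diagonal'' consequence $\cov(\id_A(X),\id_A(Y))=0$ to the genuinely bivariate~\eqref{eq:quasi:2}: a single-set indicator only probes the symmetric quantity $\p(X\in A,Y\in A)$, and one must cleverly choose a family of transforms (unions of sets, or two-valued step functions $a\id_A+b\id_B$ with $a\ne b$) so that expanding $\cov(g(X),g(Y))=0$ and letting the coefficients vary yields the separate cross-terms. The cleanest route is probably the step-function one: with $g=\id_A+\id_B=\id_{A\cap B}+\id_{A\cup B}$... rather, take $g$ taking value $1$ on $A\setminus B$, value $1$ on $B\setminus A$ is the same value, so instead use $g=\id_{A}+2\,\id_{B\setminus A}$ or simply two distinct values on $A\setminus B$ and $B\setminus A$ and expand $\var$ and $\cov$ as quadratic forms in those values; matching coefficients of the bilinear term isolates $\p(X\in A\setminus B, Y\in B\setminus A)+\p(X\in B\setminus A,Y\in A\setminus B)$ against the product, and reassembling over the pieces of $A,B$ gives~\eqref{eq:quasi:2}. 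Once~\eqref{eq:quasi:2} is established for all Borel $A,B$, Definition~\ref{def:quasi-ind} (via its equivalent form~\eqref{eq:quasi:2}) is exactly the assertion that $(X,Y)$ is quasi-independent, completing the proof.
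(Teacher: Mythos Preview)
Your proposal is correct and follows essentially the same approach as the paper: indicators of Borel sets for the ``only if'' direction, and approximation by simple functions for the ``if'' direction.

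Two remarks are worth making. First, the passage you identify as the ``main obstacle''---from the diagonal identity $\cov(\id_A(X),\id_A(Y))=0$ to the full cross-term statement~\eqref{eq:quasi:2}---is simpler than your inclusion-exclusion and coefficient-matching discussion suggests. The paper just takes $g=\id_A+\id_B$: by bilinearity of covariance,
\[
0=\cov(g(X),g(Y))=\cov(\id_A(X),\id_A(Y))+\cov(\id_B(X),\id_B(Y))+\cov(\id_A(X),\id_B(Y))+\cov(\id_B(X),\id_A(Y)),
\]
and the first two terms vanish by the single-set case, so the sum of the two cross terms is zero, which is exactly~\eqref{eq:quasi:2}. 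No disjointification or free parameters are needed. Second, the ``easy'' direction needs a bit more care than you indicate: after reducing to simple functions $g=\sum_i c_i\id_{A_i}$ (where quasi-independence directly kills the cross terms in the bilinear expansion of $\cov$), the paper passes to nonnegative admissible $g$ via monotone convergence and then to general $g$ by splitting $g=g_+-g_-$ and using that $\cov(G(X),G(Y))=0$ for $G=g_++g_-$ to control the mixed terms $\cov(g_+(X),g_-(Y))+\cov(g_-(X),g_+(Y))$.
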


Theorem \ref{thm:ch:zero:ic} shows the special role played by the notion of quasi-independence  as the only structure for  zero invariant correlation.
Let $(\pi_1,\pi_2)$ be a random vector uniformly distributed on $\{(1,2), (2,1)\}$.
For a random vector $(X_1, X_2)$ independent of $(\pi_1,\pi_2)$, we call $(X_{\pi_1}, X_{\pi_2})$ the {random rearrangement} of $(X_1, X_2)$, whose distribution is the equal mixture of $(X_1,X_2)$ and $(X_2,X_1)$.
 Although quasi-independence does not imply independence, the following proposition shows that quasi-independence of $(X_1,X_2)$ implies independence of its random rearrangement if $X_1$ and $X_2$ have the same distribution.
 

\begin{proposition}\label{pro:ic0:independent}
 Suppose that $X_1, X_2 \in \mathcal L^2$ have the same distribution. Then $(X_1,X_2) \in \mathrm{IC}_0$ if and only if its random rearrangement is independent.
\end{proposition}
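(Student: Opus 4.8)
\textbf{Proof plan for Proposition~\ref{pro:ic0:independent}.}
The plan is to reduce the statement to Theorem~\ref{thm:ch:zero:ic} by showing that, under the assumption $X_1 \laweq X_2$, quasi-independence of $(X_1,X_2)$ is \emph{equivalent} to independence of the random rearrangement $(X_{\pi_1},X_{\pi_2})$. Combined with Theorem~\ref{thm:ch:zero:ic}, which says $(X_1,X_2)\in\mathrm{IC}_0$ iff $(X_1,X_2)$ is quasi-independent, this yields the claim. So the only real content is this equivalence, and the common marginal hypothesis is exactly what makes it work.

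First I would write $F$ for the common distribution of $X_1$ and $X_2$, let $H$ be the joint distribution of $(X_1,X_2)$, and denote by $\widetilde H$ the distribution of the random rearrangement. By definition of random rearrangement, $\widetilde H(x,y) = \tfrac12 H(x,y) + \tfrac12 H(y,x)$ for all $x,y\in\R$, i.e. $\widetilde H$ is precisely the symmetrized distribution $(H(x,y)+H(y,x))/2$. The marginals of the random rearrangement are both equal to $F$ (each coordinate is an equal mixture of $X_1$ and $X_2$, which have the same law $F$). Now independence of $(X_{\pi_1},X_{\pi_2})$ means exactly $\widetilde H(x,y) = F(x)F(y)$ for all $x,y$. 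On the other hand, quasi-independence of $(X_1,X_2)$ with marginals $F$ and $G$ means $(H(x,y)+H(y,x))/2 = \tfrac12 F(x)G(y) + \tfrac12 F(y)G(x)$; since here $G=F$, the right-hand side collapses to $F(x)F(y)$. Hence both conditions are literally the same identity $\widetilde H(x,y)=F(x)F(y)$, which gives the equivalence.

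The last step is to invoke Theorem~\ref{thm:ch:zero:ic}: $(X_1,X_2)\in\mathrm{IC}_0$ iff $(X_1,X_2)$ is quasi-independent iff (by the paragraph above, using $X_1\laweq X_2$) the random rearrangement of $(X_1,X_2)$ is independent. This completes the proof.

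I do not anticipate a serious obstacle: the argument is essentially a bookkeeping identity once Theorem~\ref{thm:ch:zero:ic} is available. The one point that needs a line of care is confirming that the symmetrization of $H$ is genuinely a valid bivariate distribution function with both marginals equal to $F$ (so that ``independence of the random rearrangement'' is an unambiguous statement) — but this is immediate since $\widetilde H$ is a convex combination of two distribution functions with the prescribed marginals. It is also worth noting explicitly that the equal-marginals hypothesis is indispensable: without it, quasi-independence only pins down the symmetrized joint law as a product of mixed marginals $\tfrac12(F\otimes G + G\otimes F)$, which need not be a product distribution, so the random rearrangement need not be independent.
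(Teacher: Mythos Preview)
Your proposal is correct and follows essentially the same approach as the paper: both reduce the statement to Theorem~\ref{thm:ch:zero:ic} by observing that, under the identical-marginals assumption, quasi-independence of $(X_1,X_2)$ is the same identity as independence of the random rearrangement. The only cosmetic difference is that the paper phrases the argument in terms of Borel sets $A,B$ (the formulation~\eqref{eq:quasi:2}) while you work directly with distribution functions (the formulation~\eqref{eq:quasi}); by Lemma~\ref{lem:quasi} these are equivalent.
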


In particular, if $(X_1,X_2)$ is exchangeable, that is, $(X_{2}, X_{1}) \laweq (X_1,X_2)$, then zero invariant correlation is equivalent to independence.

Let $\vert A \vert$ be the cardinality of a set $A \subseteq \R$; $|A|=\infty$ if $A$ is infinite. 
A random variable $X$ is {$n$-atomic} if  
$\vert\supp(X)\vert= n$;
that is, its distribution is supported on  $n$ distinct points. 
We use the term ``bi-atomic" in case $n=2$ and ``tri-atomic" in case $n=3$. 
These terms are applied to both  random variables and their distributions.
Theorem \ref{thm:ch:zero:ic} identifies the joint distribution of $(X,Y) \in \mathrm{IC}_0$ when $X$ and $Y$ are atomic random variables.

\begin{proposition}\label{cor:n:atomic}
Suppose $X$ is $m$-atomic and $Y$ is $n$-atomic with $2\le m\le n$.  Let $P=(p_{ij})_{n\times n}$ with $p_{ij}=\p(X=x_i,Y=y_j)$ for $i\in[m]$, $j\in [n]$ and $p_{ij}=0$ for $m<i\le n$, $j\in [n]$, $\mathbf p = P\bone_n$ and $\mathbf q = P\T\bone_n$.
Then the followings are equivalent.
\begin{enumerate}[(i)]
\item $(X,Y)\in \mathrm{IC}_0$.
\item $P+P^\top=\mathbf p\mathbf q^\top +\mathbf q\mathbf p^\top$.
\item $P= \mathbf p \mathbf q^\top + S$,
where $ S =(s_{ij})_{n\times n}$ satisfies:
(a) $p_{i}q_{j}+s_{ij}\ge 0$ for each $(i,j)$;
(b) each of its row sums and column sums is $0$; and
(c) $s_{ij}=-s_{ji}$ for each $(i,j)$, i.e., $S$ is anti-symmetric.
\end{enumerate}
\end{proposition}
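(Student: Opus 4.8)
The plan is to derive the equivalence by translating Theorem~\ref{thm:ch:zero:ic} into the finitely-supported setting and then performing elementary matrix bookkeeping. First I would observe that since $X$ and $Y$ are atomic, the joint law of $(X,Y)$ is entirely encoded by the matrix $P=(p_{ij})_{n\times n}$ (padded with zero rows for $m<i\le n$), and the marginals are recovered as $\mathbf p=P\bone_n$ and $\mathbf q=P\T\bone_n$. By Theorem~\ref{thm:ch:zero:ic}, $(X,Y)\in\mathrm{IC}_0$ is equivalent to quasi-independence, which by the set-form characterization~\eqref{eq:quasi:2} reads $\p(X\in A,Y\in B)+\p(X\in B,Y\in A)=\p(X\in A)\p(Y\in B)+\p(X\in B)\p(Y\in A)$ for all Borel $A,B$. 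For atomic random variables it suffices to check this on singletons $A=\{x_i\}$, $B=\{x_j\}$ (a standard atom-by-atom argument: both sides are finitely additive set functions determined by their values on atoms), and this is exactly the entrywise identity $p_{ij}+p_{ji}=p_iq_j+p_jq_i$, i.e.\ $P+P\T=\mathbf p\mathbf q\T+\mathbf q\mathbf p\T$. This gives (i)$\Leftrightarrow$(ii).

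For (ii)$\Leftrightarrow$(iii), I would set $S=P-\mathbf p\mathbf q\T$ and verify the three listed properties. Property (a), $p_iq_j+s_{ij}\ge 0$, is just $p_{ij}\ge 0$, which holds automatically since $P$ is a probability matrix; conversely, given $S$ satisfying (a)--(c), the matrix $P=\mathbf p\mathbf q\T+S$ has nonnegative entries by (a), and its entries sum to $(\bone_n\T\mathbf p)(\mathbf q\T\bone_n)+0=1$ by (b) together with $\bone_n\T\mathbf p=\bone_n\T\mathbf q=1$, so it is a valid joint law with the prescribed marginals (the row sums of $\mathbf p\mathbf q\T$ are $\mathbf p$ since $\mathbf q\T\bone_n=1$, and (b) leaves these unchanged; similarly for columns). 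Property (b), the vanishing of row and column sums of $S$, is equivalent to $S\bone_n=P\bone_n-\mathbf p(\mathbf q\T\bone_n)=\mathbf p-\mathbf p=\bzero_n$ and likewise $S\T\bone_n=\bzero_n$, which hold iff $\mathbf p$ and $\mathbf q$ are indeed the marginals of $P$ — true by construction — so (b) is free. The real content is property (c): $S$ anti-symmetric means $p_{ij}-p_iq_j=-(p_{ji}-p_jq_i)$, i.e.\ $p_{ij}+p_{ji}=p_iq_j+p_jq_i$, which is precisely condition (ii). So modulo the automatic facts (a)--(b), condition (iii) is a verbatim restatement of (ii).

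I would then assemble these observations: (i)$\Leftrightarrow$(ii) via Theorem~\ref{thm:ch:zero:ic} and the singleton reduction of~\eqref{eq:quasi:2}; (ii)$\Leftrightarrow$(iii) via the substitution $S=P-\mathbf p\mathbf q\T$ and the remarks above. I do not anticipate a serious obstacle here, since the heavy lifting is done by Theorem~\ref{thm:ch:zero:ic}; the one point requiring a little care is the reduction of quasi-independence to a condition on singletons. Here the subtlety is that $P$ is an $n\times n$ matrix but $X$ has only $m\le n$ atoms; I would note that the "extra" rows $m<i\le n$ are identically zero, hence contribute nothing and are consistent with $p_i=0$ there, so the identity $P+P\T=\mathbf p\mathbf q\T+\mathbf q\mathbf p\T$ restricted to the relevant $m\times n$ block captures exactly the quasi-independence relations among actual atoms, while the remaining entries hold trivially ($0+0=0+0$). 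A brief lemma-style justification that finitely-additive equalities over a finite atom set are determined by their singleton values closes the remaining gap, and the rest is the bookkeeping sketched above.
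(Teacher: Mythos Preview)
Your proposal is correct and follows essentially the same route as the paper: invoke Theorem~\ref{thm:ch:zero:ic} to reduce (i)$\Leftrightarrow$(ii) to the quasi-independence identity on atoms, and handle (ii)$\Leftrightarrow$(iii) by setting $S=P-\mathbf p\mathbf q^\top$ and checking (a)--(c) directly. Your write-up is in fact more explicit than the paper's on two points the paper leaves implicit---the singleton reduction of~\eqref{eq:quasi:2} and the role of the zero-padded rows---but the underlying argument is identical.
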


For the special case $m=n=3$ and $x_i=y_i$ for $i\in[3]$, we have an explicit representation for the probability matrix $P$ in Example \ref{cor:tri0}.

\begin{example}\label{cor:tri0}
 Assume that $X$ and $Y$ are tri-atomic random variables with identical supports. Then,  $(X,Y)\in \mathrm{IC}_0$ if and only if the probability matrix is given by $P=\mathbf p\mathbf q^\top+S$, where $S=(s_{ij})_{3\times 3}$ is an anti-symmetric matrix such that $s_{ii}=0$ for $i\in [3]$ and $s_{12}=s_{23}=s_{31}=\epsilon
 \in [-\min(p_1q_2,p_2q_3,p_3q_1), \min (p_1q_3,p_2q_1,p_3q_2)]$.
\end{example}

By taking $p_1\downarrow0$, we have  $\epsilon\to 0$ in Example \ref{cor:tri0}. Hence, it is expected that zero invariant correlation is characterized by independence if one of the marginals is bi-atomic.
This is formally stated in the next proposition. 

\begin{proposition}\label{pro:ic0:one_bi}
Let $X,Y\in \mathcal L^2$ where $X$ is bi-atomic. 
If $(X,Y)$ is quasi-independent, then $(X,Y)$ is independent.
In particular, 
$(X,Y) \in \mathrm{IC}_0$ if and only if $(X,Y)$ is independent.
\end{proposition}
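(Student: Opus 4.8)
The plan is to leverage Proposition~\ref{cor:n:atomic} (specifically the equivalence (i)$\iff$(iii)) applied with $m=2$, and then argue that the anti-symmetric correction matrix $S$ must vanish entirely. Concretely, suppose $X$ is bi-atomic with $\supp(X)=\{x_1,x_2\}$ and $Y$ is $n$-atomic with $\supp(Y)=\{y_1,\dots,y_n\}$ (we may assume $n\ge 2$ finite for this step; the infinite case is handled separately, see below). Write $P=(p_{ij})$ with $p_{ij}=\p(X=x_i,Y=y_j)$ for $i\in\{1,2\}$ and $p_{ij}=0$ for $i>2$. If $(X,Y)$ is quasi-independent, then by Proposition~\ref{cor:n:atomic} we have $P=\mathbf p\mathbf q^\top+S$ with $S$ anti-symmetric, having zero row and column sums, and with $p_iq_j+s_{ij}\ge 0$. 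The key observation is the \emph{shape constraint}: since rows $3,\dots,n$ of $P$ are identically zero, we get $s_{ij}=-p_iq_j$ for all $i\in\{3,\dots,n\}$, $j\in[n]$; but then anti-symmetry forces $s_{ji}=p_iq_j$ for $j\in\{3,\dots,n\}$, and feasibility $p_jq_i+s_{ji}\ge 0$ is automatic, so the real content is in the $2\times n$ block. Using anti-symmetry $s_{ij}=-s_{ji}$ together with $s_{ii}=0$ and the zero-sum conditions, I expect to pin down $S$ to have only finitely many free parameters and then show the sign/zero constraints force all of them to be $0$, i.e.\ $P=\mathbf p\mathbf q^\top$, which is exactly independence.

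In more detail on the combinatorial core: restrict attention to the support of $X$, so effectively $m=2$. The zero row sums on rows $1,2$ give $\sum_j s_{1j}=\sum_j s_{2j}=0$; anti-symmetry gives $s_{21}=-s_{12}$, $s_{11}=s_{22}=0$. For a column $j\ge 3$, the column sum is $s_{1j}+s_{2j}+\sum_{i\ge3}s_{ij}=0$; since we showed $s_{ij}=-p_iq_j$ for $i\ge 3$, this reads $s_{1j}+s_{2j}=q_j(p_3+\cdots+p_n)=0$ because $p_3=\cdots=p_n=0$. Hence $s_{2j}=-s_{1j}$ for all $j$. Combined with the row-sum $\sum_j s_{1j}=0$ and the already-known $s_{11}=0$, we get one linear relation among $s_{12},s_{13},\dots,s_{1n}$. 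The crucial non-negativity constraints are $p_1q_j+s_{1j}\ge 0$ and $p_2q_j+s_{2j}=p_2q_j-s_{1j}\ge 0$, i.e.\ $-p_1q_j\le s_{1j}\le p_2q_j$ for each $j$. The plan is to combine these bounds with the linear constraint to conclude $s_{1j}=0$ for every $j$. This is where I expect the one genuinely delicate argument: one has to rule out a "rotation" configuration where positive and negative $s_{1j}$'s cancel. I anticipate handling this by noting that the quasi-independence identity, when specialized to the pair of points, gives $p_{1j}+p_{j1}$-type symmetric relations that, together with $X$ taking only two values, leave no room for a nonzero anti-symmetric part — essentially because an anti-symmetric matrix supported on a $2\times n$ (plus transpose) pattern with zero row/column sums and the sign box constraints is forced to be zero. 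Alternatively, I may invoke Proposition~\ref{pro:ic0:independent}/random-rearrangement reasoning is \emph{not} available here since marginals differ, so the direct matrix argument is the cleanest route.

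For the infinite-support case (either $X$ bi-atomic but $Y$ with infinitely many atoms, or $Y$ non-atomic), I would reduce to the finite case by a partitioning/conditioning argument: apply the set-form~\eqref{eq:quasi:2} of quasi-independence with $A=\{x_1\}$, $B=\{x_2\}$ and arbitrary Borel $C$ in place of one of the sets, or more systematically partition $\R$ (the range of $Y$) into finitely many blocks, observe that quasi-independence is inherited by the induced finite model $(X, \text{block index of }Y)$, deduce independence of that finite model from the finite case just proved, and then let the partition refine to conclude $\p(X=x_i, Y\in D)=\p(X=x_i)\p(Y\in D)$ for all Borel $D$ — which is independence. The main obstacle is the combinatorial lemma about the $2\times n$ anti-symmetric block; once that is in hand, everything else is bookkeeping, and the final sentence of the proposition ("$(X,Y)\in\mathrm{IC}_0$ iff independent") follows immediately by combining this with Theorem~\ref{thm:ch:zero:ic} (which identifies $\mathrm{IC}_0$ with quasi-independence) and the trivial fact that independence implies $(X,Y)\in\mathrm{IC}_0$ (Example~\ref{ex:independent}).
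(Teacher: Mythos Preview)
Your approach via Proposition~\ref{cor:n:atomic} is valid but more circuitous than the paper's, and the ``delicate argument'' you anticipate is unnecessary. In your matrix setup you already note $p_i=0$ for $i\ge 3$, so in fact $s_{ij}=p_{ij}-p_iq_j=0$ for every $i\ge 3$ and every $j\in[n]$; anti-symmetry then gives $s_{ji}=-s_{ij}=0$ for the same indices, so $S$ vanishes outside the upper-left $2\times 2$ block. There $s_{11}=s_{22}=0$ by anti-symmetry, and the zero row-sum for row~$1$ forces $s_{12}=0$. Hence $S=O$ and $P=\mathbf p\mathbf q^\top$ without ever touching the non-negativity constraints; no ``rotation'' configuration can occur. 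Your partition-and-refine reduction for non-finite $Y$ is also correct.

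The paper takes a different and shorter route: assuming without loss of generality $\supp(X)=\{1,2\}$, it applies the set form~\eqref{eq:quasi:2} of quasi-independence directly and verifies $\p(X=1,Y\in A)=\p(X=1)\p(Y\in A)$ for an arbitrary Borel set $A$ by a four-case analysis on whether $1\in A$ and $2\in A$. This handles all $Y$---finite, countable, or continuous support---in a single argument, avoiding both the matrix formalism and the finite/infinite case split. Your route buys a more structural picture (the vanishing of the anti-symmetric correction $S$), while the paper's buys brevity and uniformity across support types.
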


At this point, we have fully characterized zero invariant correlation and its relation to independence. Next, we consider general invariant correlation $r$ for $r\in [-1,1]$.

\subsection{Invariant correlation with non-identical marginals}\label{sec:non:identical}

The main message of this section is that, for $X$ and $Y$ with different marginals,  having a non-zero invariant correlation 
is   impossible, except for the very special case of bi-atomic distributions.


First, suppose that $X$ and $Y$ are {bi-atomic} random variables.
The following result shows that $(X,Y) \in \mathrm{IC}$ if $X$ and $Y$ have the same support. In this case, the invariant correlation can   be any number in $[-1,1]$. In particular, we have that $\corr(X,Y)=1$ implies comonotonicity, $\corr(X,Y)=0$ implies independence, and $\corr(X,Y)=-1$ implies counter-monotonicity (i.e., $X$ and $-Y$ are comonotonic).
However, if $X$ and $Y$ have different supports, then independence is the only possible case of invariant correlation.

 \begin{proposition}\label{pro:bi-atomic}
  Let $X$ and $Y$ be bi-atomic random variables.
  Then $(X,Y) \in \mathrm{IC}$  if and only if either (i) $X$ and $Y$ have the same support, or (ii) $(X,Y)$ is independent.
 \end{proposition}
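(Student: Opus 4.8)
The plan is to analyze a bivariate random vector $(X,Y)$ where $X$ is supported on $\{a_1,a_2\}$ and $Y$ on $\{b_1,b_2\}$, and determine when invariant correlation can hold. The key simplification is that a bi-atomic random variable is an affine function of a Bernoulli: writing $X = a_1 + (a_2-a_1)U$ with $U = \id_{\{X=a_2\}}$ and similarly $Y = b_1 + (b_2-b_1)V$ with $V = \id_{\{Y=b_2\}}$, Proposition \ref{lem:h} (applied to suitable affine and then arbitrary transforms) lets me reduce to studying $(U,V)$, a pair of Bernoulli variables, together with the effect of an arbitrary admissible $g$. Since any $g$ acting on the two-point support of $X$ is determined by the pair $(g(a_1),g(a_2))$, the transformed variable $g(X)$ is again an affine function of the \emph{same} Bernoulli $U$; hence $\corr(g(X),g(Y)) = \pm \corr(U,V)$, with sign depending on whether $g$ preserves or reverses the order of the two atoms of $X$ and likewise for $Y$. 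Thus the "$(\Leftarrow)$ (i)" direction is essentially immediate: if the two supports coincide, say $a_i = b_i$, then the only admissible $g$'s one must compare are those using the \emph{same} ordering on both coordinates (a single function $g$ applied to both), so the sign is always $+$ and $\corr(g(X),g(Y)) = \corr(U,V)$ is constant. Combined with $\corr(U,V) = 1 \Leftrightarrow U = V$ a.s. $\Leftrightarrow$ comonotonicity, $= -1 \Leftrightarrow U = 1-V \Leftrightarrow$ counter-monotonicity, and $= 0 \Leftrightarrow$ independence of Bernoullis $\Leftrightarrow$ independence, this yields all the claimed consequences. The "$(\Leftarrow)$ (ii)" direction is Example \ref{ex:independent}.

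The substance is the "$(\Rightarrow)$" direction: if $(X,Y) \in \mathrm{IC}$ and the supports $\{a_1,a_2\}$, $\{b_1,b_2\}$ are \emph{not} identical, then $(X,Y)$ must be independent. Here is where I exploit that with distinct supports there genuinely exist admissible $g$ producing \emph{both} signs. Concretely, pick $g$ that is increasing on $\supp(X) \cup \supp(Y)$ — this maps the atoms of $X$ in their natural order and the atoms of $Y$ in their natural order, giving $\corr(g(X),g(Y)) = \corr(U,V)$. Now I want a second admissible $g'$ that reverses the order on exactly one of the two supports while leaving the other order intact; this is possible precisely because the supports are not identical, so I can interpolate a function that is, say, decreasing across $\{a_1,a_2\}$ but increasing across $\{b_1,b_2\}$ (or constant on one of the $b$'s relative to the $a$'s placement). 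Such a $g'$ gives $\corr(g'(X),g'(Y)) = -\corr(U,V)$. Invariant correlation forces $\corr(U,V) = -\corr(U,V)$, hence $\corr(U,V) = 0$, and for Bernoulli variables zero correlation is equivalent to independence (Proposition \ref{pro:ic0:one_bi}, or directly), so $(X,Y)$ is independent.

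The main obstacle is the careful case analysis in constructing the order-reversing $g'$: "the supports are not identical" splits into subcases — the two two-point sets can be disjoint, can overlap in exactly one point, or can share both points but be equal (excluded), and within "share one point" the shared point can be the larger atom of one and the smaller of the other, etc. In each configuration I must exhibit an explicit admissible (piecewise-linear, bounded-derivative) $g'$ on $\R$ whose restriction to $\{a_1,a_2\}$ is order-reversing while its restriction to $\{b_1,b_2\}$ is order-preserving (or vice versa), and check it is genuinely admissible, i.e. $g'(X), g'(Y) \in \mathcal L^2$ — automatic here since both are still bi-atomic or degenerate, but I should note that if $g'$ collapses a support to a point then $\corr$ is undefined, so I must avoid that, which is always possible when the supports differ. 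I would organize this as: first dispose of the degenerate possibilities, then treat "disjoint supports" (easy: choose $g'$ freely on the four distinct points), then "overlap in one point" (the only delicate case, one or two sub-diagrams), concluding in every case that $\corr(U,V) = 0$ and invoking the Bernoulli zero-correlation-implies-independence fact. I expect the write-up to be short modulo this bookkeeping.
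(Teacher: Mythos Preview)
Your proposal is correct and follows essentially the same route as the paper: for the ``if'' direction, same support implies every admissible $g$ acts as an affine map on the common two-point set (the paper isolates this as Lemma~\ref{lem:bernoulli}); for the ``only if'' direction, when the supports differ you construct a $g$ that preserves the order of one support and reverses the other, forcing $\corr(g(X),g(Y))=-\corr(X,Y)$ and hence $r=0$, then invoke Proposition~\ref{pro:ic0:one_bi} to upgrade zero correlation to independence. The only difference is cosmetic: the paper dispatches the existence of such a $g$ in one line (``we can easily find a function $g$ such that $g(x_1)<g(x_2)$ and $g(y_1)>g(y_2)$''), whereas you plan an explicit case split on how the two supports overlap---this bookkeeping is unnecessary, since with at most four points and at least three distinct values you can always prescribe $g$ freely on those values.
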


Note that case (ii) in Proposition~\ref{pro:bi-atomic} is equivalent to zero invariant correlation.
Therefore, for a pair of bi-atomic random variables, $\mathrm{IC}_r$ for $r\neq 0$ is characterized as the set of all models with identical marginal supports.

Next, we obtain the following observations by carefully investigating tri-atomic models; see Lemma~\ref{pro:tri} in~Appendix \ref{app:3.2} for details.

 \begin{enumerate}[(i)]
 \item For bi-atomic $X$ and tri-atomic $Y$ such that $\supp(X)\subsetneq\supp(Y)$, we have $(X,Y) \in \mathrm{IC}$ if and only if $(X,Y) \in \mathrm{IC}_0$.
 \item Let $X$ and $Y$ be tri-atomic random variables with the same support. 
 If $(X,Y) \in \mathrm{IC}$, then   $F_X=F_Y$ or  $\corr(X,Y)=0$.  
 \end{enumerate}
If $X$ and $Y$ are general random variables with different distributions, then we can take $h$ in Proposition~\ref{lem:h} to transform $X$ and $Y$ into bi-atomic or tri-atomic random variables, so that we can apply Proposition \ref{pro:bi-atomic} and the above observations (i) and (ii). 
This argument leads to the main result of this section.  

 \begin{theorem}\label{thm:main:sec:3}
Suppose that $X ,Y \in \mathcal L^2$ have different distributions and $|\supp(Y)|>2$.
Then
 $(X,Y) \in \mathrm{IC}$  if and only 
 $(X,Y) \in \mathrm{IC}_0$ (which is characterized in Theorem \ref{thm:ch:zero:ic}).
\end{theorem}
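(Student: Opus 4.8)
The plan is to reduce the general statement to the atomic cases already discussed. The ``if'' direction is immediate: if $(X,Y)\in\mathrm{IC}_0$ then $(X,Y)\in\mathrm{IC}$ by definition. For the ``only if'' direction, suppose $(X,Y)\in\mathrm{IC}_r$ for some $r\in[-1,1]$; I want to show $r=0$. Since the marginals $F_X$ and $F_Y$ differ, there exists a Borel set on which their masses disagree, and hence there exists a point $t\in\R$ with $F_X(t)\neq F_Y(t)$. Similarly, since $|\supp(Y)|>2$, the distribution of $Y$ puts positive mass on (or at least is non-degenerate across) at least three distinct ``regions''. The idea is to choose a measurable $h:\R\to\R$, constant on finitely many pieces, so that $h(X)$ and $h(Y)$ become bi-atomic or tri-atomic with controlled supports, then invoke Proposition~\ref{lem:h} together with Proposition~\ref{pro:bi-atomic} and the observations (i)--(ii) preceding the theorem.

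Concretely, first I would handle the case where collapsing to two atoms already works: pick a threshold $t$ with $F_X(t)\neq F_Y(t)$ and set $h=\id_{(t,\infty)}$. Then $h(X)$ and $h(Y)$ are both $\{0,1\}$-valued but with different distributions (different means), so they are bi-atomic with the ``same support'' $\{0,1\}$ only in the degenerate labelling sense; Proposition~\ref{pro:bi-atomic} then forces $(h(X),h(Y))$ to be independent, i.e.\ $\corr(h(X),h(Y))=0$, hence $r=0$ by invariance. The subtlety is that $h(X)$ or $h(Y)$ could be degenerate (if $t$ lies outside a support), so $t$ must be chosen in the interior of the relevant range; such a $t$ exists precisely because $F_X\neq F_Y$ forces a disagreement at a point where both are strictly between $0$ and $1$, or else one can first split off mass using the three-point structure of $Y$. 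When the naive binary split makes one variable degenerate, I would instead use a three-valued $h$ (thresholds $t_1<t_2$ chosen from the at-least-three support points of $Y$ and from a disagreement point of the marginals) so that $(h(X),h(Y))$ is a pair of tri-atomic (or mixed bi/tri-atomic) variables with a genuine marginal discrepancy; then observation~(i) or~(ii) applies and again yields $\corr(h(X),h(Y))=0$, so $r=0$. Finally, once $r=0$ is established, Theorem~\ref{thm:ch:zero:ic} identifies $(X,Y)\in\mathrm{IC}_0$ with quasi-independence, completing the characterization.

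The main obstacle will be the bookkeeping of the reduction: I must verify that for \emph{every} pair $(F_X,F_Y)$ with $F_X\neq F_Y$ and $|\supp(Y)|>2$, at least one admissible piecewise-constant $h$ produces an atomic pair to which Proposition~\ref{pro:bi-atomic} or Lemma~\ref{pro:tri} applies with the hypotheses genuinely satisfied — in particular that $h$ does not accidentally equalize the transformed marginals or degenerate them. This amounts to a short case analysis: (a) if there is a threshold $t$ with $F_X(t)\neq F_Y(t)$ and $0<F_X(t),F_Y(t)<1$, the binary reduction suffices; (b) otherwise every disagreement of $F_X$ and $F_Y$ occurs ``at the boundary'', and one uses the third support point of $Y$ to build a tri-atomic reduction where the marginals still differ. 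I expect (b) to require the most care, since one must argue that a disagreement point can always be combined with a support point of $Y$ to keep the reduced marginals distinct; this is where Lemma~\ref{pro:tri} (and the fact that correlation is permutation-symmetric, forcing the analysis through quasi-independence rather than independence) does the real work. All the probabilistic content is already packaged in the cited results, so beyond this combinatorial reduction the proof is routine.
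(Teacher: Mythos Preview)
Your reduction strategy is the right idea and matches the paper's approach in spirit, but your case~(a) contains a genuine error. When you set $h=\id_{(t,\infty)}$ with $0<F_X(t),F_Y(t)<1$, both $h(X)$ and $h(Y)$ are bi-atomic with the \emph{same} support $\{0,1\}$. Proposition~\ref{pro:bi-atomic} then says precisely that $(h(X),h(Y))\in\mathrm{IC}$ automatically, for \emph{any} correlation $r$ --- this is case~(i) of that proposition, not case~(ii). So the binary threshold reduction yields no information about $r$; it does not force independence, and your claim that ``the binary reduction suffices'' is false. The paper exploits Proposition~\ref{pro:bi-atomic} only when the supports of $X$ and $Y$ are incomparable, in which case one can build $g$ so that $g(X)$ and $g(Y)$ are bi-atomic with \emph{different} supports, and then case~(ii) applies.

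The repair is to abandon binary reductions entirely (or reserve them for the incomparable-support subcase) and go straight to a tri-atomic reduction, which is essentially what the paper does. When $\supp(X)=\supp(Y)$, pick a set $A$ with $\p(X\in A)\neq\p(Y\in A)$ and two further disjoint pieces $B$ and $S\setminus(A\cup B)$, all with positive mass under both marginals; then $g=\id_A+2\id_B+3\id_{S\setminus(A\cup B)}$ gives tri-atomic $g(X),g(Y)$ with the same support but different distributions, and Lemma~\ref{pro:tri} forces $r=0$. When the supports differ, either map to bi-atomic with genuinely different supports, or to a bi-/tri-atomic pair and invoke observation~(i). Your ``bookkeeping'' paragraph already anticipates some of this, but you must recognise that the obstacle is not degeneracy of $h(X)$ or $h(Y)$ --- it is that same-support bi-atomic pairs are always in $\mathrm{IC}$, so any indicator-type reduction that lands both variables on the same two atoms is useless.
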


Theorem \ref{thm:main:sec:3} suggests that the case of $\mathrm{IC}_r$, $r\ne 0$ is very limited. 
In particular, the only possibility for having different marginals in $(X,Y)\in \mathrm{IC}_r$, $r\ne 0$, is that $X$ and $Y$ are both bi-atomic.   
 Together with Proposition~\ref{pro:bi-atomic}, Theorem \ref{thm:main:sec:3} characterizes all $(X,Y)\in \mathrm{IC}$ with different marginals. 
 

\subsection{Invariant correlation with identical marginals}\label{sec:identical}

In this section we explore the remaining case of $(X,Y) \in \mathrm{IC}$ where $F_X=F_Y$.
We start with $n$-atomic random variables $X$ and $Y$. Assume that $X$ and $Y$ take values in $\mathcal X=\{x_1,\dots,x_n\}$, $n\in \N$, with $x_1<\cdots<x_n$. Define the $n\times n$ probability matrix $P=(p_{ij})_{n \times n}$ of $(X,Y)$ as $p_{ij}=\p(X=x_i,Y=x_j)$.
Let $\bp=P\bone_n$ be the marginal probability of $X$ and $Y$.
Let $D=\diag(\bp)$.
For $\bx=(x_1,\dots,x_n)^\top$, we have
$
\E[X]=\E[Y]=\bx\T \bp$,
$\E[X^2]=\E[Y^2]=\bx\T D \bx$,
$\E[XY]= \bx\T P\bx$, and thus the Pearson correlation coefficient of $(X,Y)$ is given by
$$
\corr(X,Y)=\frac{\bx\T P\bx - \bx\T \bp \bp\T \bx}{
\sqrt{\bx\T D\bx  - \bx\T \bp\bp\T\bx}
\sqrt{\bx\T D \by  - \bx\T \bp \bp\T\bx}}=\frac{\bx\T(P-\bp\bp\T)\bx}{\bx\T(D-\bp\bp\T)\bx}.
$$



\begin{proposition}\label{prop:ic:finite:identical}
Let $X$ and $Y$ be identically distributed $n$-atomic random variables.
Then $(X,Y)\in \mathrm{IC}_r$ for some $r \in[-1,1]$ if and only if $(X,Y)$ is quasi-$r$-Fr\'echet, that is, the probability matrix $P$ satisfies
\begin{align}\label{eq:ic:finite:mixture:m:pi}
\frac{P +P\T}{2} =  r D + (1-r)\bp\bp\T.
\end{align}
Furthermore, the invariant correlation $r$ satisfies
\begin{align}\label{eq:range:invariant:correlation}
\underline r_\bp\leq r \leq 1,\quad\text{where}\quad
\underline r_\bp = \max\left(
\max_{j\in [n]}\left(-\frac{p_j}{1-p_j}\right),
\max_{ i,j \in [n],i\neq j}
\left(1-\frac{1}{p_ip_j}\right)
\right).
\end{align}
\end{proposition}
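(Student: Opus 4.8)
The plan is to work entirely at the level of the probability matrix $P$, proving the two implications of the equivalence separately and then reading off the range of $r$ from nonnegativity of the entries of $P$.

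For the ``if'' direction, assume $\tfrac12(P+P\T)=rD+(1-r)\bp\bp\T$ and take an arbitrary admissible $g$. Restricting $g$ to $\mathcal X=\{x_1,\dots,x_n\}$, let $z_1,\dots,z_m$ be the distinct values in $g(\mathcal X)$ and let $T\in\{0,1\}^{m\times n}$ be the incidence matrix $T_{ki}=\id\{g(x_i)=z_k\}$, so that $T\T\bone_m=\bone_n$. Then $(g(X),g(Y))$ is $m$-atomic with identical marginals, probability matrix $P'=TPT\T$, marginal vector $\bp'=T\bp$ and $D'=\diag(\bp')$. I would first check the three bookkeeping identities $TDT\T=\diag(T\bp)=D'$, $T\bp\bp\T T\T=\bp'\bp'^\top$ and $(TPT\T)\T=TP\T T\T$; multiplying the quasi-$r$-Fr\'echet identity on the left by $T$ and on the right by $T\T$ then shows that $\tfrac12(P'+P'^\top)=rD'+(1-r)\bp'\bp'^\top$, i.e.\ the quasi-$r$-Fr\'echet structure is stable under marginal transforms. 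Writing $\bz=(z_1,\dots,z_m)\T$ and using that $g(X)$ is non-degenerate (so the denominator in the correlation formula of Section~\ref{sec:identical} is positive), the scalar $\bz\T P'\bz$ equals $\bz\T\tfrac12(P'+P'^\top)\bz=r\,\bz\T D'\bz+(1-r)(\bz\T\bp')^2$; substituting this into $\corr(g(X),g(Y))=\bigl(\bz\T P'\bz-(\bz\T\bp')^2\bigr)/\bigl(\bz\T D'\bz-(\bz\T\bp')^2\bigr)$ makes the numerator exactly $r$ times the denominator, so $\corr(g(X),g(Y))=r$. Taking $g=\mathrm{id}$ gives $\corr(X,Y)=r$ as well, hence $(X,Y)\in\mathrm{IC}_r$.

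For the ``only if'' direction, assume $(X,Y)\in\mathrm{IC}_r$ and test \eqref{eq:inv:corr} only on indicator transforms: for $A\subseteq\mathcal X$ with $0<p_A:=\sum_{i\in A}p_i<1$ the map $g=\id_A$ is admissible, $g(X),g(Y)\sim\Bern(p_A)$, and $\corr(g(X),g(Y))=r$ rearranges to $\p(X\in A,Y\in A)=rp_A+(1-r)p_A^2$, which also holds trivially when $p_A\in\{0,1\}$. Setting $\nu(A,B):=\p(X\in A,Y\in B)+\p(X\in B,Y\in A)$ and $\rho(A,B):=2\bigl(r\,p_{A\cap B}+(1-r)p_Ap_B\bigr)$, both are symmetric and additive in each argument over disjoint unions, and we have just shown $\nu(A,A)=\rho(A,A)$. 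A polarization step then finishes it: for disjoint $A,B$, $\nu(A,B)=\tfrac12[\nu(A\cup B,A\cup B)-\nu(A,A)-\nu(B,B)]=\rho(A,B)$ by bi-additivity and symmetry, and for general $A,B$ one splits into the three disjoint pieces $A\setminus B$, $B\setminus A$, $A\cap B$ and expands by bi-additivity to reduce to the disjoint/diagonal cases, giving $\nu=\rho$ on all pairs. Taking $A=\{x_i\}$, $B=\{x_j\}$ yields $\tfrac12(p_{ij}+p_{ji})=r\id\{i=j\}p_i+(1-r)p_ip_j$, i.e.\ $\tfrac12(P+P\T)=rD+(1-r)\bp\bp\T$. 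Combined with \eqref{eq:identical:identity:set}, this is exactly quasi-$r$-Fr\'echet.

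For the range, note $n\ge 2$ and $p_i\in(0,1)$ for all $i$. From the diagonal of the established identity, $p_{ii}=rp_i+(1-r)p_i^2\ge 0$ forces $r\ge -p_i/(1-p_i)$ for each $i$; from the off-diagonal entries, $(1-r)p_ip_j=\tfrac12(p_{ij}+p_{ji})\le 1$ (as $p_{ij},p_{ji}\le 1$) forces $r\ge 1-1/(p_ip_j)$ for each $i\ne j$; and $r\le 1$ since correlation is bounded by $1$. Taking the maximum of all these lower bounds gives $\underline r_{\bp}\le r\le 1$. The step I expect to require the most care is the ``if'' direction, specifically verifying that the quasi-$r$-Fr\'echet identity is preserved under arbitrary (in particular non-injective) marginal transforms via the incidence-matrix computation; once that stability is in place, the correlation identity, the indicator/polarization argument, and the range are all routine.
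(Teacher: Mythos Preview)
Your proof is correct. The main departure from the paper is in the ``only if'' direction: the paper observes that invariant correlation means $\bz\T(P-rD-(1-r)\bp\bp\T)\bz=0$ for every $\bz\in\R^n\setminus\{c\bone_n:c\in\R\}$, and a quadratic form vanishing identically forces the symmetric part of its matrix to be zero, which yields~\eqref{eq:ic:finite:mixture:m:pi} in one stroke. Your indicator-plus-polarization route reaches the same conclusion more combinatorially, in the spirit of the paper's proof of Theorem~\ref{thm:ch:zero:ic}; it is equally valid and has the mild advantage of extending verbatim to infinite or continuous supports, whereas the quadratic-form argument is tied to finite $n$. For the ``if'' direction, the step you flagged as requiring the most care is in fact the easiest: the paper simply lets $\bz=(g(x_1),\dots,g(x_n))$ have repeated entries and uses $\bz\T P\bz=\bz\T\tfrac12(P+P\T)\bz$ directly, so your incidence-matrix bookkeeping and the stability-under-transforms detour are correct but entirely avoidable. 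The range argument is essentially the same in both proofs.
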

If $(X,Y)$  in Proposition \ref{prop:ic:finite:identical} is exchangeable, then $P\T=P$ in \eqref{eq:ic:finite:mixture:m:pi}, which yields  $P=  r D + (1-r)\bp\bp\T$. That is, $P$ is $r$-Fr\'echet as defined in \eqref{eq:r:frechet}.
\begin{remark}\label{rem:range:r}
We show that $-1/(n-1) \leq \underline r_\bp < 0$ for any $\bp$.
The upper bound for $\underline r_\bp$  is trivial since all terms in $\underline r_\bp$ are negative.
Since
\begin{align}\label{eq:rp:term}
\max_{j\in [n]}\left(-\frac{p_j}{1-p_j}\right) = - \left(\frac{1}{\min(p_1,\dots,p_n)}-1\right)^{-1},
\end{align}
we have $\underline r_\bp \uparrow 0$ if $\min(p_1,\dots,p_n) \downarrow 0$.
To show the lower bound for $\underline r_\bp$, notice that the minimum of~\eqref{eq:rp:term} is $-1/(n-1)$ when $\min(p_1,\dots,p_n)=1/n$, and this is attained if and only if $p_i=1/n$, $i\in [n]$.
Consequently, we have the lower bound $-1/(n-1)$.
Note that, for $\bp\neq (1/n)\mathbf{1}_n$,~\eqref{eq:rp:term} is strictly greater than $-1/(n-1)$ and thus $\underline r_\bp>-1/(n-1)$.
\end{remark}





We next present the main result of this section, which covers the case when $X$ and $Y$ have identical marginals. 

\begin{theorem}\label{thm:main:sec:4}
Let $X, Y\in \mathcal L^2$ be identically distributed.
Then $(X,Y) \in \mathrm{IC}_r$ if and only if $(X,Y)$ is quasi-$r$-Fr\'echet.
\end{theorem}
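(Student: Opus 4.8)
The plan is to deduce Theorem~\ref{thm:main:sec:4} from the finitely-supported case already settled in Proposition~\ref{prop:ic:finite:identical}, treating each implication by discretizing $(X,Y)$ with a common step function and invoking the stability of invariant correlation under a common transform (Proposition~\ref{lem:h}). Write $F$ for the common marginal of $X$ and $Y$ and $H$ for their joint distribution function; recall from the discussion preceding the statement that being quasi-$r$-Fr\'echet, i.e.\ \eqref{eq:identical:identity}, is equivalent to its set form \eqref{eq:identical:identity:set} (Lemma~\ref{lem:quasi_Fre}), that under $\mu_{+}$ the two coordinates are equal almost surely with marginal $F$ (comonotone pair with identical marginals), and that $\mu_{\perp}$ makes them independent with marginal $F$.

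For the ``only if'' direction, assume $(X,Y)\in\mathrm{IC}_r$. Fix $a,b\in\R$; since \eqref{eq:identical:identity} is symmetric in $(a,b)$ we may take $a\le b$. Choose a finite grid $-\infty=t_0<t_1<\dots<t_{k-1}<t_k=+\infty$ containing $a$ and $b$, and also containing some point $c$ with $0<F(c)<1$ (such $c$ exists because $F$ is non-degenerate); let $g$ be the bounded measurable step function equal to $i$ on $(t_{i-1},t_i]$. Then $g$ is admissible for $(X,Y)$, and $g(X),g(Y)$ are identically distributed and non-degenerate (the cut at $c$ guarantees non-degeneracy), so Proposition~\ref{lem:h} gives $(g(X),g(Y))\in\mathrm{IC}_r$ and Proposition~\ref{prop:ic:finite:identical} yields the matrix identity \eqref{eq:ic:finite:mixture:m:pi} for the probability matrix $P=(p_{ij})$ and marginal vector $\bp$ of $(g(X),g(Y))$. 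Writing $a=t_p$, $b=t_q$ with $p\le q$ and summing $\tfrac12(p_{ij}+p_{ji})=r[\diag(\bp)]_{ij}+(1-r)\bp_i\bp_j$ over $i\le p$, $j\le q$ yields
\[
\frac{H(a,b)+H(b,a)}{2}=r\,F(t_p)+(1-r)F(t_p)F(t_q)=r\min(F(a),F(b))+(1-r)F(a)F(b).
\]
As $a,b$ are arbitrary, $(X,Y)$ is quasi-$r$-Fr\'echet.

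For the ``if'' direction, assume $(X,Y)$ is quasi-$r$-Fr\'echet, so \eqref{eq:identical:identity:set} holds. Since the measurable rectangles form a $\pi$-system generating $\mathcal B(\R^2)$, uniqueness of finite measures on a generating $\pi$-system upgrades \eqref{eq:identical:identity:set} to: for every Borel $C\subseteq\R^2$, $\tfrac12\mu(C)+\tfrac12\mu(C^{\mathrm t})=r\mu_{+}(C)+(1-r)\mu_{\perp}(C)$, where $C^{\mathrm t}=\{(y,x):(x,y)\in C\}$; hence $\int h\,\d\mu=r\int h\,\d\mu_{+}+(1-r)\int h\,\d\mu_{\perp}$ for every symmetric kernel $h$ integrable with respect to the three (finite) measures. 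Applying this with $h(x,y)=g(x)g(y)$ for an admissible $g$ (integrable against each measure because $g(X),g(Y)\in\mathcal L^2$), and using that the coordinates are equal under $\mu_{+}$ and independent with marginal $F$ under $\mu_{\perp}$, gives
\[
\E[g(X)g(Y)]=r\,\E[g(X)^2]+(1-r)\,\E[g(X)]^2 .
\]
Since $\var(g(X))=\var(g(Y))>0$, this is $\corr(g(X),g(Y))=(\E[g(X)g(Y)]-\E[g(X)]^2)/\var(g(X))=r$; taking $g(x)=x$ gives $\corr(X,Y)=r$, so $(X,Y)\in\mathrm{IC}_r$. (Alternatively, the ``if'' direction could be run through the same discretization: \eqref{eq:identical:identity:set} applied to the sets $g^{-1}(\{a_i\})$ shows that every finite-range transform of $(X,Y)$ is quasi-$r$-Fr\'echet, hence has correlation $r$ by Proposition~\ref{prop:ic:finite:identical}, and one passes to general admissible $g$ by an $\mathcal L^2$-approximation.)

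Given Proposition~\ref{prop:ic:finite:identical}, the remaining content is modest, and the two places needing care are: (i) ensuring the discretized pair is non-degenerate so that Proposition~\ref{prop:ic:finite:identical} genuinely applies, which is why a cut point $c$ with $F(c)\in(0,1)$ is inserted; and (ii) in the converse, the passage from the rectangle identity to an identity of (signed) measures and the interchange of the $r$-combination with integration of the possibly unbounded kernel $g(x)g(y)$, justified by finiteness of $\mu,\mu_{+},\mu_{\perp}$ and the $\mathcal L^2$-hypothesis. I expect (ii), though routine, to be the main point to get right, since it is the only step that is not reducible to the finite-dimensional computation.
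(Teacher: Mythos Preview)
Your proposal is correct. The necessity argument---discretize via a common step function, invoke Proposition~\ref{lem:h} and then Proposition~\ref{prop:ic:finite:identical}, and sum the resulting matrix identity over a lower-left block to recover~\eqref{eq:identical:identity}---is exactly the paper's own strategy; the only cosmetic difference is that the paper works with a fixed three-point grid while you allow an arbitrary finite grid and explicitly insert a point $c$ with $F(c)\in(0,1)$ to guarantee non-degeneracy of $g(X)$.

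For sufficiency the two routes diverge in technique. The paper first observes that the set form~\eqref{eq:identical:identity:set} transfers to $(g(X),g(Y))$ by pulling back along $g$, so the transformed pair is again quasi-$r$-Fr\'echet with marginal $F_g$, and then reads off $\cov(g(X),g(Y))=r\var(g(X))$ from the Hoeffding covariance identity $\cov(U,V)=\iint(H_{U,V}-F_UF_V)\,\d x\,\d y$. You instead upgrade the rectangle identity to all Borel sets via $\pi$--$\lambda$ and integrate the symmetric kernel $g(x)g(y)$ directly against the resulting equality of (finite, possibly signed) measures. Both are valid; your route is a bit more self-contained since it avoids importing Hoeffding's lemma, while the paper's route has the side benefit of showing that the quasi-$r$-Fr\'echet property itself is preserved under common marginal transforms. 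The integrability in your step~(ii) is indeed the only place needing care, and it goes through as you indicate: $|g(x)g(y)|$ is $\mu$-integrable by Cauchy--Schwarz and $g(X),g(Y)\in\mathcal L^2$, and is $\mu_+$- and $\mu_\perp$-integrable since $\mu_+$ is the law of $(X',X')$ and $\mu_\perp$ that of independent copies, both with marginal $F$.
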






Theorem \ref{thm:main:sec:4} highlights the special role played by  quasi-Fr\'echet dependence. 
As seen in Proposition \ref{prop:ic:finite:identical} and Remark~\ref{rem:range:r}, the invariant correlation $r$
can be negative when the support of $F$ is finite.
On the other hand, we will see that $r\ge 0$ if $X\sim F$ is continuous or the support of $X$ is countable but infinite.

Next, we consider the case when the support of $X$ and $Y$ is discrete, that is, $\mathcal X=\{x_i: i \in \mathcal I\}$, $\mathcal{I} \subseteq \N$.
Write $\p(X=x_i,Y=x_j)=p_{ij}>0$, $i ,\,j\in \mathcal I$, and $\p(X=x_i)=p_i>0$, $i\in \mathcal I$.
In this special case, Theorem~\ref{thm:main:sec:4} can be stated in the following form, which is more general than  Proposition \ref{prop:ic:finite:identical} since $|\mathcal X|=\infty$ is allowed.
For identically distributed discrete random variables $X$ and $Y$, we have
 $(X,Y)\in\mathrm{IC}_r$ for some $r\in [-1,1]$ if and only if
\begin{align}\label{eq:ic:infinite:mixture:m:pi}
\frac{p_{ij} +p_{ji}}{2} =  r p_i \id_{\{i=j\}} + (1-r)p_i\,p_j\quad \text{for all }i,j \in{\mathcal I},
\end{align} 
with $\underline r_\bp \leq r \leq 1$, where
\begin{align*}
\underline r_\bp = \max\left\{
\sup_{j\in \mathcal I}\left(-\frac{p_j}{1-p_j}\right),
\sup_{ i,j \in \mathcal I,i\neq j}
\left(1-\frac{1}{p_ip_j}\right)
\right\}.
\end{align*}
In particular, if  $|\mathcal X|=\infty$, we have $\inf_{i \in \mathcal I}p_i=0$, and hence $\underline r_{\bp}=0$ and $0\le r \le 1$.

Next, we consider the case when $F_X=F_Y=F$ for a continuous and strictly increasing distribution $F$.
Let $(U,V)=(F(X),F(Y))\sim C$ be the unique copula of $(X,Y)$.
By using Corollary~\ref{cor:copula}, Theorem~\ref{thm:main:sec:4} can be stated as follows.

\begin{corollary}\label{cor:ic:cont:identical}
Suppose that $X$ and $Y$ have identical continuous and strictly increasing marginals. 
Then the following are equivalent:
\begin{enumerate}[(i)]
\item $(X,Y)\in \mathrm{IC}_r$;
\item the copula $C$ of $(X,Y)$ satisfies\begin{align}\label{eq:ic:cont:mixture:m:pi}
\frac{C(u,v)+C(v,u)}{2}=r M(u,v) + (1-r) \Pi(u,v)\quad\mbox{for all $(u,v)\in\I^2$};
\end{align}
\item $r \in[0,1]$ and the positive Fr\'echet copula $C_r^{\operatorname{F}}$ is the copula of the random rearrangement of $(X,Y)$.
\end{enumerate}
\end{corollary}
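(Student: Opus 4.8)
The plan is to derive Corollary~\ref{cor:ic:cont:identical} as a direct specialization of Theorem~\ref{thm:main:sec:4} together with Corollary~\ref{cor:copula}. First I would observe that, since $F$ is continuous and strictly increasing, Corollary~\ref{cor:copula} tells us that $(X,Y)\in\mathrm{IC}_r$ if and only if the copula $C$, viewed as the distribution of $(U,V)=(F(X),F(Y))$, lies in $\mathrm{IC}_r$. Since $U,V$ are standard uniform and in particular identically distributed, Theorem~\ref{thm:main:sec:4} applies to $(U,V)$: it belongs to $\mathrm{IC}_r$ if and only if it is quasi-$r$-Fr\'echet. Writing out Definition~\ref{def:quasi:frechet} with marginal $F_U(u)=u$ on $[0,1]$ gives exactly
\[
\frac{C(u,v)+C(v,u)}{2}=r\min(u,v)+(1-r)uv = rM(u,v)+(1-r)\Pi(u,v),\qquad (u,v)\in\I^2,
\]
which is condition (ii). This establishes the equivalence (i)$\iff$(ii).

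For (ii)$\iff$(iii), I would first recall from the discussion preceding Corollary~\ref{cor:copula} that a bivariate vector has copula $C_r^{\operatorname F}$ if and only if the stochastic representation \eqref{eq:3a} (with $d=2$) holds almost surely for some event $A$ with $\p(A)=r$; in particular this forces $r\in[0,1]$. Now let $(\pi_1,\pi_2)$ be uniform on $\{(1,2),(2,1)\}$ and independent of $(U,V)$, and consider the random rearrangement $(U_{\pi_1},U_{\pi_2})$ of $(U,V)$. Since $U,V$ are standard uniform, so is each coordinate of the rearrangement, and its copula is precisely its joint distribution function, namely $\tfrac12(C(u,v)+C(v,u))$. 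Hence the left-hand side of (ii) is exactly the copula of the random rearrangement, so (ii) says this copula equals $rM+(1-r)\Pi=C_r^{\operatorname F}$ for some $r$; as just noted, any such identity forces $r\in[0,1]$, which is condition (iii). Conversely, if the rearrangement has copula $C_r^{\operatorname F}$ with $r\in[0,1]$, then equating that copula with $\tfrac12(C(u,v)+C(v,u))$ gives (ii) back. Throughout, one should note that $\corr$ is invariant under the rearrangement (it symmetrizes $(U,V)$ and correlation is already symmetric), so the value $r$ appearing in (iii) is consistent with the $r$ in (i)–(ii).

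I do not anticipate a serious obstacle here, since the corollary is essentially a translation of Theorem~\ref{thm:main:sec:4} through the copula. The one point requiring a little care is the bookkeeping around the rearrangement in the (ii)$\iff$(iii) step: one must check that the copula of $(U_{\pi_1},U_{\pi_2})$ is genuinely $\tfrac12(C(u,v)+C(v,u))$ (immediate by conditioning on $(\pi_1,\pi_2)$ and using uniqueness of the copula for continuous marginals), and that the constraint $r\ge 0$ emerges automatically because $C_r^{\operatorname F}$ is only a valid copula — equivalently, \eqref{eq:3a} only admits an event $A$ of probability $r$ — when $r\in[0,1]$. This is where the asymmetry with Proposition~\ref{prop:ic:finite:identical} (where $r$ could be negative) disappears: in the continuous case $\underline r_{\bp}=0$, as already recorded in the text preceding the corollary, so no negative invariant correlation survives. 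Assembling these observations yields the three-way equivalence.
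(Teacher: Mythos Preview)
Your route through Corollary~\ref{cor:copula} and Theorem~\ref{thm:main:sec:4} for (i)$\iff$(ii), and through the random rearrangement for (ii)$\iff$(iii), is correct and matches the paper's intended argument; indeed the paper's proof opens with ``It suffices to show that the range of the admissible invariant correlation is $0\le r\le 1$,'' taking everything else as immediate from Theorem~\ref{thm:main:sec:4}.

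The one place you are too quick is precisely the step the paper singles out: showing $r\ge 0$. You argue that $r\in[0,1]$ is forced because ``$C_r^{\operatorname F}$ is only a valid copula --- equivalently, \eqref{eq:3a} only admits an event $A$ of probability $r$ --- when $r\in[0,1]$.'' But the stochastic representation~\eqref{eq:3a} and the notation $C_r^{\operatorname F}$ are introduced in the paper \emph{for} $r\in[0,1]$, so invoking them to conclude $r\in[0,1]$ is circular. What is actually needed is the bare claim that the function $rM+(1-r)\Pi$ is $2$-increasing only if $r\ge 0$, and the paper proves exactly this: for a box $[\underline a,\overline a]\times[\underline b,\overline b]$ with $\underline a\le\underline b\le\overline b\le\overline a$, the $C$-volume of $rM+(1-r)\Pi$ equals $(\overline b-\underline b)\{r+(1-r)(\overline a-\underline a)\}\ge 0$, and letting $\overline a-\underline a\downarrow 0$ forces $r\ge 0$. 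Your appeal to ``$\underline r_{\bp}=0$ in the continuous case'' is only a heuristic analogy with the infinite discrete setting and does not replace this computation. Adding this short volume argument closes the gap, and your proof then coincides with the paper's.
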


For any given exchangeable copula $D$, we can generalized  \eqref{eq:ic:cont:mixture:m:pi}   to the following property of  a copula $C$: 
$(C(u,v)+C(v,u))/2=D(u,v)$ for $(u,v)\in [0,1]^2$.
In this way, $C$ has similar properties to $D$, but is not necessarily exchangeable. This idea underlines   quasi-independence with $D=\Pi$ and quasi-$r$-Fr\'echet copulas with $D=rM+(1-r)\Pi$.

Unlike the case of atomic random variables, negative invariant correlation is not possible when the support of $X$ and $Y$ is uncountable. 
By Corollary~\ref{cor:ic:cont:identical}, the positive Fr\'echet copula $C_r^{\operatorname{F}}$ is characterized as an exchangeable copula with invariant correlation $r$.
Non-exchangeable copulas satisfying~\eqref{eq:ic:cont:mixture:m:pi} are constructed in the next example.

\begin{example}
For $r \in \I$, let $P$ be a $3\times 3$ stochastic matrix such that $(P+P^\top)/2=(1/9)\bone_3\bone_3\T$.
Let $C_P$ be the checkerboard copula (see Section~4.1.1~of \cite{DS16}) associated with $P$ with the interior of each grid filled by the independence copula.
Let $C=rM + (1-r)C_P$.
Then it is straightforward to check that the copula $C$ satisfies~\eqref{eq:ic:cont:mixture:m:pi}.
On the other hand, $C$ may not be a Fr{\'e}chet copula since $C_P$ is not exchangeable when $P$ is not symmetric.
\end{example}

\section{Invariant correlation  matrices}\label{sec:matrix}

In this section, we study   invariant correlation matrices as the multivariate extension of invariant correlation. 

\subsection{Characterization of invariant correlation matrices}\label{sec:cha:ICM}

Recall that a $d$-dimensional random vector $\bX$ has an {invariant correlation matrix} $R=(r_{ij})_{d\times d}$
if every pair $(i,j)$ of its components has an invariant correlation $r_{ij}$ for $i,j\in [d]$.
We confine ourselves to the case when $\bX$ is continuous and its marginals are identical. 
Denote by $\Theta_d$ the set of all possible $d \times d$ invariant correlation matrices of continuous random vectors with identical marginals. Our goal 
is to understand the set  $\Theta_d$ and its corresponding models. 

For continuous marginals, Corollary~\ref{cor:ic:cont:identical} immediately leads to the following result.
For a $d$-copula $C$ and $i,j \in [d]$, $i\neq j$, its $(i,j)$th marginal is denoted by $C_{ij}$.
\begin{proposition} \label{prop:high-d}
For  a continuous $d$-dimensional random vector $\bX$ with identical marginals,
$\bX\in \mathrm{IC}^d$ holds if and only if
for every $i,j \in [d]$ with $i \neq j$, there exists $r_{ij}\in [0,1]$ such that the copula $C_{ij}$ of $(X_i,X_j)$ satisfies
$(C_{ij}+C_{ji})/2 = r_{ij} M + (1-r_{ij})\Pi$.
  \end{proposition}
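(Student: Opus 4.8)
The statement is essentially Corollary~\ref{cor:ic:cont:identical} read coordinate-pair by coordinate-pair, so the plan is short. By definition $\bX\in\mathrm{IC}^d$ means precisely that for each $i\neq j$ there is an $r_{ij}$ with $(X_i,X_j)\in\mathrm{IC}_{r_{ij}}$; since $\bX$ has identical continuous marginals, say $F$, every such pair $(X_i,X_j)$ has identical continuous marginal $F$ and a unique copula $C_{ij}$. Thus it suffices to prove, for one fixed pair, that $(X_i,X_j)\in\mathrm{IC}_{r_{ij}}$ for some $r_{ij}\in[0,1]$ if and only if $(C_{ij}+C_{ji})/2=r_{ij}M+(1-r_{ij})\Pi$; and then $R=(r_{ij})$ is automatically a correlation matrix, being the correlation matrix of $\bX$ itself (take $g=\mathrm{id}$), so no separate check on $R$ is needed.

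If one assumes, as in Corollary~\ref{cor:ic:cont:identical}, that $F$ is in addition strictly increasing, the pairwise equivalence is exactly the equivalence (i)$\Leftrightarrow$(ii) there, with $r_{ij}\in[0,1]$ coming from (iii). To cover a general continuous $F$ I would instead argue through Theorem~\ref{thm:main:sec:4}: by Sklar's theorem the joint cdf of $(X_i,X_j)$ is $H_{ij}(x,y)=C_{ij}(F(x),F(y))$, and $H_{ij}(y,x)=C_{ij}(F(y),F(x))=C_{ji}(F(x),F(y))$ via the trivial identity $C_{ji}(u,v)=C_{ij}(v,u)$. Substituting into the quasi-$r$-Fr\'echet identity \eqref{eq:identical:identity} shows that $(X_i,X_j)$ is quasi-$r_{ij}$-Fr\'echet exactly when $(C_{ij}(s,t)+C_{ji}(s,t))/2=r_{ij}M(s,t)+(1-r_{ij})\Pi(s,t)$ holds for all $(s,t)\in\operatorname{Ran}(F)^2$; since $\operatorname{Ran}(F)\supseteq(0,1)$ and $C_{ij},C_{ji},M,\Pi$ are all continuous, this extends to the whole of $[0,1]^2$. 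Theorem~\ref{thm:main:sec:4} then identifies being quasi-$r_{ij}$-Fr\'echet with $(X_i,X_j)\in\mathrm{IC}_{r_{ij}}$, giving both directions.

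To pin down the range $r_{ij}\in[0,1]$ without quoting Corollary~\ref{cor:ic:cont:identical}, I would evaluate the functional identity on the diagonal: $C_{ij}(s,s)=r_{ij}s+(1-r_{ij})s^2$, so $r_{ij}=\lim_{s\downarrow0}C_{ij}(s,s)/s\ge0$ because $C_{ij}\ge0$, while $C_{ij}(s,s)\le M(s,s)=s$ forces $(1-r_{ij})(s^2-s)\le0$ and hence $r_{ij}\le1$; alternatively one may simply invoke the remark made right after Corollary~\ref{cor:ic:cont:identical} that uncountably supported marginals exclude negative invariant correlation.

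I do not anticipate a genuine obstacle: the proposition is flagged as immediate and the work is bookkeeping. The one point deserving explicit attention is that a continuous cdf need not be strictly increasing, so $\operatorname{Ran}(F)$ can be a proper subset of $[0,1]$; this is harmless because the required identity already holds on the dense square $(0,1)^2$ and both sides are continuous, but it should be spelled out rather than left to a blanket appeal to Corollary~\ref{cor:ic:cont:identical}, which is stated for strictly increasing marginals.
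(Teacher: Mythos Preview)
Your proposal is correct and follows the same route as the paper, which simply states that Proposition~\ref{prop:high-d} is an immediate consequence of Corollary~\ref{cor:ic:cont:identical} applied to each pair $(X_i,X_j)$. You are in fact more careful than the paper on one technical point: Corollary~\ref{cor:ic:cont:identical} is stated for \emph{strictly increasing} continuous marginals, whereas Proposition~\ref{prop:high-d} only assumes continuity; you close this small gap by going back to Theorem~\ref{thm:main:sec:4} and using the density of $\operatorname{Ran}(F)$ in $[0,1]$ together with continuity of copulas, and you give a clean self-contained argument for $r_{ij}\in[0,1]$ via the diagonal. The paper glosses over this distinction, so your extra paragraph is a genuine (if minor) improvement rather than a different method.
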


  
Let $k\in \mathbb N$.
For $i \in [d]$, let $\mathbf Z_i =(Z_{i 1},\dots,Z_{i k})^{\top}$ be a $\{0,1\}^k$-valued Bernoulli random vector with $\mathbf Z_i^\top \mathbf Z_i=1$. In other words, $\mathbf Z_i$ has a categorical distribution with $\sum_{j=1}^k Z_{ij}=1$. Write the matrix $\Gamma=(Z_{ij})_{d \times k}$, and call it a $d\times k$ {categorical random matrix}.  Let $\mathbf U$ be an independent uniform random vector on $[0,1]^k$ independent of $\Gamma$.
Let  
 \begin{align}\label{eq:newmodel}
 \mathbf X=\Gamma \mathbf U  =( \mathbf Z_1^\top \mathbf U,\dots,\mathbf Z_d^\top  \mathbf U).
 \end{align}
 We first show that $\mathbf X$ has an invariant correlation matrix.
 \begin{proposition}\label{prop:newmodel}
 For $\mathbf X$ in \eqref{eq:newmodel}, the following statements hold:
 \begin{enumerate}[(i)]
 \item $\mathbf X$ has standard uniform marginals; 
 \item $\mathbf X$ has an invariant correlation matrix;
 \item The correlation matrix of  $\mathbf X$ (equal to its tail-dependence matrix) is given by $\E[\Gamma  \Gamma^\top]$.
 \end{enumerate}
 \end{proposition}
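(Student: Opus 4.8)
The plan is to establish the three claims in order, using the conditional independence structure that $\Gamma$ is independent of $\mathbf U$ and that, given $\Gamma$, each $X_i = \mathbf Z_i^\top \mathbf U$ simply ``selects'' one coordinate of $\mathbf U$.

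For (i): condition on $\Gamma$. Since $\mathbf Z_i^\top \mathbf Z_i = 1$, the vector $\mathbf Z_i$ is an indicator that $e_{J_i}$ for a (random) index $J_i \in [k]$, so $X_i = U_{J_i}$, which is standard uniform for every value of $J_i$. Hence $X_i \sim \mathrm{U}[0,1]$ unconditionally, by averaging over $\Gamma$.

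For (ii): I would show directly that for each pair $i\neq j$ the pair $(X_i, X_j)$ satisfies the characterization in Corollary \ref{cor:ic:cont:identical} (or Proposition \ref{prop:high-d}), i.e. that its copula $C_{ij}$ obeys $(C_{ij}(u,v)+C_{ji}(v,u))/2 = r_{ij} M(u,v) + (1-r_{ij})\Pi(u,v)$ with $r_{ij} = \p(J_i = J_j) = \E[\mathbf Z_i^\top \mathbf Z_j]$. Conditioning on $\Gamma$: on the event $\{J_i = J_j\}$ we have $X_i = X_j = U_{J_i}$, so $(X_i,X_j)$ is comonotonic with copula $M$; on the event $\{J_i \neq J_j\}$ the coordinates $U_{J_i}, U_{J_j}$ are distinct, hence independent, with copula $\Pi$. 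Averaging over $\Gamma$ gives that the joint law of $(X_i,X_j)$ is the mixture $r_{ij}$ (comonotone) $+\,(1-r_{ij})$ (independent), which is exactly the positive Fr\'echet copula $C_{r_{ij}}^{\mathrm{F}}$ — in particular exchangeable and of the form required. By Corollary \ref{cor:ic:cont:identical} (applied with the identity marginal), $(X_i,X_j)\in\mathrm{IC}_{r_{ij}}$, so $\mathbf X \in \mathrm{IC}^d$.

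For (iii): since each $X_i$ is uniform, $\var(X_i) = 1/12$ and $\E[X_i] = 1/2$, so $\corr(X_i,X_j) = 12\,\cov(X_i,X_j)$. Using the tower property and independence of $\Gamma$ and $\mathbf U$, compute $\E[X_i X_j] = \E\big[\E[U_{J_i}U_{J_j}\mid \Gamma]\big]$; the inner expectation equals $\E[U_1^2] = 1/3$ on $\{J_i=J_j\}$ and $(\E[U_1])^2 = 1/4$ on $\{J_i\neq J_j\}$, giving $\cov(X_i,X_j) = (1/3-1/4)\p(J_i=J_j) = (1/12)\,\E[\mathbf Z_i^\top\mathbf Z_j]$ and hence $\corr(X_i,X_j) = \E[\mathbf Z_i^\top \mathbf Z_j] = (\E[\Gamma\Gamma^\top])_{ij}$; the diagonal entries are $1$ since $\mathbf Z_i^\top\mathbf Z_i = 1$. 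The identification with the tail-dependence matrix follows because the pairwise copula $C_{r_{ij}}^{\mathrm{F}}$ has lower and upper tail-dependence coefficient both equal to $r_{ij}$. I do not anticipate a genuine obstacle here; the only point requiring slight care is making the conditioning argument rigorous when $\mathbf Z_i$, $\mathbf Z_j$ are themselves random (as opposed to fixed selection indices), which is handled cleanly by writing everything as an expectation over $\Gamma$ and invoking Fubini, together with the observation that two distinct coordinates of the iid vector $\mathbf U$ are independent.
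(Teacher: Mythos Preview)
Your proposal is correct and follows essentially the same route as the paper: condition on $\Gamma$ (equivalently, on the selected indices $J_i$) to see that each $X_i$ is uniform and that each pair $(X_i,X_j)$ has the positive Fr\'echet copula $C_{r_{ij}}^{\mathrm F}$ with $r_{ij}=\E[\mathbf Z_i^\top\mathbf Z_j]$, then invoke the bivariate characterization to conclude invariant correlation. The only cosmetic difference is that for (iii) the paper reads the correlation $r_{ij}$ directly off the Fr\'echet copula established in (ii), whereas you recompute it via moments; both are fine.
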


 \begin{example}\label{ex:bates:model}
  Take $k=d+1$ and let $Z_{ij}=0$ for all $j\in [d]\setminus \{i\}$. This implies $Z_{ik}=1-Z_{ii}$ for $i\in [d]$. For this model, we have 
 $
 X_i =Z_{ii} U_i +   Z_{ik} U_{d+1} = (1-Z_{ik})U_i +   Z_{ik} U_{d+1},
 $ $i\in [d]$,
and the correlation coefficient between $X_i$ and $X_j$ is given by $\E[Z_{ik}Z_{jk}]$.
 \end{example}

\begin{remark}\label{rem:newmodel:general}
{Let $\mathbf{Y}=(Y_1,\dots,Y_k)^\top$ be iid observations from a  distribution $F$, and let $\Gamma$ be as in~\eqref{eq:newmodel} independent of $\mathbf{Y}$.
Following the proof of Proposition~\ref{prop:newmodel}, it can be shown that the model
     \begin{align}\label{eq:newmodel:general}
\mathbf X=\Gamma \mathbf Y  =( \mathbf Z_1^\top \mathbf Y,\dots,\mathbf Z_d^\top  \mathbf Y)
 \end{align}
 has an identical marginal distribution $F$ and an invariant correlation matrix $\mathbb{E}[\Gamma\Gamma^\top]$.
 The model~\eqref{eq:newmodel:general} has singular components, and can be useful in the following contexts, although we do not explore further in this paper.
 \begin{enumerate}[(i)]
     \item {Bootstrap samples}: We can regard $\mathbf{X}$ as a bootstrap sample drawn from $\mathbf{Y}$, where $\Gamma$ represents a resampling rule.
    \item {Sample duplication}: Regarding $F$ as a distribution of a statistic in a population, $\mathbf{X}$ can be seen as a sample from $F$ in the presence of {sample duplication}, that is, some individuals are extracted multiple times.
    In this case, $\Gamma$ provides as random duplication structure.
 \item {Shock model}:  Let $\mathbf{X}$ be a vector of (possibly curtate) failure times of identical components in a system. Then~\eqref{eq:newmodel:general} models simultaneous failures caused by common shocks represented by $\Gamma$. 
\item {Markov chain}: For $p_1,\dots,p_{d-1}\in [0,1]$, define a Markov chain as follows:
\begin{align*}
    X_1=Y_1\quad\text{and}\quad X_i=\begin{cases}
        X_{i-1},& \text{with probability $p_{i-1}$},\\
        Y_i, & \text{otherwise},\\
    \end{cases}\quad \text{$i\in \{2,\dots,d\}$,}
\end{align*}
where $Y_1,\dots,Y_d$ are iid from a distribution $F$.
Then $\mathbf{X}$ admits the representation~\eqref{eq:newmodel:general} by sequentially defining $\mathbf{Z}_i$, $i\in [d]$, as follows:
\begin{align*}
    \mathbf{Z}_1 = \be_1\quad\text{and}\quad
    \mathbf{Z}_i = 
    \begin{cases}    
    \mathbf{Z}_{i-1},& \text{with probability $p_{i-1}$},\\
    \be_i,&\text{otherwise},
    \end{cases}\quad \text{$i\in \{2,\dots,d\}$,}
\end{align*}
where $\be_1,\dots,\be_d$ is the standard basis of $\mathbb{R}^d$. This model describes a  Markov process, of which each step has a certain probability to stay in the previous state and otherwise randomly jumps to a state according to $F$.
\end{enumerate}
}
\end{remark}
 
 Denote by $\mathcal Z_{d,k}$ the set of all matrices of the form $\E[\Gamma \Gamma^\top]$ for some $d\times k$ categorical random matrix $\Gamma$.  Proposition \ref{prop:newmodel} indicates that $\mathcal Z_{d,k}$ is a subset of $\Theta_d$. 
Lemma \ref{prop:compatibility:1} in Appendix \ref{sec:proof:cha:ICM} shows that $\mathcal Z_{d,k}$ is the convex hull of the collection of all clique partition points~\citep{GW90, FSS17}, which we will explain below. 
 Let  $\mathbf{A}=(A_1,\dots,A_{k})$ be a $k$-partition of $[d]$, where some of $A_s$, $s\in[k]$, may be empty.
For $k\ge d$, the number of non-empty partition components in $\mathbf{A}$ is at most $d$.
The {clique partition point}   $\Sigma^{\mathbf{A}} =
(\Sigma_{ij}^\mathbf{A})_{d\times d}$
for the $k$-partition $\mathbf{A}=(A_1,\dots,A_k)$ of $[d]$ is  defined as 
\begin{align}\label{eq:cp}
\Sigma_{ij}^{\mathbf{A}}=\sum_{s=1}^k\id_{\{i,j\in A_s\}},~~~i,j\in [d].
\end{align}
In other words, $\Sigma^{\mathbf{A}}$ can be decomposed into disjoint  submatrices of all entries $1$ and the other entries are $0$.
We can verify that $\Sigma^{\mathbf{A}}\in  \mathcal Z_{d,k}$,
as it is the correlation matrix of the following model of the form~\eqref{eq:newmodel}:
\begin{align}\label{eq:model:cpp}
X_i^{\mathbf{A}}=\sum_{s=1}^k\id_{\{i\in A_s\}}U_s,  ~~~i\in [d]. 
\end{align}

Denote by $\mathcal S_{d,k}$ the collection of all possible clique partition points~\eqref{eq:cp} for all possible $k$-partitions of $[d]$, and  let $\mathrm{Conv}(\mathcal S_{d,k})$ be the convex hull of $\mathcal S_{d,k}$.

    \begin{theorem}\label{thm:ch:mat}
For $d\ge 2$,  $\Theta_d=\mathcal Z_{d,d}=\bigcup_{k\in \mathbb N} \mathcal Z_{d,k}  =  \mathrm{Conv}(\mathcal S_{d,d})$.
  \end{theorem}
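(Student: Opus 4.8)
The plan is to prove the chain of equalities $\Theta_d=\mathcal Z_{d,d}=\bigcup_{k\in\mathbb N}\mathcal Z_{d,k}=\mathrm{Conv}(\mathcal S_{d,d})$ by a cyclic argument, establishing four inclusions. First I would record the easy inclusions coming from the constructions already set up in the excerpt. By Proposition~\ref{prop:newmodel}, each $\mathcal Z_{d,k}\subseteq\Theta_d$, so $\bigcup_{k}\mathcal Z_{d,k}\subseteq\Theta_d$; and trivially $\mathcal Z_{d,d}\subseteq\bigcup_k\mathcal Z_{d,k}$. For the reverse containment $\Theta_d\subseteq\mathcal Z_{d,d}$, start from an arbitrary $R\in\Theta_d$, realized by a continuous random vector $\bX$ with identical marginals; by Corollary~\ref{cor:copula} we may pass to the copula, and by Proposition~\ref{prop:high-d} each pairwise copula $C_{ij}$ satisfies $(C_{ij}+C_{ji})/2=r_{ij}M+(1-r_{ij})\Pi$ with $r_{ij}\in[0,1]$. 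Thus $R=(r_{ij})$ has all entries in $[0,1]$ and unit diagonal; what must be shown is that \emph{any} such $R$ that is genuinely attainable lies in the convex hull of clique partition points on $d$ labels, i.e.\ $R\in\mathrm{Conv}(\mathcal S_{d,d})$. Here I would invoke Lemma~\ref{prop:compatibility:1}, which identifies $\mathcal Z_{d,k}=\mathrm{Conv}(\mathcal S_{d,k})$: I need the structural fact that being an invariant correlation matrix forces $R$ into the clique partition polytope, which is exactly what the lemma delivers once one knows $R\in\mathcal Z_{d,k}$ for some $k$. So the real content is the implication $\Theta_d\subseteq\bigcup_k\mathcal Z_{d,k}$, i.e.\ every invariant correlation matrix is realized by some categorical-matrix model~\eqref{eq:newmodel}.

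For that step the idea is to reconstruct the joint resampling matrix $\Gamma$ from the pairwise data. From Corollary~\ref{cor:ic:cont:identical}, for each pair $(i,j)$ the random rearrangement of $(X_i,X_j)$ has positive Fréchet copula $C^{\mathrm F}_{r_{ij}}$, equivalently $(X_i,X_j)$ agrees ``with probability $r_{ij}$'' and is independent otherwise. I would argue that the full vector $\bX$ can be coupled to a model in which there is an underlying partition-valued random object: define a random equivalence relation on $[d]$ by declaring $i\sim j$ when $X_i=X_j$. Because the marginals are continuous and identical, on the event $X_i=X_j$ the pair is comonotone, and the transitive closure is automatic ($X_i=X_j$, $X_j=X_\ell\Rightarrow X_i=X_\ell$); this gives a genuine random partition $\mathbf A$ of $[d]$, and conditionally on $\mathbf A$ the vector $\bX$ is distributed as the clique-partition model~\eqref{eq:model:cpp} built from that partition (the distinct blocks get iid uniform values). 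Taking expectations over $\mathbf A$ and using $\Sigma^{\mathbf A}\in\mathcal S_{d,d}$ yields $R=\mathbb E[\Sigma^{\mathbf A}]\in\mathrm{Conv}(\mathcal S_{d,d})$, and picking a categorical matrix $\Gamma$ consistent with $\mathbf A$ shows $R\in\mathcal Z_{d,d}$. Finally, $\mathrm{Conv}(\mathcal S_{d,d})\subseteq\bigcup_k\mathcal Z_{d,k}$ is immediate from $\mathcal S_{d,d}\subseteq\mathcal Z_{d,d}$ (each $\Sigma^{\mathbf A}$ is realized by~\eqref{eq:model:cpp}) plus convexity of $\mathcal Z_{d,d}$ from Lemma~\ref{prop:compatibility:1}; this closes the cycle.

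The main obstacle I anticipate is justifying that the random equivalence relation $i\sim j\iff X_i=X_j$ is well-defined and that conditionally on it the vector is exactly the clique-partition model --- in other words, that the pairwise ``comonotone-or-independent'' structure globalizes coherently. Pairwise constraints do not in general determine a joint law, so one has to use the specific rigidity: $\corr(X_i,X_j)=r_{ij}$ being invariant forces, via Corollary~\ref{cor:ic:cont:identical}, that the \emph{rearranged} pair is a positive Fréchet mixture, and one must check that overlapping events $\{X_i=X_j\}$ are mutually consistent so that $\{X_i: i\in[d]\}$ takes only a random \emph{partition}'s worth of distinct values with the blocks filled by iid copies of the common marginal. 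I would handle this by working on the copula level with continuous strictly increasing marginals (legitimate by Corollary~\ref{cor:copula}), where $X_i=X_j$ becomes $U_i=U_j$ for the uniform representatives, and then appeal to the exchangeable-rearrangement reduction together with Lemma~\ref{prop:compatibility:1} to avoid reproving the polytope identification from scratch. If a fully self-contained coupling proves delicate, an alternative is purely linear-algebraic: show directly that the matrix $R$ with $r_{ij}\in[0,1]$ arising from an invariant-correlation model must satisfy all the facet-defining (triangle and clique) inequalities of the clique partition polytope --- but I expect the probabilistic coupling route above, leaning on Lemma~\ref{prop:compatibility:1}, to be the cleanest.
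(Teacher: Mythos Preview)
Your overall architecture is the same as the paper's: the equalities $\mathcal Z_{d,d}=\bigcup_k\mathcal Z_{d,k}=\mathrm{Conv}(\mathcal S_{d,d})$ come from Lemmas~\ref{prop:z} and~\ref{prop:compatibility:1} together with $\mathcal S_{d,k}=\mathcal S_{d,d}$ for $k\ge d$, the inclusion $\mathcal Z_{d,d}\subseteq\Theta_d$ is Proposition~\ref{prop:newmodel}, and the substantive step is $\Theta_d\subseteq\mathrm{Conv}(\mathcal S_{d,d})$, for which you (and the paper) pass to uniform marginals and introduce the random partition of $[d]$ determined by the equivalence $i\sim j\iff X_i=X_j$.

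There is, however, a genuine gap in the part you flag as the ``main obstacle''. Your plan is to show that, conditionally on the random partition $\mathbf A$, the vector $\bX$ is distributed exactly as the clique-partition model~\eqref{eq:model:cpp}, i.e.\ distinct blocks receive iid uniform values. This conditional-law claim is \emph{false} in general. Example~\ref{ex:not:prds} (the FGM copula with $\theta=1$) has all bivariate margins equal to $\Pi$, hence $r_{ij}=0$ for $i\neq j$ and $\p(X_i=X_j)=0$, so the random partition is a.s.\ the finest one; yet conditionally on that partition the three coordinates are \emph{not} iid uniform, since the joint law is not $\Pi_3$. So the ``pairwise comonotone-or-independent structure globalizes coherently'' step cannot be carried out as stated.

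The fix is that you never needed that conditional claim. For $R=\E[\Sigma^{\mathbf A}]$ you only need the entrywise identity
\[
\E\bigl[\Sigma_{ij}^{\mathbf A}\bigr]=\p(i\sim j)=\p(X_i=X_j)=r_{ij},
\]
and the last equality is a \emph{pairwise} fact that follows directly from Proposition~\ref{prop:high-d}: with uniform marginals and $L=\{(u,v):u<v\}$,
\[
\p(X_i\neq X_j)=V_{C_{ij}}(L)+V_{C_{ji}}(L)=2\bigl(r_{ij}V_M(L)+(1-r_{ij})V_\Pi(L)\bigr)=1-r_{ij}.
\]
This is exactly how the paper proceeds. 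Once $r_{ij}=\p(X_i=X_j)$ is in hand, $R=\E[\Sigma(\bX)]$ is immediate, and since $\Sigma(\bX)$ takes only finitely many values, all in $\mathcal S_{d,d}$, you get $R\in\mathrm{Conv}(\mathcal S_{d,d})$ with no analysis of the conditional law whatsoever. In short: drop the conditional-distribution step entirely and read off $r_{ij}=\p(X_i=X_j)$ from the symmetrized bivariate copula; the obstacle you anticipated then disappears.
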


 Theorem~\ref{thm:ch:mat} characterizes $\Theta_d$ as the convex hull of $\mathcal S_{d,d}$. 
By Theorem~\ref{thm:ch:mat}, it suffices to take $k=d$ to characterize $\Theta_d$. This is due to the fact that $\mathcal Z_{d,k}$ is increasing in $k$ and $\mathcal Z_{d,k}=\mathcal Z_{d,d}$ for all $k\ge d$; see Section~\ref{sec:proof:cha:ICM} for details.
The set $\mathrm{Conv}(\mathcal S_{d,d})$ is called the {clique partition polytope} in dimension $d$, which is known to be relevant to matrix compatibility of other dependence measures; see~Appendix~\ref{app:other:measures} for details.
  This relevance also clues in the volume of $\Theta_d$ in comparison with the set of all correlation matrices.

\begin{example}
    {
    Let $d=3$.
    Then the set $\mathcal S_{3,3}$ consists of the following matrices:
    \begin{align*}
    \begin{pmatrix}
    1 &1 & 1\\
       1 &1 & 1\\
       1 &1 & 1\\
\end{pmatrix},\quad
\begin{pmatrix}
    1 &1 & 0\\
       1 &1 & 0\\
       0 &0 & 1\\
\end{pmatrix},\quad
\begin{pmatrix}
    1 &0 & 1\\
       0 &1 & 0\\
       1 &0 & 1\\
\end{pmatrix},\quad
\begin{pmatrix}
    1 &0 & 0\\
       0 &1 & 1\\
       0 &1 & 1\\
\end{pmatrix},\quad
\begin{pmatrix}
    1 &0 & 0\\
       0 &1 & 0\\
       0 &0 & 1\\
\end{pmatrix},
\end{align*}
which are generated by the partitions 
$(\{1,2,3\})$, $(\{1,2\},\{3\})$, $(\{1,3\},\{2\})$, $(\{2,3\},\{1\})$ and
$(\{1\},\{2\},\{3\})$, respectively.
Hence, when correlation matrices are embedded on $\mathbb{R}^3$ by $(r_{ij})_{3\times 3}\mapsto (r_{12},r_{13},r_{23})$, the set $\Theta_3=\mathrm{Conv}(\mathcal S_{3,3})$ forms a {triangular bipyramid}, a hexahedron with six triangular faces.
Since this set (as a subset of $\R^3$) does not contain, for example, $(0.8,0.5,0.2)$, we have that $\Theta_3$ is strictly smaller than the set of all correlation matrices with non-negative entries.
}
\end{example}
  
Theorem \ref{thm:ch:mat} also implies that all invariant correlation matrices are realized by the model \eqref{eq:newmodel}.
Nevertheless, not all exchangeable random vectors with an invariant correlation matrix can be represented by \eqref{eq:newmodel}.
For instance, any $d$-dimensional random vector with pairwise independent components has an invariant correlation matrix $I_d$ but is not necessarily modelled by \eqref{eq:newmodel}.
Generally, it is not possible to completely identify a multivariate model from bivariate properties.  A class of copulas whose bivariate marginals are Fr\'echet copulas is studied by \cite{YQW09}.
  
We next consider the {membership testing problem} of $\Theta_d$, which aims at determining whether a given correlation matrix $R=(r_{ij})_{d\times d}$ belongs to $\Theta_d$; that is, $R$ is realized as an invariant correlation matrix of some $d$-dimensional random vector.
Note that the number of vertices of the clique partition polytope equals the number of different partitions of $[d]$, and this number is known as the {Bell number} $N_d$, which can be computed explicitly
~\citep{GW90}.
The number $N_d$ grows rapidly in $d$. For example, $N_3=5$, $N_4 = 15$, $N_5 = 52$ and $N_{10} >10^5$.
By Theorem~\ref{thm:ch:mat}, we have $R=(r_{ij})_{d\times d}\in \Theta_d$ if and only if there exists $\alpha_1,\dots,\alpha_{N_d}\ge0$ such that $\sum_{\ell=1}^{N_d} \alpha_{\ell}=1$ and  $\sum_{\ell=1}^{N_d}\alpha_{\ell} \Sigma^{(\ell)}=R$, where $\mathcal S_{d,d}=\left\{\Sigma^{(\ell)}:\ell\in[N_d]\right\}$, $\Sigma^{(\ell)}=(\sigma_{ij}^{(\ell)})_{d\times d}$, is a collection of all vertices of the clique correlation polytope.
These linear constraints on $\boldsymbol{\alpha}$ can be summarized into $V_d \boldsymbol{\alpha}=\tilde{\mathbf{r}}$, 
where $\tilde{\mathbf{r}}\in [0,1]^{\tilde d}$ and $V_d\in \{0,1\}^{\tilde d\times N_d}$ are defined in~\eqref{eq:r:vec} below with $\tilde d =1+d(d-1)/2$.
By introducing an auxiliary variable $\mathbf{z}\in \R^{\tilde d}$, 
a given correlation matrix $R=(r_{ij})_{d\times d} \in [0,1]^{d\times d}$ is in $\Theta_d$ if and only if the following linear program attains zero:
  \begin{align}
    \label{eq:LP} \min_{\mathbf{z}\in \R^{\tilde d},\boldsymbol{\alpha}\in \R^{N_d}} 
    \mathbf{1}_{\tilde d}^\top \mathbf{z}
    \quad\text{subject to}\ \begin{cases}
      V_d\boldsymbol{\alpha} + \mathbf{z} = \tilde{\mathbf{r}},\\
\boldsymbol{\alpha}\ge \mathbf{0}_{N_d}\text{ and }\mathbf{z} \geq \mathbf{0}_{\tilde d},
    \end{cases}
  \end{align}
 where \begin{align}\label{eq:r:vec}     
  \tilde{\mathbf{r}} =(r_{12},r_{13},r_{23},\dots,r_{d-1\,d},1),\qquad
    V_d =
    \begin{pmatrix}
      \sigma_{12}^{(1)}  & \sigma_{12}^{(2)} &\cdots & \sigma_{12}^{(N_d)} \\
      \sigma_{13}^{(1)}  & \sigma_{13}^{(2)} &\cdots & \sigma_{13}^{(N_d)} \\
      \sigma_{23}^{(1)}  & \sigma_{23}^{(2)} &\cdots & \sigma_{23}^{(N_d)} \\
      \vdots                  & \vdots                     &\vdots &  \vdots \\
      \sigma_{(d-1)\,d}^{(1)} &  \sigma_{(d-1)\,d}^{(2)}  &\cdots & \sigma_{(d-1)\,d}^{(N_d)} \\
      1                  & 1                     &\cdots &    1\\
    \end{pmatrix}.
  \end{align}
Note that any correlation matrix with at least one negative entry is immediately excluded from $\Theta_d$.
For $R=(r_{ij})_{d\times d} \in [0,1]^{d\times d}$, 
the set of constraints in~\eqref{eq:LP} is always nonempty since it contains the pair
$(\boldsymbol{\alpha},\mathbf{z})=(\mathbf{0}_{N_d},\tilde{\mathbf{r}})$.  
The above linear program is solved, for example, with the \textsf{R}\ package \textsf{lpSolve}
although it can be computationally demanding for large $d$. 

\begin{remark}
\label{prop:simulation}
Suppose that the linear program~\eqref{eq:LP} attains $0$ at $\boldsymbol{\alpha}=\boldsymbol{\alpha}^\ast$ for a given correlation matrix $R=(r_{ij})_{d\times d}$.
Then one can simulate a $d$-dimensional random vector with invariant correlation $R$ as a mixture of the models of the form~\eqref{eq:model:cpp} with $\boldsymbol{\alpha}^\ast$ being the vector of mixture weights to $\{\Sigma^{(\ell)}:\ell\in[N_d]\}$.
\end{remark}


\subsection{Relationship to positive regression dependence}\label{sec:PRD}

 {Positive regression dependence}~\citep{L66} is a concept of dependence known to be useful to control the {false discovery rate} (FDR) through the procedure of \cite{BH95} in the context of multiple testing (\citealp{BY01}).
In this section, we explore the relationship between this dependence property and invariant correlation.

We start with the definition.
A set $A \subseteq \mathbb{R}^d$ is said to be {increasing} if $\mathbf{x} \in A$ implies $\mathbf{y}\in A$ for all $\mathbf{y}\geq \mathbf{x}$. 

\begin{definition}\label{def:prds}
A $d$-dimensional random vector $\mathbf{X}=(X_1,\dots,X_d)$ is said to have {positive regression dependence on the subset  $\mathcal N \subseteq [d]$ (PRDS)}  if for any index $i\in \mathcal N$ and increasing set $A\subseteq \mathbb{R}^d$, the function $x \mapsto \p(\mathbf{X}\in A \mid X_i = x)$ is increasing. 
The case of $\mathcal N=[d]$ is simply called {positive regression dependence (PRD)}. 
\end{definition}

Let us first consider $d=2$.
A direct consequence from Lemma~\ref{lem:quasi_Fre} in Appendix~\ref{app:2} is that every exchangeable random vector $(X_1,X_2)$ with invariant correlation $r\ge 0$ is PRD since~\eqref{eq:identical:identity:set} implies that, for every increasing $A$ and $x \in\R$, we have
$
\p\left((X_1,X_2)\in A \mid X_1 = x\right)=r 
\id_{\{(x,x)\in A\}}  + (1-r)\p\left((x,X_2)\in A\right)
$, which is increasing in $x$.
Note, however, that without exchangeability PRD is not implied by invariant correlation; see Example~\ref{ex:not:prd:2} in Appendix~\ref{app:auxiliary:prd}. 
 
For $d\ge 3$, there also exists an exchangeable model which has an invariant correlation matrix with non-negative entries but does not have PRD; this is reported in Example~\ref{ex:not:prds} in Appendix~\ref{app:auxiliary:prd}. 
Therefore, a dependence structure admitting an invariant correlation matrix does not imply PRD even if exchangeability is additionally assumed.
This is not surprising as invariant correlation is essentially a pairwise property, and 
pairwise dependence does not determine  overall dependence.
Nevertheless, the next proposition shows that the model~\eqref{eq:newmodel} has PRD, and hence also PRDS for any subset $\mathcal N\subseteq [d]$.  
Together with Theorem~\ref{thm:ch:mat}, this result indicates that every invariant correlation matrix admits a model with PRD. 

  \begin{proposition}\label{prop:X:prd}
  The random vector $\mathbf X=\Gamma \mathbf{U}$ in~\eqref{eq:newmodel} has PRD. 
  \end{proposition}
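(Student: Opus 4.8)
The plan is to verify the defining inequality of PRD directly for $\mathbf X=\Gamma\mathbf U$ by conditioning on the categorical random matrix $\Gamma$. Fix an index $i\in[d]$ and an increasing set $A\subseteq\R^d$. Since $\mathbf Z_i$ is categorical, on the event $\{Z_{i\ell}=1\}$ we have $X_i=U_\ell$, so conditioning on $X_i=x$ together with $\{Z_{i\ell}=1\}$ amounts to conditioning on $U_\ell=x$. The first step is therefore to write, using independence of $\Gamma$ and $\mathbf U$ and the law of total probability over which coordinate $\ell$ is selected by $\mathbf Z_i$,
\begin{align*}
\p(\mathbf X\in A\mid X_i=x)
=\sum_{\ell=1}^k \p(Z_{i\ell}=1)\,\p\bigl(\Gamma\mathbf U\in A\mid U_\ell=x,\ Z_{i\ell}=1\bigr),
\end{align*}
after checking that the conditioning is well posed (the marginal of $X_i$ is uniform by Proposition~\ref{prop:newmodel}(i), so $X_i=x$ is a regular conditioning value for a.e.\ $x$, and $\p(Z_{i\ell}=1)$ does not depend on $x$). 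It then suffices to show each summand $x\mapsto \p(\Gamma\mathbf U\in A\mid U_\ell=x,\ Z_{i\ell}=1)$ is increasing in $x$.

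The second step handles a single such summand. Condition further on the whole matrix $\Gamma$ restricted to the event $\{Z_{i\ell}=1\}$: for a fixed realization $\gamma$ of $\Gamma$ with $\gamma_{i\ell}=1$, the vector $\Gamma\mathbf U$ becomes $\gamma\mathbf U$, and each coordinate $(\gamma\mathbf U)_j$ equals $U_{\sigma(j)}$ where $\sigma(j)$ is the unique column index with $\gamma_{j\sigma(j)}=1$. Split the coordinates into those with $\sigma(j)=\ell$ (which equal $x$ once we condition on $U_\ell=x$) and those with $\sigma(j)\neq\ell$ (which depend only on $(U_m)_{m\neq\ell}$, independent of $U_\ell$). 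Writing $J_\ell(\gamma)=\{j:\sigma(j)=\ell\}\ni i$, the conditional probability becomes
\begin{align*}
\p\bigl((\gamma\mathbf U)\in A\mid U_\ell=x\bigr)
=\p\bigl(\,\mathbf V^{(x)}\in A\,\bigr),
\end{align*}
where $\mathbf V^{(x)}$ has coordinate $x$ in positions $J_\ell(\gamma)$ and the (fixed-distribution, $x$-free) random entries $U_{\sigma(j)}$ elsewhere. Since $A$ is an increasing set and $x\mapsto\mathbf V^{(x)}$ is coordinatewise increasing (the entries not equal to $x$ do not move), the indicator $\id_{\{\mathbf V^{(x)}\in A\}}$ is increasing in $x$ pathwise, hence its expectation $\p(\mathbf V^{(x)}\in A)$ is increasing in $x$. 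Averaging this over the conditional law of $\Gamma$ given $\{Z_{i\ell}=1\}$ (a mixture of increasing functions is increasing) gives that the $\ell$-th summand is increasing, and summing over $\ell$ with the nonnegative, $x$-independent weights $\p(Z_{i\ell}=1)$ finishes the argument. Finally, since $i\in[d]$ was arbitrary, $\mathbf X$ is PRD, and PRDS on any $\mathcal N\subseteq[d]$ follows immediately.

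The main obstacle I anticipate is purely a matter of care rather than depth: making the conditioning on $X_i=x$ rigorous when $\Gamma$ and $\mathbf U$ are both random and the selected coordinate of $\mathbf U$ varies with $\Gamma$. The clean way around this is exactly the order above — first condition on the discrete object $\Gamma$ (or at least on $\{Z_{i\ell}=1\}$), which reduces everything to the elementary fact that conditioning a uniform vector on one of its coordinates being $x$ leaves the others untouched, and then invoke the definition of an increasing set. One should also note at the outset that it is harmless to work with standard uniform $\mathbf U$ rather than general iid $\mathbf Y$, since PRD is invariant under a common increasing marginal transform; but for the statement as given ($\mathbf X=\Gamma\mathbf U$) this remark is not even needed.
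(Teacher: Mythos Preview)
Your proposal is correct and follows essentially the same route as the paper: decompose $\p(\mathbf X\in A\mid X_i=x)$ over the events $\{Z_{i\ell}=1\}$, replace $X_i$ by $U_\ell$ on that event, and then argue that each summand is increasing in $x$ because increasing $x$ only raises certain coordinates of the vector. The only cosmetic difference is that the paper establishes $\p(Z_{i\ell}=1\mid X_i=x)=\p(Z_{i\ell}=1)$ by explicitly verifying that $(Z_{i\ell},X_i)\laweq(Z_{i\ell},U_\ell)$, and for the monotonicity step it writes $\mathbf X=\Gamma_{-\ell}\mathbf U_{-\ell}+x(Z_{1\ell},\dots,Z_{d\ell})^\top$ directly rather than conditioning further on the full realization $\gamma$; both are equivalent to what you outline.
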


  Since both the properties of PRD  and invariance correlation are preserved under increasing transforms, we immediately obtain that the model 
\begin{align}\label{eq:discrete}
        \mathbf X = \Gamma \mathbf V, \quad\mathbf V=(V_1,\dots,V_d)^\top,\quad V_i=g(U_i) \quad\mbox{for~} i\in [d], 
  \end{align}
 has PRD and an invariant correlation matrix for any increasing and admissible $g$ with $g(0)=0$. 
 For example, by choosing $g(x) =\lceil nx \rceil /n$, we obtain the discrete version of \eqref{eq:newmodel}
where $V_1,\dots,V_d$ are independent and uniformly distributed on $[n]/n$.

\section{Invariant correlation under increasing transforms}\label{sec:ic:inc}

In many applications, only increasing transforms of the random variables are relevant, as non-increasing transforms do not preserve the copula among random variables. In view of this, we study a variant of invariant correlation where we require \eqref{eq:inv:corr} to hold only for increasing transforms, instead of all transforms. By definition, this requirement is weaker 
than invariant correlation defined in Section \ref{sec:2}. The interesting question is 
then whether this formulation allows for more models than those characterized in Section \ref{sec:3}. 


\begin{definition}
Let $r \in [-1,1]$.
A bivariate random vector $(X,Y)$ is said to have an {invariant correlation} $r$ {under increasing transforms}
if~\eqref{eq:inv:corr} holds for all admissible increasing functions $g$.
The set of all such random vectors is denoted by $\mathrm{IC}^{\uparrow}_r$.
\end{definition}

Denote by $\mathrm{IC}^{\uparrow}=\bigcup_{r\in [-1,1]}\mathrm{IC}^{\uparrow}_r$.
Note that $\mathrm{IC}_r \subseteq \mathrm{IC}^{\uparrow}_r$ for every $r\in [-1,1]$, and $\mathrm{IC} \subseteq \mathrm{IC}^{\uparrow}$ by their definitions.

 We summarize in Table \ref{table:increase} the results on the relationship between  $\mathrm{IC}$ and $\mathrm{IC}^{\uparrow}$.
 \begin{table}
\caption{
Relationship between $\mathrm{IC}$ and $\mathrm{IC}^{\uparrow}$, where $\mathrm{IC} \subseteq \mathrm{IC}^{\uparrow}$ always holds. The abbreviations mean  sets corresponding to those in Table \ref{table:IC}. 
Note that QI is equivalent to IN if at least one of the marginals is bi-atomic (Proposition~\ref{pro:ic0:one_bi}).
We slightly abuse the notation so that equality is understood between corresponding subsets.}\label{table:increase}
\def\arraystretch{1.3}
\vskip-0.3cm\hrule
\smallskip
\centering\small
\begin{tabular}{cccc}
 \multicolumn{2}{c}{Marginals $F, G$}& $r=0$ &$r\neq 0$\\
 \hline
 \multicolumn{2}{l}{$F=G$} &  \multirow{3}{*}{\shortstack{$\mathrm{IC}_0=\mathrm{IC}_0^{\uparrow}=\mathrm{QI}$ (Theorem \ref{thm:icincrease0})}} & $\mathrm{IC}_r=\mathrm{IC}^{\uparrow}_r=\mathrm{QF}$ (Theorem \ref{thm:icincrease0})  \\ 
\cline{1-2} 
 \cline{4-4}
 \multirow{2}{*}{$F\neq G$}   & both bi-atomic &
 & $\mathrm{IC}_r\ne \mathrm{IC}^{\uparrow}_r$ (Example \ref{ex:bernoulli}) \\
 \cline{2-2} 
\cline{4-4}
 &  not both bi-atomic & 
 & $\mathrm{IC}_r= \mathrm{IC}_r^{\uparrow}=\varnothing$  (Theorem \ref{thm:icincrease0})\\
\end{tabular}
\hrule
\end{table}

Our main message is that invariant correlation confined to increasing transforms does not accommodate more models except for the case when both random variables are bi-atomic.
Therefore, in most cases, we can safely treat the two formulations as equivalent.

 The results  of Proposition~\ref{lem:h} and Corollary~\ref{cor:copula} can be easily extended to $\mathrm{IC}^{\uparrow}_r$.
We collect these useful properties  in the following corollary.
The proofs are analogous to those of Proposition~\ref{lem:h} and Corollary~\ref{cor:copula}, and thus omitted.

\begin{corollary}\label{cor:ic:inc:basic:properties}
The following statements hold.
\begin{enumerate}[(i)]
\item\label{item:h:inc}
Let $(X,Y)\in \mathrm{IC}^{\uparrow}_r$ for some $r \in [-1,1]$, and let $h$ be an admissible increasing function for $(X,Y)$.
Then, $(h(X),h(Y))\in \mathrm{IC}^{\uparrow}_r$.
\item\label{item:cop:inc} Suppose that $(X,Y)\sim H$ has identical continuous and strictly increasing marginal distributions, a correlation coefficient $r \in [-1,1]$, and a copula $C$. Then $H\sim \mathrm{IC}^{\uparrow}_r$ if and only if $C\sim \mathrm{IC}^{\uparrow}_r$.
\end{enumerate}
\end{corollary}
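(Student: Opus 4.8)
\textbf{Plan for proving Corollary~\ref{cor:ic:inc:basic:properties}.}
The strategy is to mirror, almost verbatim, the proofs of Proposition~\ref{lem:h} and Corollary~\ref{cor:copula}, only checking that the restriction to increasing transforms does not break anything. For part~\eqref{item:h:inc}, suppose $(X,Y)\in\mathrm{IC}^{\uparrow}_r$ and let $h$ be admissible and increasing for $(X,Y)$. Take an arbitrary admissible increasing $g$ for $(h(X),h(Y))$; I need to show $\corr(g(h(X)),g(h(Y)))=r$. The key observation is that $g\circ h$ is again increasing (composition of increasing maps) and measurable, and it is admissible for $(X,Y)$ because $g(h(X))=(g\circ h)(X)\in\mathcal L^2$ by the admissibility assumption on $g$ for $(h(X),h(Y))$ — this last identity is just the definition of the composed random variable. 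Hence $\corr((g\circ h)(X),(g\circ h)(Y))=r$ by $(X,Y)\in\mathrm{IC}^{\uparrow}_r$, which is exactly $\corr(g(h(X)),g(h(Y)))=r$; and applying this with $g=\mathrm{id}$ gives $\corr(h(X),h(Y))=r$ as well, so $(h(X),h(Y))\in\mathrm{IC}^{\uparrow}_r$.

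For part~\eqref{item:cop:inc}, write $F$ for the common continuous strictly increasing marginal of $X$ and $Y$, so that $(U,V):=(F(X),F(Y))\sim C$. The map $F$ is increasing and admissible (bounded range, hence $F(X),F(Y)\in\mathcal L^2$), so the ``only if'' direction follows immediately from part~\eqref{item:h:inc} applied with $h=F$. For the ``if'' direction, note that since $F$ is continuous and strictly increasing, its generalized inverse $F^{-1}$ is a genuine increasing function with $F^{-1}(F(X))=X$ and $F^{-1}(F(Y))=Y$ almost surely, and it is admissible for $(U,V)$ because $F^{-1}(U)=X$ and $F^{-1}(V)=Y$ lie in $\mathcal L^2$ by hypothesis; applying part~\eqref{item:h:inc} to $(U,V)\in\mathrm{IC}^{\uparrow}_r$ with $h=F^{-1}$ yields $(X,Y)\in\mathrm{IC}^{\uparrow}_r$.

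I expect no real obstacle here; the only point requiring a line of care is the admissibility bookkeeping — one must confirm that composing with $h$ (resp.\ with $F$ or $F^{-1}$) sends admissible increasing functions to admissible increasing functions, and in particular that the $\mathcal L^2$ membership is inherited in the right direction. This is where the assumption in~\eqref{item:cop:inc} that the marginals are \emph{strictly} increasing and continuous is used: it guarantees $F^{-1}$ is an honest increasing function (not merely a quantile function agreeing with the inverse up to null sets) so that the composition argument of part~\eqref{item:h:inc} applies cleanly in both directions. Since the paper already states these proofs are analogous to those of Proposition~\ref{lem:h} and Corollary~\ref{cor:copula} and omits them, a full write-up would be a half-page at most; the plan above is essentially the entire content.
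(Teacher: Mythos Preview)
Your proposal is correct and matches the paper's intended approach: the paper explicitly omits the proof, stating it is analogous to those of Proposition~\ref{lem:h} and Corollary~\ref{cor:copula}, and your write-up is exactly that analogy with the added (and necessary) observation that compositions of increasing functions are increasing. The admissibility bookkeeping you flag is handled correctly.
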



We first show in the following example that the property of invariant correlation under increasing transforms is not always equivalent to that under all transforms.

\begin{example}\label{ex:bernoulli}
Let $X$ and $Y$ be non-identically distributed bi-atomic random variables.
In contrast to the result in Proposition~\ref{pro:bi-atomic}, $(X,Y)$ always admits an invariant correlation under increasing transforms.
This is because the increasingness of $g$ implies that $g(X)$ and $g(Y)$ are increasing linear functions of $X$ and $Y$, respectively.
Hence, $(X,Y)\in \mathrm{IC}^{\uparrow}$ regardless of the dependence structure of $(X,Y)$. As a result, we have $\mathrm{IC}\subsetneq \mathrm{IC}^{\uparrow}$.
\end{example}

Except for the bi-atomic distributions, we show in the next theorem that $\mathrm{IC}_r$ and $\mathrm{IC}_r^{\uparrow}$ are equivalent. 

\begin{theorem}\label{thm:icincrease0}
Let $X,Y \in \mathcal L^2$ and $r=\corr(X,Y)$.
Assume that one of the following conditions holds:
(i) $r=0$; (ii) $X$ and $Y$ have identical distributions; (iii) $X$ and $Y$ have different distributions and $|\supp(Y)|>2$.
Then $(X,Y)\in \mathrm{IC}_r$   if and only if $(X,Y) \in \mathrm{IC}^{\uparrow}_r$.  In particular, if (iii) holds, then $r=0$.

\end{theorem}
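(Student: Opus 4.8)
The plan is to reduce each case to results already established for $\mathrm{IC}_r$ (Section~\ref{sec:3}) by showing that membership in $\mathrm{IC}^{\uparrow}_r$ is in fact no weaker under the stated hypotheses. The inclusion $\mathrm{IC}_r \subseteq \mathrm{IC}^{\uparrow}_r$ is automatic, so in all three cases only the reverse direction requires work. The key tool is that for most random variables of interest, an arbitrary measurable admissible $g$ can be ``reconstructed'' from increasing transforms — either directly, or via the closure property in Corollary~\ref{cor:ic:inc:basic:properties}\eqref{item:h:inc}, which lets us first pass to an increasing transform $h(X),h(Y)$ staying inside $\mathrm{IC}^{\uparrow}_r$ and then argue there.

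First I would handle case (ii), identical distributions, which I expect to be the conceptual heart. Suppose $F_X = F_Y$ and $(X,Y)\in\mathrm{IC}^{\uparrow}_r$. I want to conclude $(X,Y)$ is quasi-$r$-Fr\'echet, so that Theorem~\ref{thm:main:sec:4} gives $(X,Y)\in\mathrm{IC}_r$. The idea is to retrace the arguments behind Proposition~\ref{prop:ic:finite:identical} and its discrete/continuous extensions, but using only increasing test functions. Concretely: given a target pair of values $x_i < x_j$ in the common support, choose an increasing step function $g$ that collapses everything below $x_i$, everything between, and everything above, producing a tri-atomic (or bi-atomic) identically distributed pair $(g(X),g(Y))\in\mathrm{IC}^{\uparrow}$ by Corollary~\ref{cor:ic:inc:basic:properties}\eqref{item:h:inc}; applying further increasing transforms $z\mapsto az+b$ that reweight the three atoms and computing $\corr$ via the quadratic-form formula $\bx^\top(P-\bp\bp^\top)\bx/\bx^\top(D-\bp\bp^\top)\bx$, one forces the identity $\tfrac12(P+P^\top) = rD + (1-r)\bp\bp^\top$ on every such $3\times 3$ block. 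This is exactly the content one needs for quasi-$r$-Fr\'echet, and one sweeps over all triples of support points, then passes to general $F$ by approximation exactly as Theorem~\ref{thm:main:sec:4} does from the atomic case. The point is that increasing step functions and increasing affine maps already span enough of the test-function space to pin down the symmetrized law — for three atoms the relevant linear span of $\{\bx, \bx^{(2)},\dots\}$ over increasing vectors $z_1\le z_2\le z_3$ is still two-dimensional modulo constants, which is all the original proof used.

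Next, case (i), $r=0$: I would show $\mathrm{IC}^{\uparrow}_0 = \mathrm{IC}_0$, i.e.\ that invariance under increasing transforms alone already forces quasi-independence. The same collapsing-to-atomic-blocks device applies: for any Borel sets that are up-sets (half-lines $(-\infty,a]$ versus $(a,\infty)$), increasing indicator-type step functions let us reach every $2\times 2$ corner, and the condition $\corr=0$ after every increasing affine reweighting of a tri-atomic block forces the anti-symmetric decomposition of Proposition~\ref{cor:n:atomic}(ii)–(iii) at that block; summing over blocks recovers~\eqref{eq:quasi:2} for generating half-lines, hence for all Borel sets by a $\pi$–$\lambda$ argument (Lemma~\ref{lem:quasi} in Appendix~\ref{app:2}), so $(X,Y)$ is quasi-independent and Theorem~\ref{thm:ch:zero:ic} applies. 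Then case (iii), different distributions with $|\supp(Y)|>2$: here I would mimic Theorem~\ref{thm:main:sec:3}'s argument verbatim but with increasing transforms. Using increasing $h$ from Corollary~\ref{cor:ic:inc:basic:properties}\eqref{item:h:inc} one reduces to bi-atomic or tri-atomic $(X,Y)$ with $\supp(X)\ne\supp(Y)$ (the ``different distributions, some support point separating them'' configuration survives an increasing collapse), and the tri-atomic analysis (Lemma~\ref{pro:tri}-type computations, now with increasing test functions only) shows the correlation must be $0$; combined with case (i) this yields $(X,Y)\in\mathrm{IC}_0$ and in particular $r=0$.

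The main obstacle I anticipate is verifying that the atomic-reduction lemmas behind Propositions~\ref{prop:ic:finite:identical} and \ref{pro:bi-atomic} and Theorem~\ref{thm:main:sec:3} genuinely go through when the class of test functions is cut down to increasing ones — specifically, that one never needed a non-monotone $g$ to separate or permute atoms in those proofs. For identical marginals this is fine because increasing maps suffice to collapse the support into ordered atomic blocks and then affinely reweight them, and the quadratic-form identities used only increasing vectors $z$. The genuinely delicate point is the bi-atomic case: Example~\ref{ex:bernoulli} shows increasing-invariance is vacuous for non-identically-distributed bi-atomic pairs, which is precisely why case (iii) requires the hypothesis $|\supp(Y)|>2$ and why the theorem's exception is exactly the bi-atomic one. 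I would therefore be careful to isolate, in each reduction, the step where at least three atoms (or identical marginals) is what guarantees increasing transforms carry enough information, and to check that the approximation passing from atomic to general $X,Y$ (truncation plus discretization by increasing step functions) stays within the increasing class throughout.
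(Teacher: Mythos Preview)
Your proposal is correct and follows essentially the same strategy as the paper: reduce to atomic blocks via increasing step functions, verify that the quadratic-form identities behind Proposition~\ref{prop:ic:finite:identical} and the tri-atomic lemmas still hold when the test vectors $\bz$ are restricted to the increasing cone, and then invoke the existing characterizations (Theorems~\ref{thm:ch:zero:ic}, \ref{thm:main:sec:3}, \ref{thm:main:sec:4}). Two small remarks: for case~(i) the paper proceeds more directly than you sketch --- it simply tests $g=\id_{\{\,\cdot\,\le a\}}$ and $g=\id_{\{\,\cdot\,\le a\}}+\id_{\{\,\cdot\,\le b\}}$ to obtain quasi-independence on half-lines, bypassing the tri-atomic detour; and for case~(ii) the cleanest way to pass from ``$\bz^\top A\bz=0$ for all strictly increasing $\bz$'' to $A=O$ is not a span argument but the perturbation $\bz\mapsto\bz+\epsilon\mathbf e_j$, which stays increasing for small $\epsilon$ and isolates each column of $A$.
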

Theorem \ref{thm:icincrease0} also justifies our focus on $\mathrm{IC}$ throughout the paper, although in many applications relevant transforms are confined to be increasing.
\begin{remark}
By Item~(\ref{item:cop:inc}) in Corollary~\ref{cor:ic:inc:basic:properties}, the result  in Corollary~\ref{cor:ic:cont:identical}  remains valid with $\mathrm{IC}_r$ in the statements replaced by $\mathrm{IC}^{\uparrow}_r$.
\end{remark}



Under the condition~(iii), Theorem \ref{thm:icincrease0} further implies that, for $r\ne 0$, 
 there is no model $(X,Y)$ in $ \mathrm{IC}_r^{\uparrow}$ or $ \mathrm{IC}_r$.



\section{Concluding remarks}\label{sec:conclusion} 

Our main results on a full characterization of models with invariant correlation can be briefly summarized below. Except for the very special case of bi-atomic distributions, invariant correlation is characterized by  quasi-independence (Theorem \ref{thm:ch:zero:ic}) and quasi-Fr\'echet models (Theorem~\ref{thm:main:sec:4}), with non-identical marginal distributions excluded unless the correlation is zero (Theorem \ref{thm:main:sec:3}). 
The same holds true when transforms are confined to be increasing (Theorem \ref{thm:icincrease0}). 
We also identify the set of all compatible  invariant correlation matrices (Theorem \ref{thm:ch:mat}).   

Several aspects and generalizations of invariant correlation require further research. For instance, it would be interesting to understand whether a higher dimensional correlation measure, instead of the matrix of bivariate correlations, can be used to naturally formulate an invariant correlation property, and whether it has interesting implications similar to our results in the bivariate case. 
Since PRD and PRDS are dependence concepts that are not determined by their bivariate marginals, a higher dimensional notion of invariant correlation may be more naturally connected to PRD and PRDS. 
Although a connection of invariant correlation to multiple testing and FDR control has been briefly discussed in Appendix~\ref{app:conformal:pvalues},  its applications and relevance for statistics are not yet clear. 
A further question concerns whether restricting the marginal transforms to a smaller but practically relevant class would characterize different and   potentially useful models. These questions require further  studies.

\section*{Acknowledgements}
We are grateful to the Editor and an Associate Editor for their valuable comments.
Takaaki Koike was supported by JSPS KAKENHI Grant Numbers JP21K13275 and JP24K00273. Ruodu Wang acknowledges financial support from the Natural Sciences and Engineering Research Council of Canada (RGPIN-2024-03728) and Canada Research Chairs (CRC-2022-00141).

\appendix

\section*{Appendices}

\section{Auxiliary results}\label{app:auxiliary}

\subsection{Invariant correlation matrices and other dependence matrices}\label{app:other:measures}

In this section, we summarize the connection of $\Theta_d$, the set of all possible $d \times d$ invariant correlation matrices of continuous random vectors with identical marginals, to
the collection of tail-dependence matrices and other dependence matrices.
Let  $(X_1,X_2)$ be a bivariate random vector with continuous marginal distributions $F_1$ and $F_2$.
The {(lower) tail-dependence coefficient} of $(X_1,X_2)$ is defined by 
$\lambda=\lim_{u\downarrow 0}\p\left(F_1(X_1)\le u,F_2(X_2)\le u\right)/u$
given that the limit exists.
For a function $g:\R\rightarrow \R$, the {$g$-transformed rank correlation} of $(X_1,X_2)$ is defined by
$
\kappa_g(X_1,X_2)=\corr(g(F_1(X_1)),g(F_2(X_2))),
$
provided that it is well defined. 
The {tail-dependence matrix} of a $d$-dimensional random vector $\mathbf{X}=(X_1,\dots,X_d)$ with continuous marginal distributions has the tail-dependence coefficient of $(X_i,X_j)$ in its $(i,j)$th entry for  $i,j\in[d]$. 
Analogously, the $\kappa_g$-matrix of $\mathbf{X}$ is a $d \times d$ matrix whose $(i,j)$th entry is $\kappa_g(X_i,X_j)$ for $i,j\in[d]$.
The set of all possible $d\times d$ tail-dependence matrices is called the {tail-dependence compatibility set} and is denoted by $\mathcal T_d$; see~\cite{EHW16,FSS17,KSSW18,ST20} for recent studies.
Similarly, the $d$-dimensional {compatibility set} of $\kappa_g$, denoted by $\mathcal K_d(g)$, is the collection of all possible $\kappa_g$-matrices in dimension $d$.


The next proposition shows that, for $\mathbf X$ with identical continuous marginals and an invariant correlation matrix $R$, the model $\mathbf X$ has the same tail-dependence matrix and  $\kappa_g$-matrix $R$.

\begin{proposition}\label{prop:tdm}
Let $\bX$ be a $d$-dimensional continuous random vector with identical marginals.
If $\bX\in \mathrm{IC}^d$, then its correlation matrix and its tail-dependence matrix coincide, as well as its $\kappa_g$-matrices for any $g$ admissible for the standard uniform distribution.
\end{proposition}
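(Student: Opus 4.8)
The plan is to reduce everything to the bivariate characterization in Corollary~\ref{cor:ic:cont:identical}, applied pairwise. Since $\bX$ has identical continuous marginals, say $F$, and $\bX\in \mathrm{IC}^d$, every pair $(X_i,X_j)$ lies in $\mathrm{IC}_{r_{ij}}$ for some $r_{ij}\in[0,1]$ (the non-negativity in the continuous case is part of Corollary~\ref{cor:ic:cont:identical}), and its copula $C_{ij}$ satisfies the symmetrized mixture identity $(C_{ij}(u,v)+C_{ji}(u,v))/2 = r_{ij}M(u,v)+(1-r_{ij})\Pi(u,v)$ for all $(u,v)\in\I^2$. The whole proposition is then just the statement that this symmetrized mixture structure forces the tail-dependence coefficient and each $\kappa_g$-value of that pair to equal $r_{ij}$.

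First I would handle the tail-dependence coefficient. Fix a pair $(i,j)$ and abbreviate $C=C_{ij}$, $C'=C_{ji}$, $r=r_{ij}$. The lower tail-dependence coefficient of $(X_i,X_j)$ is $\lambda = \lim_{u\downarrow0} C(u,u)/u$ provided the limit exists; but $C(u,u)=C'(u,u)$ since evaluating a copula on the diagonal is symmetric in its two arguments' roles only trivially — more carefully, $C(u,u)$ and $C'(u,u)$ need not be equal, so instead I average: $(C(u,u)+C'(u,u))/2 = rM(u,u)+(1-r)\Pi(u,u) = ru + (1-r)u^2$. Dividing by $u$ and letting $u\downarrow0$ gives $(\lambda + \lambda')/2 = r$ where $\lambda,\lambda'$ are the tail coefficients of $(X_i,X_j)$ and $(X_j,X_i)$; but these are literally the same number ($\lambda$ is defined symmetrically via $\p(F_i(X_i)\le u, F_j(X_j)\le u)$, which is unchanged under swapping $i,j$), so $\lambda = r$. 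I should also note the limit exists because $(C(u,u)+C'(u,u))/2 = ru+(1-r)u^2$ is an exact identity and $C(u,u),C'(u,u)\in[W(u,u),M(u,u)]=[\max(2u-1,0),u]$ pin each down near $0$; a short squeeze argument gives $C(u,u)/u\to r$ directly, which is cleanest.

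Next, the $\kappa_g$-matrix. Here $\kappa_g(X_i,X_j)=\corr(g(F(X_i)),g(F(X_j)))=\corr(g(U_i),g(U_j))$ where $(U_i,U_j)\sim C_{ij}$. Since $g$ is admissible for the standard uniform, $g(U_i),g(U_j)\in\mathcal L^2$, and the random rearrangement of $(U_i,U_j)$ has copula $C_r^{\mathrm F}$ by Corollary~\ref{cor:ic:cont:identical}(iii); equivalently the positive Fréchet model means $(U_i,U_j)\laweq (U,U)\id_A + (U^\perp_1,U^\perp_2)\id_{A^c}$ after random rearrangement, with $\p(A)=r$. By permutation-invariance of correlation, $\corr(g(U_i),g(U_j))$ equals the correlation computed under the random rearrangement, and then the Example~\ref{ex:frechet} computation (with $d=2$, marginal the standard uniform, the function $g$) gives $\corr(g(U_i),g(U_j)) = \p(A) = r = r_{ij}$. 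Assembling over all pairs $(i,j)$ yields that the tail-dependence matrix and every $\kappa_g$-matrix equal the correlation matrix $R$.

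The main obstacle is a bookkeeping subtlety rather than a deep one: the bivariate identity controls only the \emph{symmetrized} copula $(C_{ij}+C_{ji})/2$, not $C_{ij}$ itself, so at each step I must either restrict attention to quantities that are intrinsically symmetric under $i\leftrightarrow j$ (the tail-dependence coefficient and the Pearson correlation both are) or pass through the random rearrangement before computing. For the tail coefficient I would present the squeeze argument to simultaneously establish existence of the limit and its value; for $\kappa_g$ I would invoke permutation-invariance of $\corr$ plus the Example~\ref{ex:frechet} calculation applied to the rearranged pair. Everything else is routine.
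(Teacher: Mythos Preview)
Your tail-dependence argument is essentially the paper's proof verbatim: symmetrize, use Proposition~\ref{prop:high-d}, and pass to the limit. One minor hiccup: your hesitation ``$C(u,u)$ and $C'(u,u)$ need not be equal'' is unwarranted, since $C_{ij}(u,u)=\p(U_i\le u,U_j\le u)=\p(U_j\le u,U_i\le u)=C_{ji}(u,u)$ always holds on the diagonal; this is exactly why $\lambda_{ij}=\lambda_{ji}$, and it makes the averaging step trivial and the existence of the limit immediate. For the $\kappa_g$-matrix, your route through the random rearrangement and Example~\ref{ex:frechet} is correct but longer than necessary. The paper dismisses this part as obvious because it follows in one line from the very definition of invariant correlation: with identical marginal $F$, the map $h=g\circ F$ is admissible for $(X_i,X_j)$ (since $F(X_i)$ is standard uniform and $g$ is admissible for the uniform), so $(X_i,X_j)\in\mathrm{IC}_{r_{ij}}$ gives $\kappa_g(X_i,X_j)=\corr(h(X_i),h(X_j))=r_{ij}$ directly, with no need to pass to the rearranged pair or invoke the Fr\'echet representation.
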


 Proposition~\ref{prop:tdm} implies that $\Theta_d\subset \mathcal T_d$ and that $\Theta_d\subset \mathcal K_d(g)$ for wide varieties of $g$.
For instance, $\Theta_d$ is a subset of the compatibility sets of Blomqvist's beta and Spearman's rho, studied by \cite{HK19} and~\cite{WWW19}, respectively. 
Since the compatibility set of Blomqvist's beta is equal to that of {Kendall's tau}~\citep{MNS22}
and is smaller than that of {Gini's gamma}~\citep{KH22}, our result implies that
$\Theta_d$ is contained in all of these compatibility sets. 
 The connection of the clique partition polytope to $\mathcal T_d$ is given in Proposition~22 of~\cite{FSS17}.
  This result, together with Theorem~\ref{thm:ch:mat} and Proposition~\ref{prop:tdm},
  leads to the following relationship between $\Theta_d$ and $\mathcal T_d$: 
      $\Theta_d=\mathcal T_d$ for $d\le 4$, and  $\Theta_d\subsetneq\mathcal T_d$ for $d\ge  5$.

\subsection{Conformal p-values}\label{app:conformal:pvalues}


Conformal p-value, studied by \cite{BCLRS23} in the context of outlier detection, is an interesting example of invariant correlation.
Let $S_1,\dots,S_{n}$ be scores of the null training sample (computed from some score function on the data), where  $n$ is a fixed   positive integer. 
Scores of the test sample are denoted by  $S_{n+i}$, $i\in [d]$, and the corresponding conformal p-value is given by
\begin{align}\label{eq:conform}
P_i=\frac{1}{n+1}+\frac{1}{n+1}\sum_{k=1}^n \id_{\{S_k\le S_{n+i}\}}.
\end{align}
Assume that $\{S_i:i\in [n+d]\}$ is a sequence of independent random variables and $S_k\sim F_0$, $k\in [n]$.
The above p-values are testing the sequence of null hypotheses H$_{0,i}:S_{n+i}\sim F_0$, $i\in [d]$. 

\begin{proposition}\label{prop:conformal}
    Let $\mathcal N\subseteq [d]$ be any set of null indices, that is, \rm{H}$_{0,i}$ is true for $i\in \mathcal N$.
        Then,
    \begin{align*}
        \p\left(P_i=\frac{j_i}{n+1},~i\in \mathcal N\right)=\frac{N_1!\cdots N_{n+1}!}{(n+m)(n+m-1)\cdots(n+1)},
        \quad
        j_i \in [n+1],
    \end{align*}
    where $m=|\mathcal N|$ and $N_j=|\{i\in \mathcal N: j_i=j\}|$, $j\in[n+1]$. 
    In particular, $P_i$ is uniformly distributed on $[n+1]/(n+1)$ for every $i\in \mathcal N$.
    Moreover,  $(P_i)_{i\in \mathcal N}$   has the invariant correlation matrix $(r_{ij})_{m\times m}$ such that $r_{ij}=1/(n+2)$ for every $i,j\in [m]$, $i\neq j$.
\end{proposition}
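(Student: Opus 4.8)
\textbf{Proof proposal for Proposition~\ref{prop:conformal}.}

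The plan is to first establish the joint distribution of the null conformal p-values by a symmetry (exchangeability) argument, and then read off the marginal law and the pairwise correlations from that formula. Fix the set of null indices $\mathcal N$ with $|\mathcal N|=m$. Since H$_{0,i}$ holds for $i\in\mathcal N$, the collection $\{S_k:k\in[n]\}\cup\{S_{n+i}:i\in\mathcal N\}$ consists of $n+m$ i.i.d.\ draws from $F_0$; by independence I may assume $F_0$ is continuous (ties have probability zero, and the quantities $P_i$ depend only on the relative ranks of the scores, so the atomic case follows by the usual perturbation/limiting argument, or simply because the statement concerns only ordinal comparisons). Then the rank vector of $(S_1,\dots,S_n,(S_{n+i})_{i\in\mathcal N})$ is uniform over the $(n+m)!$ permutations of $[n+m]$. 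Writing $R_i$ for the rank of $S_{n+i}$ among the $n+1$ values $\{S_1,\dots,S_n,S_{n+i}\}$, one has $P_i=R_i/(n+1)$, and the event $\{P_i=j_i/(n+1),\ i\in\mathcal N\}$ says that exactly $j_i-1$ of the $n$ training scores fall below $S_{n+i}$.

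\medskip
\noindent\textbf{Step 1: the joint pmf.} I would compute $\p(P_i=j_i/(n+1),\ i\in\mathcal N)$ by a direct counting argument on the uniform random ordering of the $n+m$ exchangeable scores. Think of building the ordering by inserting the $m$ test scores into the ordered list of the $n$ training scores (or, equivalently, count linear arrangements). The constraint $P_i=j_i/(n+1)$ pins down, for each test score, how many training scores precede it; grouping the test indices by their common value $j$ of $j_i$ gives the multiplicities $N_j=|\{i\in\mathcal N:j_i=j\}|$. A clean way is: condition on the relative order of the $n$ training scores (which is uniform and irrelevant), then the positions of the $m$ test scores among the $n+1$ "gaps'' must be: $N_j$ of them in gap $j$ for each $j\in[n+1]$. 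The number of orderings of the $m$ test scores consistent with this is $\prod_{j=1}^{n+1}N_j!$ (the test scores assigned to the same gap may be permuted among themselves), out of $(n+1)(n+2)\cdots(n+m)$ total ways to sequentially insert $m$ items into a list that grows from length $n$ to $n+m$. This yields
\begin{align*}
\p\!\left(P_i=\frac{j_i}{n+1},\ i\in\mathcal N\right)=\frac{\prod_{j=1}^{n+1}N_j!}{(n+1)(n+2)\cdots(n+m)}=\frac{N_1!\cdots N_{n+1}!}{(n+m)(n+m-1)\cdots(n+1)},
\end{align*}
which is the claimed formula. I expect this combinatorial bookkeeping --- getting the normalizing constant and the $N_j!$ factors exactly right --- to be the main (though routine) obstacle.

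\medskip
\noindent\textbf{Step 2: marginals and pairwise law.} Setting $m=1$ gives $\p(P_i=j/(n+1))=1/(n+1)$ for every $j\in[n+1]$, i.e.\ $P_i$ is uniform on $[n+1]/(n+1)$; in particular $\E[P_i]$ and $\var(P_i)$ are the mean and variance of a discrete uniform on $\{1,\dots,n+1\}$ scaled by $1/(n+1)$, namely $\E[P_i]=\tfrac12$ and $\var(P_i)=\tfrac{1}{12}\bigl(1-\tfrac{1}{(n+1)^2}\bigr)$. Setting $m=2$ with indices $i\neq j$ gives, for $a,b\in[n+1]$, $\p(P_i=a/(n+1),P_j=b/(n+1))=\frac{N_1!\cdots N_{n+1}!}{(n+1)(n+2)}$, which equals $\frac{2}{(n+1)(n+2)}$ when $a=b$ and $\frac{1}{(n+1)(n+2)}$ when $a\neq b$. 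From this bivariate pmf I would compute $\E[P_iP_j]$ directly; alternatively, observe that the pair $(P_i,P_j)$ has identical uniform marginals and is exchangeable, so by Corollary~\ref{cor:ic:cont:identical} (or Theorem~\ref{thm:main:sec:4}) it suffices to identify the mixing parameter $r$, and $r=\p(P_i=P_j)-\p(P_i=P_j\mid \text{independent})$ type reasoning pins it down --- concretely, since the ``comonotone'' part contributes mass on the diagonal, $\sum_{a}\p(P_i=a/(n+1),P_j=a/(n+1))=\frac{2(n+1)}{(n+1)(n+2)}=\frac{2}{n+2}$, whereas independence would give $\frac{n+1}{(n+1)^2}=\frac{1}{n+1}$, and solving $r\cdot 1+(1-r)\cdot\frac{1}{n+1}=\frac{2}{n+2}$ gives $r=\frac{1}{n+2}$.

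\medskip
\noindent\textbf{Step 3: invariant correlation matrix.} Finally, the full vector $(P_i)_{i\in\mathcal N}$ has identical uniform marginals, and Step~1 shows every bivariate margin $(P_i,P_j)$ is exchangeable with the $r$-Fr\'echet-type law above with $r=1/(n+2)$; more conceptually, $(P_i)_{i\in\mathcal N}$ is itself exchangeable (permuting null test indices permutes i.i.d.\ scores), so each pairwise margin lies in $\mathrm{IC}_{1/(n+2)}$ by Corollary~\ref{cor:ic:cont:identical} --- I would verify \eqref{eq:ic:cont:mixture:m:pi}, or rather its discrete analog \eqref{eq:ic:infinite:mixture:m:pi}, holds with $r_{ij}=1/(n+2)$ using the explicit bivariate pmf from Step~2. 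Hence $(r_{ij})_{m\times m}$ with $r_{ij}=1/(n+2)$ for $i\neq j$ (and $1$ on the diagonal) is an invariant correlation matrix of $(P_i)_{i\in\mathcal N}$, completing the proof. The one point requiring a remark is the reduction to continuous $F_0$: since all conformal p-values are deterministic functions of the rank statistics, their joint law is the same as in the continuous case regardless of $F_0$, so no generality is lost.
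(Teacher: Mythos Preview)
Your proof is correct and arrives at all the same conclusions, but your derivation of the joint pmf in Step~1 is genuinely different from the paper's. The paper conditions on the training sample $\mathcal D=\{S_1,\dots,S_n\}$, uses conditional independence of the test scores to write $\p(P_i=j_i/(n+1)\mid\mathcal D)=\prod_{i\in\mathcal N}(S_{(j_i)}-S_{(j_i-1)})$, reduces to standard uniform $F_0$, identifies the spacings $(T_1,\dots,T_{n+1})$ as $\mathrm{Dirichlet}(\mathbf 1_{n+1})$, and then evaluates $\E[T_1^{N_1}\cdots T_{n+1}^{N_{n+1}}]$ via the Beta-function moment formula. Your route---uniform random ranks by exchangeability, then the sequential-insertion count giving $\prod_j N_j!$ favourable insertions out of $(n+1)(n+2)\cdots(n+m)$---is more elementary in that it avoids the Dirichlet machinery entirely, and it makes the combinatorial structure transparent. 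The paper's approach, on the other hand, makes the conditional-independence structure explicit and would adapt more readily to variants where spacings are not exchangeable (e.g., weighted conformal scores). For Step~3, you and the paper do essentially the same thing: specialize to $m=2$, recognize the bivariate pmf as the exchangeable discrete model~\eqref{eq:ic:infinite:mixture:m:pi} with $r=1/(n+2)$, and conclude invariant correlation. One small caveat: your parenthetical about handling atomic $F_0$ by ``perturbation/limiting'' is not quite right---the stated formula requires no ties, so continuity of $F_0$ is genuinely needed; the paper also glosses over this.
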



We can take $\mathcal N$ to be the set of all null indices in Proposition \ref{prop:conformal}. 
The fact that $(P_i)_{i\in \mathcal N}$ has an invariant correlation matrix has been shown in Lemma 2.1 of \cite{BCLRS23}, which also contains the joint distribution of $(P_i, P_j)$ for $i,j\in \mathcal N$. 
Proposition \ref{prop:conformal} further gives the joint distribution of  $(P_i)_{i\in \mathcal N}$. 
On the other hand, the full vector of p-values, $(P_i)_{i\in [d]}$  does not have an invariant correlation matrix in general. Note that the marginal distributions of $(P_i)_{i\in [d]}$ are generally different for null and non-null components, and there is some positive correlation between them by design in \eqref{eq:conform}. Hence, our results in Section \ref{sec:3} explain that invariant correlation for $(P_i)_{i\in [d]}$  is not possible.
Nevertheless, the vector of conformal p-values has PRDS on $\mathcal N$ as shown by Theorem~2.4 of \cite{BCLRS23}.

For $m=2$, the vector of null conformal p-values follows the model~\eqref{eq:discrete} with $k=n+2$, $g(x)=\lfloor(n+1)x\rfloor/(n+1)$ and $\mathbf{Z}_1,\mathbf{Z}_2$ being iid multinomials with the number of trials $1$ and uniform event probabilities on $[k]$.
It is left open whether the vector of null conformal p-values can be written of the form~\eqref{eq:discrete} for $m\ge 3$.



\subsection{Invariant correlaton and concepts of dependence}\label{app:auxiliary:prd}

In Section~\ref{sec:PRD}, we have explored the relationship between invariant correlation and positive regression dependence (PRD).
In this section, we show by examples that models with invariant correlation matrix do not always have PRD. 

We start from the bivariate case $d=2$.
Following~\cite{L66}, a random vector $(X,Y)$ is called {positive quadrant dependent (PQD)} (also called {positive orthant dependent}), if $\p(X\le x,Y\le y)\ge \p(X\le x)\p(Y\le y)$ for every $(x,y)\in \R^2$.
{Negative quadrant dependence (NQD)} is analogously defined by $\p(X\le x,Y\le y)\le \p(X\le x)\p(Y\le y)$ for every $(x,y)\in \R^2$.
It is shown in Lemma~4 of~\cite{L66} that PRD implies PQD.
In the next example, we construct a random vector that has zero invariant correlation but is neither PQD nor NQD.
This example particularly indicates that non-negative invariant correlation does not lead to PRD.

\begin{example}\label{ex:not:prd:2}
Let $(X,Y)$ be the model in Example~\ref{cor:tri0} with the identical marginal support $[3]$, $p_i=q_i=1/3$ for $i \in [3]$ and $\epsilon = 1/9$.
Then $\p(X\le 2,Y\le 1)=1/9$, $\p(X\le 1,Y\le 2)=1/3$ and $\p(X\le 1)\p(Y\le 2)=\p(X\le 2)\p(Y\le 1) = 2/9$. Hence $(X,Y)$ is neither PQD nor NQD.
\end{example}

We have seen in Section~\ref{sec:PRD} that, for the bivariate case, invariant correlation implies PRD under the additional assumption of exchangeability.
The next example shows that this is not the case for $d= 3$, that is, there exists an exchangeable model which admits a non-negative invariant correlation matrix but does not have PRD.

\begin{example}\label{ex:not:prds}
Let $d=3$ and consider a member of the Farlie-Gumbel-Morgenstein family of copulas:
 \begin{align*}
C_{\theta}(u_1,u_2,u_3)=u_1 u_2 u_3 + \theta u_1 u_2 u_3(1-u_1)(1-u_2)(1-u_3),\quad \theta \in [-1,1].
 \end{align*}
 Note that $C_\theta$ is an exchangeable copula for any $\theta \in [-1,1]$.
 Moreover, all bivariate marginals of $C_\theta$ are $\Pi$, and thus it has an invariant correlation matrix $I_3$.

  We will show that the model $\mathbf{U}\sim C_{\theta}$ does not satisfy the following weaker version of PRDS than that in Definition~\ref{def:prds}:
 \begin{align*}
s\mapsto \p(\mathbf{U}\in A\mid U_i\le s)\text{ is increasing in $s$ for any index $i\in \mathcal N$ and increasing set $A\subseteq \mathbb{R}^d$}.
 \end{align*}  
To this end, let $i=1$, $s \in [0,1]$, $\mathbf{t}=(t_1,t_2,t_3)\in[0,1]^3$, $t_1=0$, and  $A=(t_1,\infty)\times (t_2,\infty)\times (t_3,\infty)$.
Then $A$ is an increasing set and
\begin{align*}
\p(\mathbf{U} \in A\mid U_1 \le s) 
&=\p(\mathbf{U} > \mathbf{t}\mid U_1 \le s) \\
&= 
\frac{1}{s}
\p(U_1 \le s\,,t_2 < U_2,\, t_3 < U_3)\\
&= \frac{1}{s}\left\{
(1-t_2)(1-t_3) - 
\overline C(s,t_2,t_3)
\right\},
\end{align*}
where $\overline C(\mathbf{u})= \p(\mathbf{U}>\mathbf{u})$ is the joint survival function of $C$.
By calculation,
we have that
$
\p(\mathbf{U}\in A\mid U_1 \le s) 
= 1-t_2 - t_3 + C(s,t_2,t_3)/s.$
Therefore,
\begin{align*}
\frac{\partial}{\partial s}\p(\mathbf{U} \in A \mid U_1 \le s)
 = \frac{s\, \partial_1 C(s,t_2,t_3) - C(s,t_2,t_3)}{s^2}.
 \end{align*}
  When $t_2=t_3 = 0.5$ and $\theta = 1$, this derivative is a constant $-1/16$ by a simple calculation.
Therefore, $\p(\mathbf{U} \in A \mid U_1 \le s)$ is a decreasing function in $s\in [0,1]$.
\end{example}

\section{Proofs of all results}
\subsection{Proofs in Section \ref{sec:2}}\label{app:2}

\begin{proof}[\textbf{\upshape Proof of Proposition~\ref{lem:h}}]
Let $(X,Y)\in \mathrm{IC}_r$ and  $g$ be an admissible function for $(h(X),h(Y))$.
The function $g\circ h$ is admissible for $(X,Y)$.
Since $(X,Y)$ has an invariant correlation $r$, we have 
$
\corr(g\circ h(X),g\circ h(Y))=\corr(X,Y)=r.
$
Since an admissible function $g$ is arbitrary, we have that $(h(X),h(Y))\in \mathrm{IC}_r$.
Applying this observation to all pairs of components of $(X_1,\dots,X_d)$ establishes the desired result. 
\end{proof}

\begin{proof}[\textbf{\upshape Proof of Corollary \ref{cor:copula}}]
Suppose that $\mathbf X\in \mathrm{IC}^d$ with continuous and strictly increasing marginals $F$.
By taking $h=F$ in Proposition~\ref{lem:h}, we have that $(F(X_1), \dots, F(X_n))\in \mathrm{IC}^d$; hence $C \sim \mathrm{IC}^d$.
Suppose next that $C \sim \mathrm{IC}^d$. There exists a uniform random vector $\mathbf U =(U_1, \dots, U_n)\in \mathrm{IC}^d$. Let $\mathbf X=(F^{-1}(U_1), \dots, F^{-1}(U_n))$. 
By taking $h=F^{-1}$ in Proposition~\ref{lem:h}, we have that $\mathbf X \in \mathrm{IC}^d$.
\end{proof}

The next lemma justifies that the formulations of quasi-independence via \eqref{eq:quasi} and \eqref{eq:quasi:2}
are equivalent. 

\begin{lemma}\label{lem:quasi}
    For a random vector $(X,Y) \sim H$ with marginals $F$ and $G$, $(X,Y)$ is quasi-independent if and only if  \eqref{eq:quasi:2} holds for all $A,B\in \mathcal B(\R)$.
\end{lemma}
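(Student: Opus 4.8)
The statement to prove is Lemma~\ref{lem:quasi}: for $(X,Y)\sim H$ with marginals $F$ and $G$, the identity \eqref{eq:quasi} holds for all $x,y\in\R$ if and only if \eqref{eq:quasi:2} holds for all $A,B\in\mathcal B(\R)$. The plan is to run a standard measure-theoretic bootstrapping argument (a Dynkin $\pi$--$\lambda$ / monotone class argument) in both directions, the nontrivial direction being ``\eqref{eq:quasi} $\Rightarrow$ \eqref{eq:quasi:2}''.

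\textbf{Easy direction.} First I would note that \eqref{eq:quasi:2} $\Rightarrow$ \eqref{eq:quasi} is immediate: take $A=(-\infty,x]$ and $B=(-\infty,y]$, so that $\p(X\in A,Y\in B)=H(x,y)$, $\p(X\in B,Y\in A)=H(y,x)$, $\p(X\in A)\p(Y\in B)=F(x)G(y)$, and $\p(X\in B)\p(Y\in A)=F(y)G(x)$; dividing by $2$ gives \eqref{eq:quasi}.

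\textbf{Hard direction.} For \eqref{eq:quasi} $\Rightarrow$ \eqref{eq:quasi:2}, I would introduce the finite signed measure $\nu$ on $\mathcal B(\R^2)$ defined by $\nu = \tfrac12(\mu + \mu^{\mathrm{t}}) - \tfrac12(\mu_F\otimes\mu_G + \mu_G\otimes\mu_F)$, where $\mu$ is the law of $(X,Y)$, $\mu^{\mathrm{t}}$ its ``transpose'' (the law of $(Y,X)$), and $\mu_F,\mu_G$ the marginals. Then \eqref{eq:quasi} says that the distribution-function-type evaluation of $\nu$ on all lower-left quadrants $(-\infty,x]\times(-\infty,y]$ vanishes, and \eqref{eq:quasi:2} is the assertion $\nu(A\times B)=0$ for all $A,B\in\mathcal B(\R)$. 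Since the quadrants $\{(-\infty,x]\times(-\infty,y]:x,y\in\R\}$ form a $\pi$-system generating $\mathcal B(\R^2)$, and $\nu$ is a \emph{signed} measure, I would pass to its Jordan decomposition $\nu=\nu^+-\nu^-$: the hypothesis forces $\nu^+$ and $\nu^-$ to agree on all such quadrants, hence (by uniqueness of finite measures agreeing on a generating $\pi$-system containing an exhausting sequence, e.g.\ $(-\infty,n]^2$) to agree on all of $\mathcal B(\R^2)$, so $\nu\equiv 0$ as a set function. In particular $\nu(A\times B)=0$ for all rectangles, which unwinds to exactly \eqref{eq:quasi:2}. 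One minor point to handle carefully is total finiteness of $\nu^{\pm}$: $\nu$ is a difference of probability measures, hence has total variation at most $2$, so the Jordan components are genuinely finite and the uniqueness theorem applies.

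\textbf{Main obstacle.} There is no deep obstacle; the only thing requiring care is the bookkeeping that distinguishes a signed measure from a positive one — one cannot directly invoke the uniqueness-of-measures theorem for $\nu$ itself, so the Jordan decomposition step is essential and should be stated explicitly. An alternative, perhaps cleaner, route I would consider is to avoid signed measures entirely: rewrite \eqref{eq:quasi} as $\tfrac12 H(x,y)+\tfrac12 H(y,x)+\tfrac12 F(x)G(y)+\tfrac12 F(y)G(x)$ appearing symmetrically, i.e.\ exhibit two \emph{probability} measures $\lambda_1=\tfrac12(\mu+\mu_G\otimes\mu_F)$ and $\lambda_2=\tfrac12(\mu^{\mathrm t}+\mu_F\otimes\mu_G)$ — wait, this does not quite symmetrize; more robustly, set $\lambda_1=\tfrac12(\mu+\mu^{\mathrm t})$ and $\lambda_2=\tfrac12(\mu_F\otimes\mu_G+\mu_G\otimes\mu_F)$, both probability measures, and observe \eqref{eq:quasi} says $\lambda_1$ and $\lambda_2$ have the same ``distribution function'' on quadrants; then apply the classical fact that a probability measure on $\R^2$ is determined by its values on lower-left quadrants to conclude $\lambda_1=\lambda_2$ as measures, whence $\lambda_1(A\times B)=\lambda_2(A\times B)$ for all $A,B$, which is \eqref{eq:quasi:2}. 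This second route is the one I would actually write up, as it uses only the uniqueness theorem for probability measures.
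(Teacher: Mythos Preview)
Your proposal is correct. Both routes you outline work; the second (defining the two probability measures $\lambda_1=\tfrac12(\mu+\mu^{\mathrm t})$ and $\lambda_2=\tfrac12(\mu_F\otimes\mu_G+\mu_G\otimes\mu_F)$ and invoking the fact that a probability measure on $\R^2$ is determined by its distribution function) is clean and is indeed the one to write up.

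Your approach differs from the paper's. The paper works one coordinate at a time: for fixed $B=(-\infty,y]$ it verifies by hand that the set $\{A\in\mathcal B(\R):\text{\eqref{eq:quasi:2} holds for }A,B\}$ is a $\lambda$-system containing the $\pi$-system $\{(-\infty,x]:x\in\R\}$, concludes it equals $\mathcal B(\R)$, and then repeats the argument in the second coordinate. In contrast, you package the problem as equality of two measures on $\R^2$ and cite the standard uniqueness result once. Your route is shorter and more conceptual, trading an explicit $\lambda$-system verification for an appeal to a named theorem; the paper's route is more self-contained, never leaving $\mathcal B(\R)$ and spelling out each closure property. Both ultimately rest on the Dynkin $\pi$--$\lambda$ theorem, so the distinction is one of packaging rather than of substance.
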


\begin{proof}[\textbf{\upshape Proof of Lemma \ref{lem:quasi}}]
It is clear that   \eqref{eq:quasi:2} implies quasi-independence. We will show the ``only if" part.  
Assume that $(X,Y)$ is quasi-independent. For $B \in \mathcal{B}(\R)$, define 
$\mathcal{D}_B(X,Y)=\{A\in\mathcal{B}(\R):\eqref{eq:quasi:2}~ \mbox{holds for } A,B\}.$
In what follows, we check that $\mathcal{D}_B(X,Y)$ is a $\lambda$-system for any $B\in\mathcal{B}(\R)$.
\begin{enumerate}
\item It is clear that $\R\in \mathcal{D}_B(X,Y)$.
\item Let $A_1, A_2\in \mathcal{D}_B(X,Y)$ such that $A_1\subseteq A_2$. For $A_2\setminus A_1$, we have 
\begin{align*}
&\p(X\in A_2\setminus A_1, Y\in B )+\p(X\in B, Y\in A_2\setminus A_1, )\\
&=\p(X\in A_2, Y\in B )-\p(X\in A_1, Y\in B )+\p(X\in B, Y\in A_2)-\p(X\in B, Y\in A_1 )\\
&=\p(X\in A_2)\p( Y\in B )-\p(X\in A_1)\p( Y\in B )+\p(X\in B)\p( Y\in A_2)-\p(X\in B)\p( Y\in A_1 )\\
&=\p(X\in A_2\setminus A_1)\p( Y\in B )+\p(X\in B)\p( Y\in A_2\setminus A_1).
\end{align*}
Hence, $A_2\setminus A_1\in \mathcal{D}_B(X,Y)$.
\item Let $A_1\subseteq A_2\subseteq A_3\subseteq \cdots$ be an increasing sequence of sets in $\mathcal{D}_B(X,Y)$. For $\bigcup_{i=1}^\infty A_i$, we have
\begin{align*}
&\p\left(A\in \bigcup_{i=1}^\infty A_i, Y\in B\right)+\p\left(A\in B, Y\in \bigcup_{i=1}^\infty A_i\right)\\
&=\lim_{n \to \infty} \p(A\in  A_n, Y\in B)+\lim_{n \to \infty }\p(A\in B, Y\in  A_n)\\
&=\lim_{n \to \infty} \p(A\in  A_n)\p( Y\in B)+\lim_{n \to \infty} \p(A\in B)\p( Y\in  A_n)\\
&=\p\left(A\in \bigcup_{i=1}^\infty A_i\right)\p( Y\in B)+\p(A\in B)\p\left( Y\in \bigcup_{i=1}^\infty A_i\right).
\end{align*}
Hence, $\bigcup_{i=1}^\infty A_i \in\mathcal{D}_B(X,Y)$.
\end{enumerate}

Let $B=(-\infty,y]$ for some $y\in \R$.
Then quasi-independence of $(X,Y)$ implies that $L:=\{(-\infty,x]: x\in \R\}\subseteq \mathcal{D}_B(X,Y)$.
It is straightforward to check that $L$ is a $\pi$-system. Therefore, Sierpi\'nski–Dynkin's $\pi$-$\lambda$ theorem yields $\sigma(L) \subseteq \mathcal{D}_{B}(X,Y)$, where $\sigma(L)$ is the smallest $\sigma$-algebra containing $L$. 
As $\sigma(L)=\mathcal{B}(\R)$, we have $\mathcal{D}_{B}(X,Y)=\mathcal{B}(\R)$ for any $B=(-\infty, y]$ with $y\in \R$.

Next, let $\mathcal{D}(X,Y)=\{B\in \mathcal{B}(\R):\mathcal{D}_B(X,Y)=\mathcal{B}(\R)\}.$
It is clear that $\mathcal{D}(X,Y)$ is a $\lambda$-system and $L\subseteq\mathcal{D}(X,Y)$. Hence, we have $\mathcal{D}(X,Y)=\mathcal{B}(\R)$ by the same argument as above. Therefore, we obtain \eqref{eq:quasi:2} for all $A,B \in \mathcal{B}(\R)$.
\end{proof}

The next lemma justifies the formulations of the quasi-Fr\'echet model via \eqref{eq:identical:identity} and \eqref{eq:identical:identity:set}.
We omit the proof since it is analogous to that of Lemma~\ref{lem:quasi}.

\begin{lemma}\label{lem:quasi_Fre}
    For a random vector $(X,Y) \sim H$ with the marginals $F$, $(X,Y)$ is quasi-$r$-Fr\'echet if and only if  \eqref{eq:identical:identity:set} holds.
\end{lemma}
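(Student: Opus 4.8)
The plan is to run exactly the Sierpi\'nski--Dynkin $\pi$--$\lambda$ argument used in the proof of Lemma~\ref{lem:quasi}, with the only change being that the right-hand side is the mixture measure $r\mu_+ + (1-r)\mu_\perp$ in place of the product measure. First I would dispose of the easy implication: assuming \eqref{eq:identical:identity:set} holds for all Borel $A,B$, take $A=(-\infty,x]$ and $B=(-\infty,y]$; since $\mu_+((-\infty,x]\times(-\infty,y])=\min(F(x),F(y))=H_+(x,y)$ and $\mu_\perp((-\infty,x]\times(-\infty,y])=F(x)F(y)=H_\perp(x,y)$, the identity \eqref{eq:identical:identity:set} reduces to \eqref{eq:identical:identity}, so $(X,Y)$ is quasi-$r$-Fr\'echet.

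For the converse, I would first record that $\mu$, $\mu_+$ and $\mu_\perp$ are all \emph{probability} measures on $(\R^2,\mathcal B(\R^2))$ --- $H_+$ and $H_\perp$ being the distribution functions of the comonotonic and independent couplings of $F$ with itself --- so finite additivity and continuity from below are available for each, and in particular all three have $F$ as each of their marginals. Fix $B\in\mathcal B(\R)$ and set
\[
\mathcal D_B=\Bigl\{A\in\mathcal B(\R): \tfrac12\bigl(\mu(A\times B)+\mu(B\times A)\bigr)=r\mu_+(A\times B)+(1-r)\mu_\perp(A\times B)\Bigr\}.
\]
As in Lemma~\ref{lem:quasi}, $\mathcal D_B$ is a $\lambda$-system: $\R\in\mathcal D_B$ because both sides equal the $F$-measure of $B$; it is closed under proper differences by subtracting the defining identities for $A_1\subseteq A_2$ and using additivity of each of the three measures; and it is closed under increasing unions by continuity from below. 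Taking $B=(-\infty,y]$, the $\pi$-system $L=\{(-\infty,x]:x\in\R\}$ lies in $\mathcal D_B$ by \eqref{eq:identical:identity}, so the $\pi$--$\lambda$ theorem gives $\mathcal B(\R)=\sigma(L)\subseteq\mathcal D_B$; that is, \eqref{eq:identical:identity:set} holds for every Borel $A$ whenever $B$ is a half-line.

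Finally I would bootstrap over $B$: the family $\mathcal D=\{B\in\mathcal B(\R):\mathcal D_B=\mathcal B(\R)\}$ is again a $\lambda$-system (the same three verifications, with the two coordinates interchanged) and contains $L$ by the previous step, hence $\mathcal D=\mathcal B(\R)$, which is precisely \eqref{eq:identical:identity:set}. There is no genuine obstacle; the only points needing a line of care are that the right-hand mixture still behaves additively and continuously under the $\lambda$-system operations --- true because it is a convex combination of two finite measures --- and that both sides agree on $\R\times B$, which is exactly the statement that $\mu_+$ and $\mu_\perp$ both have marginal $F$.
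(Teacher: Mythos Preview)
Your proof is correct and follows exactly the approach the paper indicates (the $\pi$--$\lambda$ argument of Lemma~\ref{lem:quasi}, with the product measure replaced by the mixture $r\mu_+ + (1-r)\mu_\perp$). One small wording point: since $r$ may be negative in the quasi-$r$-Fr\'echet definition, the right-hand side is a \emph{linear} combination of two finite measures rather than necessarily a convex one, but finite additivity and continuity from below still hold for any linear combination of finite measures, so your $\lambda$-system verifications go through unchanged.
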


\subsection{Proofs in Section \ref{sec:ic0}}
\label{app:31}
\begin{proof}[\textbf{\upshape Proof of Theorem \ref{thm:ch:zero:ic}}]
We first show the ``only if" part. As  $(X,Y)\in\mathrm{IC}_0$, $\cov(g(X), g(Y))=0$ for any admissible $g$.
For any $A \in {\mathcal B}(\R)$, taking $g(x)=\id_A(x)$, we have
$\cov(g(X),g(Y))=\cov(\id_A(X),\id_A(Y))=0.$
For any $A, B\in {\mathcal B}(\R)$, let $g(x)=\id_{A}(x)+\id_{B}(x)$, which leads to 
\begin{align*}
\cov(g(X),g(Y))&=\cov\(\id_{A}(X)+\id_{B}(X),\id_{A}(Y)+\id_{B}(Y)\)\\
&=\cov\(\id_{A}(X),\id_{B}(Y)\)+\cov\(\id_{B}(X),\id_{A}(Y)\)\\
&=\p\(X \in A, Y\in B\)+\p\(X \in B, Y\in A\)\\
&~~~-\p\(X\in A\)\p\(Y\in B\)-\p\(X\in B\)\p\(Y\in A\).
\end{align*}
As $\cov(g(X),g(Y))=0$, we have that $(X,Y)$ satisfies \eqref{eq:quasi:2} for all $A,B \in \mathcal{B}(\R)$. Hence, $(X,Y)$ is quasi-independent by Lemma \ref{lem:quasi}.

Next, we show that quasi-independence implies  $\cov(g(X),g(Y))=0$ for any admissible $g$.
As quasi-independence implies \eqref{eq:quasi:2}, by taking $A=B$, we have 
 $
  \p(X \in A, Y\in A)=\p(X\in A)\p(Y \in A)
 $
 for any $A\in {\mathcal B}(\R)$.
Hence, $\cov(g(X), g(Y))=0$ for any indicator functions $g=\id_{A}$. 

First, let $g(x)=\sum_{i=1}^n c_i \id_{A_i}(x)$ be an admissible simple function   where $A_1,\dots, A_n \subseteq \R$ are disjoint measurable sets, and  $c_1, \dots c_n\in \R$  are real numbers. For the simple function $g$, we have
$$
\begin{aligned}
\cov(g(X),g(Y))
&=\sum_{i=1}^n\sum_{j=1}^n c_ic_j\cov\( \id_{A_i}(X), \id_{A_j}(Y)\)\\
&=\sum_{1\le i<j\le n} \(c_ic_j\cov\( \id_{A_i}(X), \id_{A_j}(Y)\)+c_jc_i\cov\( \id_{A_j}(X), \id_{A_i}(Y)\)\)\\
&=\sum_{1\le i<j\le n} c_ic_j\bigg(\p\( X\in A_i,Y \in A_j\)-\p\( X\in A_i\)\p\(Y \in A_j\)\\
&~~~+\p\( X \in A_j, Y\in A_i\)-\p\(X\in A_j\)\p \(Y\in A_i\)\bigg)=0.
\end{aligned}$$
Therefore,  we have $\cov(g(X),g(Y))=0$ for all admissible simple functions $g$.

Second, let $g$ be an admissible non-negative  function. Admissibility of $g$ implies that
$\E[g(X)]<\infty$, $\E[g(Y)]<\infty$ and $\E[g(X)g(Y)]<\infty$.
For a non-negative admissible function $g$, there exists a sequence of non-negative simple functions $\{g_n\}_{n\ge 1}$  such that $g_n \uparrow g$ pointwise.
Thus, we can get
$$\begin{aligned}
\cov\(g(X),g(Y)\)&=\cov\(\lim_{n \to \infty} g_n(X),\lim_{n \to \infty} g_n(Y)\)\\
&=\E\left[\lim_{n \to \infty} g_n(X)\lim_{n \to \infty} g_n(Y)\right]-\E\left[\lim_{n \to \infty} g_n(X)\right]\E\left[\lim_{n \to \infty} g_n(Y)\right]\\
&=\E\left[\lim_{n \to \infty} g_n(X) g_n(Y)\right]-\E\left[\lim_{n \to \infty} g_n(X)\right]\E\left[\lim_{n \to \infty} g_n(Y)\right]\\
&=\lim_{n \to \infty}\E\left[ g_n(X) g_n(Y)\right]-\lim_{n \to \infty}\E\left[ g_n(X)\right]\E\left[ g_n(Y)\right]\\
&=\lim_{n \to \infty}\left\{\E\left[ g_n(X) g_n(Y)\right]-\E\left[ g_n(X)\right]\E\left[ g_n(Y)\right]\right\}\\
&=\lim_{n \to \infty}\cov(g_n(X), g_n(Y))=0.
\end{aligned}$$
Therefore, we have $\cov(g(X),g(Y))=0$ for all  admissible non-negative  function $g$.

Finally, let  $g$ be an admissible function. Define $g_+=\max(g,0)$ and $g_-=-\min(g,0)$. It is clear that $g_+$ and $g_-$ are non-negative admissible functions and $g=g_+-g_-$.
Let $G=g_++g_-$. We have that $G$ is also a non-negative admissible function. Hence,  we get $\cov(g_+(X),g_+(Y))=0$,  $\cov(g_-(X),g_-(Y))=0$ and $\cov(G(X),G(Y))=0$, which imply that
$
\cov(g_+(X),g_-(Y))+\cov(g_-(X),g_+(Y))=0.
$
As a result, we have
\begin{align*}
\cov\(g(X),g(Y)\)=\cov(g_+(X)-g_-(X),g_+(Y)-g_-(Y))=0.
\end{align*}
Therefore, we can conclude $(X,Y) \in \mathrm{IC}_0$.
\end{proof}

\begin{proof}[\textbf{\upshape Proof of Proposition \ref{pro:ic0:independent}}]
  For all $A,B \in \mathcal B(\R)$, we have $$\p(X_{\pi_1} \in A, X_{\pi_2}\in B)=\frac{1}{2}\p(X_1 \in A, X_2\in B)+\frac{1}{2}\p(X_1 \in B, X_2 \in A).$$
    Since $X_1$ and $X_2$ have the same distribution, independence of  $(X_{\pi_1}, X_{\pi_2})$ is equivalent to
  \begin{align*}
      \p(X_{\pi_1} \in A, X_{\pi_2}\in B)=\p(X_{\pi_1} \in A)\p( X_{\pi_2}\in B)
      =\p(X_1 \in A)\p(X_2 \in B)=\p(X_1 \in B)\p(X_2 \in A).
  \end{align*}
  Hence, independence of $(X_{\pi_1}, X_{\pi_2})$ is equivalent to 
  \begin{align*}\p(X_1 \in A, X_2\in B)+\p(X_1 \in B, X_2 \in A)=\p(X_1 \in A)\p(X_2 \in B)+\p(X_1 \in B)\p(X_2 \in A),
  \end{align*}
  which is also equivalent to $(X_1,X_2)\in \mathrm{IC}_0$ by Theorem \ref{thm:ch:zero:ic}.
\end{proof}

\begin{proof}[\textbf{\upshape Proof of Proposition \ref{cor:n:atomic}}]
  We can get  ``(i) $\Leftrightarrow$ (ii)" from Theorem \ref{thm:ch:zero:ic}. We only show ``(ii) $\Leftrightarrow$ (iii)".

 Suppose that $P+P^\top=\mathbf p\mathbf q^\top +\mathbf q\mathbf p^\top$.
 Let $S=P^\top-\mathbf q\mathbf p^\top$. We first verify that $S$ satisfies (a), (b) and (c).
 \begin{enumerate}[1.]
 \item As  $s_{ij}+p_iq_j=p_{ij}\ge 0$, we have that $S$ satisfies (a).
 \item The $j$th row sum of $S$ is $\sum_{i=1}^n p_{ij}-\sum_{i=1}^nq_jp_i=q_j-q_j=0$. The $i$th column sum of $S$ is $\sum_{j=1}^n p_{ij}-\sum_{j=1}^n p_i q_j=p_i-p_i=0$. Hence, $S$ satisfies (b).
 \item   We have $S+S^\top= P^{\top}+P-\mathbf q\mathbf p^{\top}-\mathbf p\mathbf q^{\top}=O$ where $O$ is an $n \times n$ matrix with all elements equal to 0. 
Hence, $S$ satisfies (c).   
 \end{enumerate}

Next, let $P=\mathbf p\mathbf q^\top+S$ with $S$ satisfies (a), (b) and (c). We verify that $P$ is a valid probability matrix and satisfies (ii).
First, it is clear that $p_{ij}\ge0$ as $p_{ij}=p_iq_j+s_{ij}\ge 0$.
Second, the marginals is $\mathbf q$ and $\mathbf p$ as  $\sum_{j=1}^n p_{ij}=\sum_{j=1}^n (p_iq_j+s_{ij})=p_i$ and $\sum_{i=1}^n p_{ij}=\sum_{i=1}^n (p_iq_j+s_{ij})=q_j$. 
Finally, since $S+S^\top=O$, we have $P+P^\top=\mathbf p\mathbf q^\top+S+ \mathbf q \mathbf p^\top+S^\top= \mathbf p\mathbf q^\top +\mathbf q\mathbf p^\top.$ 
Therefore, we can conclude ``(ii) $\Leftrightarrow$ (iii)". 
\end{proof}


\begin{proof}[\textbf{\upshape Proof of Proposition \ref{pro:ic0:one_bi}}]

It is clear that independent $(X,Y)$ satisfies invariant correlation with $\corr(X,Y)=0$. By Theorem \ref{thm:ch:zero:ic}, we only need to show that quasi-independence implies independence under the condition of Proposition \ref{pro:ic0:one_bi}. Without loss of generality, assume that the support of $X$ is $\{1,2\}$. To show that $X$ and $Y$ are independent, it is enough to show that $\p(X=1,Y\in A)=\p(X=1)\p(Y \in A)$ for all $A \in \R$.
We show this statement in each of the following 4 cases.

{Case 1}: Assume $\{1,2\} \subseteq A$. We have $\p(X \in A)=1$ and $\p(X \in A,Y=1)=\p(Y=1)$. Hence, $\p(X\in A, Y=1)=\p(X\in A)\p(Y =1)$. In this case, quasi-independence means
      $\p(X =1, Y\in A)+\p(X \in A, Y=1)=\p(X=1)\p(Y \in A)+\p(X\in A)\p(Y=1).$ Thus, we can  get $\p(X=1,Y\in A)=\p(X=1)\p(Y \in A)$.

{Case 2}: 
Assume $1 \notin A$ and $2 \notin A$. We have  $\p(X \in A)=0$ and $\p(X\in A, Y=1)=0$. Hence,  $\p(X\in A,Y=1)=\p(X \in A)\p(Y=1)$. Therefore, we have $\p(X=1,Y\in A)=\p(X=1)\p(Y \in A)$ using the same argument as in Case 1.

{Case 3}: 
Assume $1 \in A$, $2 \notin A$.
First, using Case 2, we have $\p(X=1,Y\in A\setminus \{1\})=\p(X=1)\p(Y\in A\setminus \{1\}).$  Furthermore, quasi-independence implies $\p(X=1,Y=1)=\p(X=1)\p(Y=1)$. Therefore, 
$$
    \p(X=1, Y\in A)=\p(X=1, Y=1)+\p(X=1,Y\in A\setminus \{1\})=\p(X=1)\p(Y\in A).$$

{Case 4}: 
 Assume $1 \notin A$, $2 \in A$. By the same argument as in Case 3, we have $\p(X=2, Y \in A)=\p(X=2)\p(Y \in A)$. Hence,
\begin{align*}
\p(X=1,Y\in A)&=\p(Y \in A)-\p(X=2,Y\in A)\\
&=\p(Y \in A)-\p(X=2)\p(Y\in A)\\
&=\p(X=1)\p(Y \in A).\end{align*}

In conclusion, we have $\p(X=1,Y\in A)=\p(X=1)\p(Y \in A)$ for all $A \subseteq \R$. As $\p(X=1)+\p(X=2)=1$, we can also obtain $\p(X=2,Y\in A)=\p(X=2)\p(Y \in A)$ for all $A \subseteq \R$. Hence, $X$ and $Y$ are independent.
\end{proof}

\subsection{Proofs in Section \ref{sec:non:identical}}\label{app:3.2}
The proof of Proposition \ref{pro:bi-atomic} is built on the following lemma.
\begin{lemma}\label{lem:bernoulli}
If $X$ and $Y$ are  bi-atomic random variables with the same support, then $(X,Y)\in \mathrm{IC}$.
\end{lemma}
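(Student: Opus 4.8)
The plan is to show that if $X$ and $Y$ are bi-atomic with common support $\{a,b\}$ (with $a<b$), then for any admissible $g$, the correlation $\corr(g(X),g(Y))$ equals $\corr(X,Y)$ and hence is invariant. The key observation is that a bi-atomic random variable is, up to an increasing affine transformation, a Bernoulli random variable, and every function on a two-point set is affine. First I would note that since $\supp(X)=\supp(Y)=\{a,b\}$, any measurable $g:\R\to\R$ restricted to $\{a,b\}$ is of the form $g(a)=c_1$, $g(b)=c_2$; if $c_1=c_2$ then $g(X)$ is degenerate and $\corr(g(X),g(Y))$ is not well-defined, so we may assume $c_1\neq c_2$. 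Then $g(x)=\alpha x+\beta$ on $\{a,b\}$ for $\alpha=(c_2-c_1)/(b-a)\neq 0$ and appropriate $\beta$, so $g(X)=\alpha X+\beta$ and $g(Y)=\alpha Y+\beta$ almost surely.

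The main step is then the elementary fact that the Pearson correlation coefficient is invariant under common affine transformations with nonzero slope: for $\alpha\neq 0$,
\begin{align*}
\corr(\alpha X+\beta,\alpha Y+\beta)
=\frac{\cov(\alpha X+\beta,\alpha Y+\beta)}{\sqrt{\var(\alpha X+\beta)\var(\alpha Y+\beta)}}
=\frac{\alpha^2\cov(X,Y)}{\sqrt{\alpha^2\var(X)\alpha^2\var(Y)}}
=\corr(X,Y).
\end{align*}
Here I use that $X,Y\in\mathcal L^2$ (bi-atomic random variables automatically have finite, nonzero variance, since their support has two distinct points), so all the quantities are finite and the denominator is positive. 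Combining this with the previous paragraph, $\corr(g(X),g(Y))=\corr(\alpha X+\beta,\alpha Y+\beta)=\corr(X,Y)$ for every admissible $g$, which is exactly the defining property of $(X,Y)\in\mathrm{IC}_r$ with $r=\corr(X,Y)$; hence $(X,Y)\in\mathrm{IC}$.

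I do not anticipate a serious obstacle here — the lemma is essentially the statement that on a two-point set every function is affine, combined with affine-invariance of correlation. The only points requiring a little care are: (a) checking admissibility, i.e.\ excluding the degenerate case $g(a)=g(b)$, for which $\corr(g(X),g(Y))$ is simply undefined and thus imposes no constraint; and (b) confirming that the common support assumption is what makes the slope $\alpha$ the \emph{same} for both $g(X)$ and $g(Y)$ — if the supports differed, a single function $g$ would generally induce different affine maps on each, which is precisely why the same-support hypothesis cannot be dropped (and indeed Proposition~\ref{pro:bi-atomic} shows different supports force independence).
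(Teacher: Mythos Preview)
Your proposal is correct and follows essentially the same approach as the paper's proof: both observe that any admissible $g$ acts as a common affine map $x\mapsto \alpha x+\beta$ on the shared two-point support, and then invoke affine-invariance of the Pearson correlation. Your write-up is simply more detailed (making the coefficients explicit, handling the degenerate case $g(a)=g(b)$, and remarking why the same-support hypothesis is needed), but the argument is the same.
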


\begin{proof}[\textbf{\upshape Proof of Lemma \ref{lem:bernoulli}}]
As the support of $X$ and $Y$ has only two points,  $g(X)$ and $g(Y)$ are linear transformations of $X$ and $Y$.
Therefore, we have
 $
  \corr(g(X),g(Y))= \corr(aX+b, aY+b)=\corr(X,Y)
 $
 for any admissible $g$. 
 Hence, $(X,Y)\in \mathrm{IC}$  regardless of the dependence structure of $(X,Y)$.
\end{proof}

\begin{proof}[\textbf{\upshape Proof of Proposition \ref{pro:bi-atomic}}]
The ``if'' part  of  (ii) is clear from Example \ref{ex:independent} and Lemma~\ref{lem:bernoulli}. We  show the ``only if" part below.

Let $\{x_1,x_2\}$ and $\{y_1,y_2\}$ be the support of $X$ and $Y$, respectively, with $x_1 < x_2$ and $y_1<y_2$. Assume $X$ and $Y$ have different supports. We can easily find a function $g$ such that $g(x_1)<g(x_2)$ and $g(y_1)>g(y_2)$.
Then $\corr(g(X), g(Y))=-\corr(X,Y)$ since $g(X)$ is an increasing linear transformation of $X$ and $g(Y)$ is  a decreasing linear transformation of $Y$.
Therefore, $(X,Y) \in \mathrm{IC}$ implies $\corr(X,Y)=0$. For $r\neq 0$, $(X,Y) \in \mathrm{IC}_r$ implies that $X$ and $Y$ have the same support.
If $(X,Y) \in \mathrm{IC}_0$, by Proposition \ref{pro:ic0:one_bi}, $X$ and $Y$ are independent.
 \end{proof}




Before showing Theorem \ref{thm:main:sec:3} for the general case, we first analyze the case of the tri-atomic distribution in the following lemma.

  \begin{lemma}\label{pro:tri}
 Suppose  $\supp(X) \subseteq \supp(Y)=\{x_1, x_2, x_3\}$.  
 If $(X,Y) \in \mathrm{IC}$, then   $F_X=F_Y$ or   $\corr(X,Y)=0$.
 \end{lemma}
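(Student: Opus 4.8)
The plan is to reduce everything to linear algebra using the explicit formula for the correlation coefficient of tri-atomic variables given just before Proposition \ref{prop:ic:finite:identical}, exploiting the fact that $\mathrm{IC}$ forces the correlation to be stable under *all* admissible transforms, hence under all relabellings of the three atoms $x_1, x_2, x_3$. First I would dispose of the degenerate subcases: if $X$ is constant or bi-atomic, then $\supp(X)\subsetneq\supp(Y)$, and I would invoke Proposition \ref{pro:bi-atomic} (or Proposition \ref{pro:ic0:one_bi} together with Theorem \ref{thm:ch:zero:ic}) to conclude that either $(X,Y)\in\mathrm{IC}_0$, i.e. $\corr(X,Y)=0$, or we land in a situation handled by the bi-atomic analysis; in all these cases the conclusion $F_X=F_Y$ or $\corr(X,Y)=0$ holds. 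So the substantive case is $\supp(X)=\supp(Y)=\{x_1,x_2,x_3\}$, which is what I treat below.

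In this case, write $P=(p_{ij})_{3\times 3}$ with $p_{ij}=\p(X=x_i,Y=x_j)$, let $\bp=P\bone_3$ and $\bq=P^\top\bone_3$ be the two marginal probability vectors, and set $D_\bp=\diag(\bp)$, $D_\bq=\diag(\bq)$. The key observation is that for any injective $g:\{x_1,x_2,x_3\}\to\R$, writing $\bz=(g(x_1),g(x_2),g(x_3))^\top$, admissibility gives
\begin{align}\label{eq:tri-corr}
\corr(g(X),g(Y))=\frac{\bz^\top(P-\bp\bq^\top)\bz}{\sqrt{\bz^\top(D_\bp-\bp\bp^\top)\bz}\,\sqrt{\bz^\top(D_\bq-\bq\bq^\top)\bz}},
\end{align}
and $(X,Y)\in\mathrm{IC}_r$ forces the right-hand side of \eqref{eq:tri-corr} to equal the constant $r$ for every such $\bz\in\R^3\setminus\{c\bone_3\}$, and moreover (applying a non-injective $g$, e.g. merging two atoms) for the corresponding bi-atomic contractions as well. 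I would first use injective $g$ with values of both signs (and the freedom to shift and scale, which does not change the ratio) to run $\bz$ over a two-parameter family, obtaining from the identity ``numerator $=r\times$(geometric mean of the two quadratic forms)'' a system of polynomial identities in the entries of $P$. Squaring to remove the square root, the condition becomes: the quartic form $\bigl(\bz^\top(P-\bp\bq^\top)\bz\bigr)^2-r^2\,\bz^\top(D_\bp-\bp\bp^\top)\bz\cdot\bz^\top(D_\bq-\bq\bq^\top)\bz$ vanishes identically on $\bz\in\R^3$ (it already vanishes at $\bz=c\bone_3$ since all three quadratic forms annihilate $\bone_3$, as rows/columns of $P-\bp\bq^\top$ sum to zero). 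Comparing coefficients gives finitely many equations; combined with the sign information from $r=\corr$ being a fixed number (so that the unsquared identity, not just its square, holds), I expect these to force either $\bp=\bq$ — which, since $X,Y$ are both supported on $\{x_1,x_2,x_3\}$ and $\bp,\bq$ are their respective mass vectors, is exactly $F_X=F_Y$ — or $P-\bp\bq^\top$ to be a matrix whose associated quadratic form $\bz\mapsto\bz^\top(P-\bp\bq^\top)\bz$ is identically zero, i.e. $P+P^\top=\bp\bq^\top+\bq\bp^\top$, which by Proposition \ref{cor:n:atomic} is precisely $(X,Y)\in\mathrm{IC}_0$, hence $\corr(X,Y)=0$.

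The main obstacle is the bookkeeping in the coefficient comparison of the quartic form in three variables: a priori there are $15$ monomials, so one must organize the computation carefully, and one must use the constraint that all three quadratic forms vanish on $\bone_3$ to cut the problem down to a genuinely two-dimensional one (e.g. by restricting to the plane $\{z_1+z_2+z_3=0\}$ and parametrizing it, turning the quartic identity into a one-variable statement after projectivizing). The delicate point is ruling out ``mixed'' solutions where $\bp\neq\bq$ yet the ratio \eqref{eq:tri-corr} is still constant: here I would use the extra admissible transforms that contract $\{x_1,x_2,x_3\}$ to two points — these produce, via Proposition \ref{pro:bi-atomic}, the constraint that every bi-atomic push-forward $(g(X),g(Y))$ must have matching two-point marginals unless its correlation is $0$, which pins down $\bp=\bq$ coordinatewise whenever $\corr(X,Y)\neq0$. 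Assembling these two regimes — $\corr(X,Y)=0$ on one side, and $\corr(X,Y)\neq0\Rightarrow F_X=F_Y$ on the other — yields the dichotomy in the statement.
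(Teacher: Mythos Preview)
Your proposal has a genuine gap at the ``delicate point.'' You propose to use bi-atomic contractions together with Proposition~\ref{pro:bi-atomic} to pin down $\bp=\bq$ coordinatewise when $\corr(X,Y)\neq0$. But Proposition~\ref{pro:bi-atomic} concerns \emph{supports}, not \emph{distributions}: it says that for bi-atomic $X',Y'$, membership in $\mathrm{IC}$ is equivalent to having the same support or being independent. When $\supp(X)=\supp(Y)=\{x_1,x_2,x_3\}$ and you merge two atoms via a common $g$, the resulting $g(X)$ and $g(Y)$ automatically share the same two-point support, so by Lemma~\ref{lem:bernoulli} they are \emph{always} in $\mathrm{IC}$ regardless of whether their marginals agree. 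Hence these contractions impose no constraint whatsoever and cannot force $p_i=q_i$. The same issue undermines your handling of the degenerate case $|\supp(X)|=2$: any bi-atomic push-forward of $Y$ that keeps $g(X)$ non-degenerate again shares support with $g(X)$, so Proposition~\ref{pro:bi-atomic} is vacuous there too.

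What actually works, and what the paper does, is to carry out the polynomial coefficient comparison you flagged as ``the main obstacle'' but did not execute. The paper exploits the affine invariance of $\corr$ to fix $g(x_2)=0$, $g(x_3)=1$ and let $g(x_1)=a$ be the single free parameter; the identity $\corr(g(X),g(Y))=r$ then becomes a rational-function identity in $a\in\R$, and matching coefficients of $a^0,\dots,a^4$ after squaring yields five explicit equations in $s_{11},s_{33},s_{13}+s_{31}$ and $r^2$. Taking ratios eliminates $r^2$ and, after using $\sum p_i=\sum q_i=1$, collapses to the factored relation $(q_1q_3p_2-p_1p_3q_2)(p_2q_3-q_2p_3)=0$. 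Permuting the roles of the indices gives two more such relations, and a short case analysis on which factors vanish forces $\bp=\bq$. The bi-atomic subcase $p_2=0$ is handled inside this same system (one checks that the factored relation cannot hold), not by a separate appeal to bi-atomic results.
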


 \begin{proof}[\textbf{\upshape Proof of Lemma \ref{pro:tri}}]
Since a linear transform of $g$ does not change $\corr(g(X),g(Y))$, it suffices to consider the case $g(x_1)=a$, $g(x_2)=0$, and $g(x_3)=1$.
 Let $p_{i}=\p(X=x_i)$, $q_j=\p(Y=x_j)$,  $p_{ij}=\p(X=x_i,Y=x_j)  $ and $s_{ij}=p_{ij}-p_iq_j$ for $i,j\in[3]$. As $\supp(X)\subseteq\supp(Y)=\{x_1, x_2, x_3\}$, we can assume that $0<p_i<1$ for $i=1,3$ and $0<q_i<1$ for $j\in[3]$.
 Write the matrices  $P=(p_{ij})_{3\times 3}$  and $S= (s_{ij})_{3\times 3}$,
 and they are connected by
$
P= \mathbf p \mathbf q^\top   +  S,
$
where $\mathbf p=(p_1,p_2,p_3)^\top$ and $\mathbf q=(q_1,q_2,q_3)^\top$.
Note that each of the row sums and column sums of $S$ is $0$, that is,
$\sum_{k=1}^3 s_{kj} = 0 = \sum_{k=1}^3 s_{ik},~~i,j\in[3]$.

Define the function $f:\R\to \R$ as
\begin{align*}
f(a):&=\corr(g(X),g(Y)) = \frac{a^2 p_{11}+ a (p_{13} + p_{31}) +  p_{33} - (ap_1 + p_3) (aq_1 + q_3) }{ \sqrt{a^2p_1 +   p_3 -(ap_1 + p_3)^2  } \sqrt{ a^2q_1 +  q_3 -(aq_1 + q_3)^2   } }
\\& = \frac{a^2  s_{11} + a ( s_{13}+ s_{31} ) +  s_{33}}{ \sqrt{a^2(p_1 -p_1^2) -  2a p_1p_3  + (p_3 -p_3^2)   }
\sqrt{a^2(q_1 -q_1^2) -  2a q_1q_3  + (q_3 -q_3^2)   }  }.
\end{align*}
If $(X,Y) \in \mathrm{IC}_r$, the equation $f(a)=r$ holds for all $a\in \R$. 
Hence, by matching the coefficients in front of $a^k$, $k\in [4]$, in the equation $f(a)=r$, we obtain the follow equations:
\begin{subequations}
\begin{align}
s_{11}^2 = r^2   (p_1 -p_1^2)(q_1-q_1^2),~~~~~~~
s_{11}(s_{13}+s_{31}) =  -  r^2  \left( q_1q_3   (p_1 -p_1^2) +  p_1p_3 (q_1 -q_1^2)\right),\label{6a}\\
2 s_{11}s_{33}  +(s_{13}+s_{31})^2  =  r^2  \left(  (p_1 -p_1^2)  (q_3 -q_3^2) + (p_3 -p_3^2) (q_1 -q_1^2)  +4 p_1q_1p_3q_3\right),\label{6c}\\
s_{33}(s_{13}+s_{31}) = -  r^2  \left(   p_1p_3(q_3 -q_3^2)   +   q_1q_3  (p_3 -p_3^2)    \right),~~~~~~~
s_{33}^2 =  r^2(p_3 -p_3^2)(q_3-q_3^2).\label{6d}
\end{align}
\end{subequations}
Next, we will show that if $r\neq 0$, we have $\mathbf p=\mathbf q$.





As $p_1, p_3, q_1 q_3$ take values in $(0,1)$ and $r \ne 0$, we have that $s_{11}$, $s_{33}$ and $ s_{13}+s_{31}$ are non-zero.
Hence, \eqref{6a} and  \eqref{6d}  yield
$$\frac{s^2_{11}}{s^2_{33}}=\frac{ \left(  q_1q_3   (p_1 -p_1^2) +   p_1p_3(q_1 -q_1^2) \right)^2 }{ \left( p_1p_3(q_3 -q_3^2)   +   q_1q_3  (p_3 -p_3^2)  \right)^2 }=\frac{(p_1 -p_1^2)(q_1-q_1^2)}{(p_3 -p_3^2)(q_3-q_3^2)}.$$
Simplifying the equation, we get
$$\left\{q_1^2q_3^2 (p_1 -p_1^2)(p_3 -p_3^2)-p_1^2p_3^2(q_1-q_1^2)(q_3 -q_3^2)\right\}\left\{(p_1 -p_1^2)(q_3-q_3^2)-(p_3 -p_3^2)(q_1-q_1^2)\right\}=0.$$
%
This equation is further simplified to
\begin{equation}\label{eq:tri}
\left(q_1q_3p_2-p_1p_3q_2\right)\left(p_2q_3-q_2p_3\right)=0,
\end{equation}
by using $p_1+p_2+p_3=1$ and $q_1+q_2+q_3=1$.
We can observe that if $p_2=0$, then \eqref{eq:tri} cannot hold. Hence, we consider the case when $0<p_2<1$ below.

In this case, we can switch the roles of indices $1,2,3$ as $p_i$ and $q_i$ take value in $(0,1)$ for all $i\in[3]$.
Hence,  we have $f(a)=r$ for all $a\in\R$ if and only if 
\begin{align}
(q_1q_3p_2-p_1p_3q_2) \left(p_2q_3-q_2p_3\right)=0,\label{pq1}\\
(q_2q_1p_3-p_2p_1q_3)\left(p_3q_1-q_3p_1\right)=0,\label{pq2}\\
(q_3q_2p_1-p_3p_2q_1 )\left(p_1q_2-q_1p_2\right)=0.\label{pq3}
\end{align}
Now we consider all possible cases when \eqref{pq1}, \eqref{pq2} and \eqref{pq3} hold simultaneously.

First, assume that all the first terms  in \eqref{pq1}, \eqref{pq2} and \eqref{pq3} equal 0;
that is $q_1q_3p_2=p_1p_3q_2$, $q_2q_1p_3=p_2p_1q_3$ and $q_3q_2p_1=p_3p_2q_1$. Thus, we have $q_1q_2q_3=p_1p_2p_3$. As a result, we get $p^2_1=q^2_1$, $p_2^2=q_2^2$ and $p^2_3=q^2_3$, which lead to $p_1=q_1$, $p_2=q_2$ and $p_3=q_3$.

Second, assume that all the second terms in \eqref{pq1}, \eqref{pq2} and \eqref{pq3}  equal 0; that is 
$p_2q_3=q_2p_3$, $p_3q_1=q_3p_1$ and $p_1q_2=q_1p_3$.
Thus, we have $q_1/p_1=q_2/p_2=q_3/p_3$, which leads to $p_1=q_1$, $p_2=q_2$ and $p_3=q_3$.
We observe that one of the three equations $p_2q_3=q_2p_3$, $p_3q_1=q_3p_1$ and $p_1q_2=q_1p_3$ is redundant.
Hence, if two of the second terms in \eqref{pq1}, \eqref{pq2} and \eqref{pq3}  equal 0, the third one also equals 0.
 
In the final step,  we consider the case that two of the first terms in  \eqref{pq1}, \eqref{pq2} and \eqref{pq3}  equal 0. Without loss of generality, 
we assume $q_1q_3p_2=p_1p_3q_2$,
$q_2q_1p_3=p_2p_1q_3$ and  $p_1q_2=q_1p_2$.
By  $q_1q_3p_2=p_1p_3q_2$ and $p_1q_2=q_1p_2$,  we get $p_3=q_3$. Combining with $q_2q_1p_3=p_2p_1q_3$ , we have $p_1^2=q_1^2$ and $p_2^2=q_2^2$, which gives $p_1=q_1$ and $p_2=q_2$.

In conclusion, if $(X,Y) \in \mathrm{IC}_r$  with $r\neq 0$, then $\mathbf{p}=\mathbf{q}$, or equivalently, $F_X=F_Y$. 
\end{proof}

%
\begin{proof}[\textbf{\upshape Proof of Theorem \ref{thm:main:sec:3}}]
The ``if" part is clear. For the ``only if" part, we will show that if $(X,Y) \in \mathrm{IC}$, then $\corr(X,Y)=0$.  
We first discuss the case $X$ and $Y$ have different supports.  Let $A=\supp(X)$ and $B=\supp(Y)$.
\begin{enumerate}[(a)] 
 \item Suppose $A\cap B \neq A$ and $A \cap B \neq B$. In this case, we can find a function $g$ such that $g(X)$ and $g(Y)$ are  bi-atomic random variables with different supports.  By Proposition~\ref{lem:h} and Proposition \ref{pro:bi-atomic}, we have that $(g(X),g(Y))\in \mathrm{IC}$ and $g(X)$ and $g(Y)$ are independent, which gives $\corr(g(X), g(Y))=0$.  As $(X,Y) \in \mathrm{IC}$, we have $\corr(X,Y)=0$.

 \item Suppose  $A \subsetneq B$. We can find a function such that
         $g(X)$ is a bi-atomic random variable and $g(Y)$ is a tri-atomic random variable. By Proposition~\ref{lem:h} and Lemma \ref{pro:tri}, we have  that $(g(X),g(Y))\in \mathrm{IC}$ and $\corr(g(X), g(Y))=0$. Hence, $\corr(X,Y)=0$. If $B \subsetneq A$, we can also have $\corr(X,Y)=0$.
 \end{enumerate}
 Next, we discuss the case $X$ and $Y$ have the same support.
 Let $S=\supp(X)=\supp(Y)$. 
 Because $X$ and $Y$ have different distributions, there exists a set $A \subsetneq S$ such that $\p(X \in A) \neq \p(Y \in A)$.
 Furthermore, we can also find $B \subsetneq S$ such that $B\cap A=\emptyset$ and $A \cup B \subsetneq S$  as $S$ contains more than two points. Thus, the sets $A$, $B$ and $S/(A \cup B)$ are three non-empty and exclusive sets. Let $g(x)=\id_A+2\id_B+3\id_{S/(A \cup B)}$.
  It is clear that $g(X)$ and $g(Y)$ are tri-atomic random variables with the same support $[3]$ but have different distributions. Therefore, by Lemma \ref{pro:tri}, we have $\corr(g(X), g(Y))=0$, and thus  $\corr(X, Y)=0$.
\end{proof}

\subsection{Proofs in Section \ref{sec:identical}}
\label{app:A4}

\begin{proof}[\textbf{\upshape Proof of Proposition \ref{prop:ic:finite:identical}}]
We first show the necessity.
If $(X,Y)\in \mathrm{IC}_r$, then
\begin{align}\label{eq:ic:finite}
\corr(g(X),g(Y))=\frac{\bz\T(P-\bp\bp\T)\bz}{\bz\T(D-\bp\bp\T)\bz}=r,\quad  \bz=g(\bx)=(g(x_1),\dots g(x_n))^\top,
\end{align}
holds for all admissible function $g$, that is, for all real vectors $\bz \in \breve \R^n$ with $\breve{\R}^n = \R^n\backslash\{c \bone_n:c \in \R\}$.
Equation~\eqref{eq:ic:finite} is equivalent to
\begin{align}\label{eq:ic:finite:quadratic}
\bz\T(P - r D - (1-r)\bp\bp\T)\bz=0\quad \text{for all } \bz \in \breve \R^n.
\end{align}
This implies that
$(P +P\T-2 r D - 2(1-r)\bp\bp\T)\bz=\bzero_n$ for all $\bz \in \breve \R^n$.
By taking $\bz = \be_i$ for $i\in [n]$, we have that $P +P\T-2 r D - 2(1-r)\bp\bp\T=O$, and thus we obtain~\eqref{eq:ic:finite:mixture:m:pi}.

Next, we show the sufficiency.
Suppose that the probability matrix $P$ satisfies~\eqref{eq:ic:finite:mixture:m:pi}.
Then we have that
$$
P-rD-(1-r)\bp\bp\T + P\T-rD-(1-r)\bp\bp\T=O.
$$
Therefore, for every $\bz \in \R^n$, we have
\begin{align*}
\bz\T(P-rD-(1-r)\bp\bp\T)\bz& = \bz\T(P-rD-(1-r)\bp\bp\T)\T\bz 
= \bz\T(P\T-rD-(1-r)\bp\bp\T)\bz \\
&= - \bz\T(P-rD-(1-r)\bp\bp\T)\bz,
\end{align*}
and thus $ \bz\T(P-rD-(1-r)\bp\bp\T)\bz=0$.
Therefore, we have $(X,Y)\in \mathrm{IC}_r$.

In order for $r D + (1-r)\bp\bp\T$ to be a probability matrix, $r\in[-1,1]$ has to satisfy
$0\leq rp_j + (1-r)p_j^2 \leq 1$ for all $j\in [n]$ and $0 \leq (1-r)p_ip_j\leq 1$ for all $i,j \in [n],~ i\neq j$,
Equivalently, $-p_j/(1-p_j)\leq r \leq 1+1/p_j$ for all  $j\in [n]$, and $1-1/(p_ip_j)\leq r$ for all $i,j \in [n],~i\neq j$.
Therefore, $r$ satisfies \eqref{eq:range:invariant:correlation}.
\end{proof}

\begin{proof}[\textbf{\upshape Proof of Theorem~\ref{thm:main:sec:4}}]

\noindent
We first show the necessity.
It suffices to show~\eqref{eq:identical:identity} for every $x,y\in \supp(X)$ since the cumulative distribution functions change only at such points.

The case that $X$ and $Y$ are bi-atomic is verified in Proposition \ref{prop:ic:finite:identical}. In what follows, we 
suppose $X, Y \in \mathcal{L}^2$  are not bi-atomic.
Let $x$ and $y$ be any two distinct points in $\supp(X)$.
In view of Proposition~\ref{prop:ic:finite:identical}, it suffices to consider the case when there exist three distinct points $z_1,z_2,z_3\in \supp(X)$, $z_1 < z_2 < z_3$, with two of them equal $x$ and $y$.
Let $z_0 = -\infty$ and $z_4 = \infty$.
Define
\begin{align}\label{eq:h:function}
h(t)=\sum_{i=0}^3 \id_{\{ t > z_{i}\}},\quad  t \in \R.
\end{align}
Since $h$ is admissible, Proposition~\ref{lem:h} implies that $(h(X),h(Y))\in \mathrm{IC}_r$.
Note that $h(X)$ and $h(Y)$ are identical random variables taking the value $i$ with probability $p_i=\p(z_{i-1}<X\le z_i)$ for $i\in[4]$, respectively.
Since $z_1,z_2,z_3 \in \supp(X)$, we have $p_1,p_2,p_3>0$.
We exclude the index $i=4$ in the following proof when $p_4=0$.
The joint probability matrix of $(h(X),h(Y))$, denoted by $P=(p_{ij})_{4\times 4}$, is then given by
\begin{align*}
p_{ij} =\p\left(h(X)=i,h(Y)=j\right) &=
\p(z_{i-1}< X \le z_i,\,z_{j-1} < Y \le z_j)\\
&=H(z_i,z_j)-H(z_{i-1},z_j)-H(z_{i},z_{j-1})+H(z_{i-1},z_{j-1}) 
 =:p_{ij}(H)
\end{align*}
for $i,j \in [4]$.
Let $H\T(x,y)=H(y,x)$ be the distribution function of $(Y,X)$.
By Proposition~\ref{prop:ic:finite:identical},
we have that
\begin{align}\label{eq:identity:reduced}
\frac{p_{ij}(H)+p_{ij}(H\T)}{2}=r p_i\id_{\{i=j\}}+ (1-r)p_i p_j,\quad  i,j \in \{1,2,3,4\}.
\end{align}
Since $p_{11}(H)=p_{11}(H\T)=H(z_1,z_1)$ and $p_1 = F(z_1)$, we have
$H(z_1,z_1)=r \min(F(z_1),F(z_1))+ (1-r)F(z_1)F(z_1)$.
Next, by taking $(i,j)=(1,2)$ in \eqref{eq:identity:reduced}, we have
\begin{align*}
\frac{H(z_1,z_2)+H(z_2,z_1)}{2}&=H(z_1,z_1) + \frac{p_{12}(H)+p_{12}(H\T)}{2}\\
&= r \min(F(z_1),F(z_1))+ (1-r)F(z_1)F(z_1)  + (1-r)F(z_1)\{F(z_2)-F(z_1)\}\\
&= r \min(F(z_1),F(z_2))+ (1-r)F(z_1)F(z_2).
\end{align*}
By repeating this calculation for all pairs of indices $(i,j) \in [4]^2$, we eventually obtain
\begin{align*}
\frac{H(z_i,z_j)+H(z_i,z_j)}{2}=r \min(F(z_i),F(z_j))+ (1-r)F(z_i)F(z_j),
\end{align*}
for all $(i,j)\in[4]^2$.
Since the set $\{z_1,z_2,z_3\}$ includes $x$ and $y$, we have~\eqref{eq:identical:identity} for $(x,x)$, $(x,y)$, $(y,x)$ and $(y,y)$.
Since $x$ and $y$, $x\neq y$ are taken arbitrary in $\supp(X)$,
we obtain the desired identity~\eqref{eq:identical:identity}.


Next, we show the sufficiency.
We show $(X,Y) \in \mathrm{IC}_r$ when $(X,Y)\sim H$ satisfies~\eqref{eq:identical:identity}. 

Let $g$ be an admissible function.
Denote by $H_g$ and $F_g$ the joint and marginal distributions of $(g(X),g(Y))$, respectively.
By taking $A=\{t\in \R:g(t)\le x\}$ and $B=\{t\in \R:g(t)\le y\}$ in~\eqref{eq:identical:identity:set}, we have that
\begin{align}\label{eq:identity:Hg}
\frac{H_g(x,y)+H_g(y,x)}{2} = r \min(F_g(x),F_g(y)) + (1-r)F_g(x)F_g(y),\quad (x,y) \in \R.
\end{align}
Note that $(x,y)\mapsto H_g(y,x)$ is the distribution function of $(g(Y),g(X))$ and the correlation coefficient is invariant under permutation, that is, $\cov(g(X),g(Y))=\cov(g(Y),g(X))$.
Together with the Fr\'echet-Hoeffding identity~\citep[Lemma~2 of~][]{L66}, \eqref{eq:identity:Hg} implies that
{\begin{align*}
&\cov(g(X),g(Y))+\cov(g(X),g(Y))\\
&=2\int_{\R}\int_{\R}
\left\{ r \min(F_g(x),F_g(y)) + (1-r)F_g(x)F_g(y) -F_g(x)F_g(y)
\right\}
\,\mathrm{d}x\,\mathrm{d}y\\
&= 2r\int_{\R}\int_{\R}
\left\{ \min(F_g(x),F_g(y)) -F_g(x)F_g(y)
\right\}
\,\mathrm{d}x\,\mathrm{d}y\\
&= 2r \var(g(X)) < \infty.
 \end{align*}}
Therefore, we have $\corr(g(X),g(Y))=r$ and we conclude $(X,Y)\in \mathrm{IC}^{\uparrow}_r$.
\end{proof}


\begin{proof}[\textbf{\upshape Proof of Corollary \ref{cor:ic:cont:identical}}]
It suffices to show that the range of the admissible invariant correlation is $0\le r \leq 1$.
Suppose~\eqref{eq:ic:cont:mixture:m:pi} holds.
Since the LHS of~\eqref{eq:ic:cont:mixture:m:pi} is a copula, so is the RHS.
Therefore, we have that
$
V_{rM + (1-r)\Pi}([\underline a,\overline a]\times [\underline b,\overline b])\geq 0
$
for all $\underline a,\overline a,\underline b,\overline b\in\I$ such that $\underline a\leq \overline a$ and $\underline b\leq \overline b$, where $V_C$ is a $C$-volume of a copula $C$.
Consider the case when $\underline a\leq \underline b \leq \overline b\leq \overline a$.
Then, for $l:= \overline a - \underline a$, we have that $$V_{rM + (1-r)\Pi}([\underline a,\overline a]\times [\underline b,\overline b])=r ( \overline b - \underline b) + (1-r)  ( \overline a - \underline a) ( \overline b - \underline b)\ge 0,$$ that is, $r + (1-r)l\geq 0$. Equivalently, we have $r \geq - l / (1-l)$. By letting $l\downarrow 0$, we obtain $r \geq 0$.
 \end{proof}

\subsection{Proofs in Section \ref{sec:cha:ICM}}\label{sec:proof:cha:ICM}

\begin{proof}[\textbf{\upshape Proof of Proposition \ref{prop:newmodel}}]
 \begin{enumerate}[(i)]
 \item For $i\in [d]$ and $t\in [0,1]$, we have $$\p(X_i\le t) = \sum_{j=1}^k \p( Z_{ij} =1) \p(U_j\le t) = t  \sum_{j=1}^k \E[Z_{ij}]=t .$$ 
 Hence, $X_i$ has the standard uniform distribution. 
 \item We can check that the copula of $(X_1,X_2)$  is given by
\begin{align*} \p(X_1\le u_1,X_2\le u_2) & = \sum_{j=1}^k  \p(Z_{1j }Z_{2j}=1) \min(u_1,u_2) +\left(1- \sum_{j=1}^k \p(Z_{1j }Z_{2j}=1)   \right) u_1u_2
\\& = \E[\mathbf Z_1^\top \mathbf Z_2] \min(u_1,u_2) +\left(1-  \E[\mathbf Z_1^\top \mathbf Z_2] \right) u_1u_2,
\end{align*} 
and this argument also applies to the other pairs. 
Therefore, each pair of components of $\bX$ has a positive Fr\'echet copula, and hence it has an invariant correlation. By Proposition \ref{prop:high-d}, we know that $\bX $ has an invariant correlation matrix. 
 \item  From (ii), we know that the correlation between $X_1$ and $X_2$ is $\E[ \mathbf Z_1^\top \mathbf Z_2] $, and similarly for the other pairs. Therefore, the correlation matrix of  $\mathbf X$ is  $\E[\Gamma  \Gamma^\top]$. \qedhere 
 \end{enumerate} 
 \end{proof}

For the proof of  Theorem \ref{thm:ch:mat}, we first present the following two lemmas.

 \begin{lemma}\label{prop:z}
 For $k\in \N$, the following statements hold:
 (i) $\mathcal Z_{d,k}$ is increasing in $k$;
 (ii) $\mathcal Z_{d,k}$ is convex;
 (iii) $\mathcal Z_{d,k}\subseteq \Theta_d$. In particular, any matrix in $\mathcal Z_{d,k}$ is a correlation matrix.
 \end{lemma}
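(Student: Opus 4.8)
The plan is to prove the three parts in order, each by exhibiting an explicit categorical random matrix that realizes the required matrix, so that everything reduces to the bookkeeping identity $\Gamma\Gamma^\top = (\E[\Gamma\Gamma^\top])$-in-expectation together with properties already established in Proposition~\ref{prop:newmodel}.

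For part (i), given a $d\times k$ categorical random matrix $\Gamma$, I would simply append a column of zeros to obtain a $d\times(k+1)$ matrix $\Gamma'$. Each row of $\Gamma$ already has exactly one entry equal to $1$, hence so does each row of $\Gamma'$, so $\Gamma'$ is a $d\times(k+1)$ categorical random matrix; and $\Gamma'\Gamma'^\top=\Gamma\Gamma^\top$ pointwise, so $\E[\Gamma'\Gamma'^\top]=\E[\Gamma\Gamma^\top]$. This gives $\mathcal Z_{d,k}\subseteq\mathcal Z_{d,k+1}$.

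For part (ii), I would fix $\Sigma_1=\E[\Gamma_1\Gamma_1^\top]$ and $\Sigma_2=\E[\Gamma_2\Gamma_2^\top]$ in $\mathcal Z_{d,k}$ and $\lambda\in[0,1]$. Since $(\Omega,\mathcal A,\p)$ is atomless, I can realize $\Gamma_1$, $\Gamma_2$, and a Bernoulli random variable $B$ with $\p(B=1)=\lambda$ on it, with $B$ independent of $(\Gamma_1,\Gamma_2)$. Setting $\Gamma=B\Gamma_1+(1-B)\Gamma_2$ yields again a $d\times k$ categorical random matrix (exactly one unit entry per row, inherited from whichever of $\Gamma_1,\Gamma_2$ is selected). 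Using $B^2=B$, $(1-B)^2=1-B$, $B(1-B)=0$, I get $\Gamma\Gamma^\top=B\,\Gamma_1\Gamma_1^\top+(1-B)\,\Gamma_2\Gamma_2^\top$, and then independence of $B$ from $(\Gamma_1,\Gamma_2)$ gives $\E[\Gamma\Gamma^\top]=\lambda\Sigma_1+(1-\lambda)\Sigma_2\in\mathcal Z_{d,k}$, proving convexity.

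For part (iii), I would invoke Proposition~\ref{prop:newmodel}: for any $d\times k$ categorical random matrix $\Gamma$, the random vector $\mathbf X=\Gamma\mathbf U$ in~\eqref{eq:newmodel} has continuous (standard uniform) identical marginals and invariant correlation matrix $\E[\Gamma\Gamma^\top]$, so $\E[\Gamma\Gamma^\top]\in\Theta_d$; since every invariant correlation matrix lies in $\mathcal P_d$, the matrix is in particular a correlation matrix. All arguments here are elementary; the only point that needs a little care is the mixture construction in part (ii), where one must check that $\Gamma=B\Gamma_1+(1-B)\Gamma_2$ still has a single unit entry in each row and that the cross terms vanish by $B(1-B)=0$ — but this presents no real obstacle.
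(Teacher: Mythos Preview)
Your proof is correct and follows essentially the same approach as the paper: appending a zero column for (i), a Bernoulli-mixture $\Gamma=B\Gamma_1+(1-B)\Gamma_2$ for (ii), and a direct appeal to Proposition~\ref{prop:newmodel} for (iii). You supply slightly more detail (the explicit vanishing of cross terms via $B(1-B)=0$ and the atomless assumption to realize an independent $B$), but the arguments are the same.
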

 \begin{proof}[\textbf{\upshape Proof of Lemma \ref{prop:z}}]
 For (i), it suffices to note   $\mathcal Z_{d,k-1}\subseteq  \mathcal Z_{d,k}$ for $k\ge 2$, which can be checked by taking $Z_{i k}=0$ for all $i\in [d]$  in \eqref{eq:newmodel}. 
 Next, (ii) can be checked by the fact that for any event $A$ independent of two $d\times k$ categorical random matrices $\Gamma$ and $\Gamma'$, the matrix $\id_A \Gamma +(1-\id_A) \Gamma'$ 
 is a $d\times k$ categorical random matrix.
 Finally, (iii) follows directly from Proposition \ref{prop:newmodel}. \qedhere
 \end{proof}

    \begin{lemma}\label{prop:compatibility:1}
  $\mathcal Z_{d,k}= \mathrm{Conv}(\mathcal S_{d,k})$. 
  \end{lemma}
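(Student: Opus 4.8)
We need to show both inclusions. For the inclusion $\mathrm{Conv}(\mathcal S_{d,k}) \subseteq \mathcal Z_{d,k}$, recall from the discussion preceding the lemma that each clique partition point $\Sigma^{\mathbf A}$ for a $k$-partition $\mathbf A = (A_1,\dots,A_k)$ of $[d]$ is itself in $\mathcal Z_{d,k}$: it is the correlation matrix of the model $X_i^{\mathbf A} = \sum_{s=1}^k \id_{\{i \in A_s\}} U_s$, which has the form \eqref{eq:newmodel} with the (deterministic) categorical matrix $\Gamma$ whose $i$th row is the indicator of which block contains $i$. Hence $\mathcal S_{d,k} \subseteq \mathcal Z_{d,k}$, and since $\mathcal Z_{d,k}$ is convex by Lemma~\ref{prop:z}(ii), we get $\mathrm{Conv}(\mathcal S_{d,k}) \subseteq \mathcal Z_{d,k}$.

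For the reverse inclusion, take any $\Sigma = \E[\Gamma\Gamma^\top] \in \mathcal Z_{d,k}$ for a $d\times k$ categorical random matrix $\Gamma$. The key observation is that $\Gamma$ takes values in a \emph{finite} set: each row $\mathbf Z_i$ is one of the $k$ standard basis vectors $\be_1,\dots,\be_k$ of $\R^k$, so $\Gamma$ ranges over at most $k^d$ deterministic $\{0,1\}$-matrices. Conditioning on the value of $\Gamma$, write $\Sigma = \sum_{\gamma} \p(\Gamma = \gamma)\, \gamma\gamma^\top$, a finite convex combination. It therefore suffices to show that for each deterministic categorical matrix $\gamma$, the matrix $\gamma\gamma^\top$ equals $\Sigma^{\mathbf A}$ for some $k$-partition $\mathbf A$ of $[d]$. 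Given $\gamma$, define $A_s = \{i \in [d] : \gamma_{is} = 1\}$ for $s \in [k]$; since each row of $\gamma$ has exactly one $1$, the sets $A_1,\dots,A_k$ form a $k$-partition of $[d]$ (some possibly empty). Then $(\gamma\gamma^\top)_{ij} = \sum_{s=1}^k \gamma_{is}\gamma_{js} = \sum_{s=1}^k \id_{\{i \in A_s,\, j \in A_s\}} = \Sigma^{\mathbf A}_{ij}$, exactly \eqref{eq:cp}. Hence $\gamma\gamma^\top \in \mathcal S_{d,k}$, and $\Sigma$ is a convex combination of elements of $\mathcal S_{d,k}$, i.e.\ $\Sigma \in \mathrm{Conv}(\mathcal S_{d,k})$.

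Neither direction presents a serious obstacle; this lemma is essentially a bookkeeping identity between categorical matrices and set partitions. The only point requiring a little care is making sure the correspondence $\gamma \leftrightarrow \mathbf A$ is well-defined in both directions — in particular allowing empty partition blocks (which correspond to columns of $\gamma$ that are identically zero) so that the map is onto all of $\mathcal S_{d,k}$ — and invoking finiteness of the support of $\Gamma$ to legitimately write the expectation $\E[\Gamma\Gamma^\top]$ as a genuine (finite) convex combination rather than an integral.
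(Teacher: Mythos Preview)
Your proof is correct and follows essentially the same approach as the paper's: both directions use that $\mathcal S_{d,k}\subseteq \mathcal Z_{d,k}$ together with convexity of $\mathcal Z_{d,k}$ for one inclusion, and for the other decompose $\E[\Gamma\Gamma^\top]$ as a finite convex combination over realizations $\gamma$, identifying each $\gamma\gamma^\top$ with the clique partition point of the partition $A_s=\{i:\gamma_{is}=1\}$. Your write-up is slightly more explicit about the finiteness of the support of $\Gamma$, but the argument is the same.
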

  \begin{proof}[\textbf{\upshape Proof of Lemma \ref{prop:compatibility:1}}]
   We have that
   $\mathcal Z_{d,k}\supseteq \mathrm{Conv}(\mathcal S_{d,k})$ since $\mathcal Z_{d,k}\supseteq \mathcal S_{d,k}$ and the set $\mathcal Z_{d,k}$ is convex.
   Therefore, it suffices to prove that $\mathcal Z_{d,k}\subseteq \mathrm{Conv}(\mathcal S_{d,k})$.

Let $\Gamma$ be a $d\times k$ categorical random matrix such that $\mathbb{E}[\Gamma \Gamma^\top] \in \mathcal Z_{d,k}$.
Let $\gamma=(\boldsymbol{\gamma}_1,\dots,\boldsymbol{\gamma}_d)^\top\in \{0,1\}^{d\times k}$ be any realization of the random matrix $\Gamma$.
Define $A(\gamma)= (A_1(\gamma),\dots,A_k(\gamma))$, where $A_s(\gamma)=\{i\in[d]:\gamma_{is}=1\}$, $s \in [k].$
Then $A(\gamma)$ is a partition of $[d]$ into $k$ subsets since
$\sum_{s=1}^k \gamma_{is}=1$ and thus every $i \in [d]$ belongs to only one of the partitioned subsets.
Moreover, we have $\gamma^\top\gamma\in \mathcal S_{d,k}$ since
\begin{align*}
(\gamma^\top\gamma)_{ij}=\boldsymbol{\gamma}_i^\top \boldsymbol{\gamma}_j=\sum_{s=1}^k\gamma_{is}\gamma_{js}=\sum_{s=1}^k\id_{\{\gamma_{is}=1\,\gamma_{js}=1\}}
=\sum_{s=1}^k\id_{\{i,j\in A_s(\gamma)\}}.
\end{align*}
Therefore, we have that
$\mathbb{E}[\Gamma\Gamma^\top]=\sum_{\gamma} \gamma\gamma^\top \p(\Gamma=\gamma)\in \mathrm{Conv}(\mathcal S_{d,k}).
$
As a result, we have $\mathcal Z_{d,k}=\mathrm{Conv}(\mathcal S_{d,k})$.
  \end{proof}

\begin{proof}[\textbf{\upshape Proof of Theorem \ref{thm:ch:mat}}]
The last two equalities follow directly from Part~(i) of Lemma~\ref{prop:z}, Lemma~\ref{prop:compatibility:1} and the fact that $\mathcal S_{d, k}=\mathcal S_{d,d}$ for $k\ge d$.
Moreover, Proposition~\ref{prop:newmodel} implies that $\Theta_d \supseteq \mathcal Z_{d,d}$.
  Therefore, it suffices to prove $\Theta_d \subseteq  \mathrm{Conv}(\mathcal S_{d,d})$.
  
  Let $\mathbf{Y}=(Y_1,\dots,Y_d)^\top$ be a continuous random vector with standard uniform marginals and invariant correlation $R=(r_{ij})_{d\times d}$.
  Let $L=\{(u,v)\in [0,1]:u<v\}$. 
  Then its volume with respect to $M$ and $\Pi$ are $V_M(L)=0$ and $V_{\Pi}(L)=1/2$, respectively.  
  By  Proposition~\ref{prop:high-d}, we have that
  \begin{align*}
\p(Y_i\neq Y_j)=\p((Y_i,Y_j)\in L)+\p((Y_j,Y_i)\in L)=2r_{ij}V_M(L)+2(1-r_{ij})V_{\Pi}(L)
=1-r_{ij},
  \end{align*}
  and hence $\p(Y_i=Y_j)=r_{ij}$ for all $i,j\in [d]$.

For $\mathbf{y}\in [0,1]^d$, let $A(\mathbf{y})=(A_1(\mathbf{y}),\dots,A_d(\mathbf{y}))$ be any partition of $[d]$ with $d$ sets such that two indices $i$ and $j$ are in the same partitioned subset if and only if $y_i=y_j$.
In addition, define the matrix $\Sigma(\mathbf{y})=(\Sigma_{ij}(\mathbf{y}))_{d\times d}$ by $\Sigma_{ij}(\mathbf{y})=\sum_{s=1}^d \id_{\{i,j \in A_s(\mathbf{y})\}}$.
Note that, although $\mathbf{y}$ takes a value on the uncountable set $[0,1]^d$, there is only a finite number of different partitions of $[d]$, and thus so is $\Sigma(\mathbf{y})$.
Let $\Sigma^{(1)},\dots,\Sigma^{(N)}$, $N\in \mathbb{N}$, be all possible such matrices.
For the given random vector $\mathbf{Y}$, let $\alpha_n=\p(\Sigma(\mathbf{Y})=\Sigma^{(n)})$, $n \in [N]$.
Then
\begin{align*}
r_{ij}=\p(Y_i=Y_j)=\mathbb{E}[\id_{\{Y_i=Y_j\}}]=\mathbb{E} \left[\sum_{s=1}^d\id_{\{i,j\in A_s(\mathbf{Y})\}}\right]=\mathbb{E}[\Sigma_{ij}(\mathbf{Y})].
\end{align*} 
Since $\Sigma^{(n)}\in \mathcal S_{d,d}$ for every $n\in [N]$ and $\sum_{n=1}^N \alpha_n=1$, we conclude that
$
R=\mathbb{E}[\Sigma(\mathbf{Y})]=\sum_{n=1}^N \alpha_n \Sigma^{(n)} \in \mathrm{Conv}(\mathcal S_{d,d}).
$
Therefore, we obtain the desired result.
  \end{proof}
    
\subsection{Proofs in Section \ref{sec:PRD}}
  
  \begin{proof}[\textbf{\upshape Proof of Proposition \ref{prop:X:prd}}]
Fix $i\in [d]$ and an increasing set $A\subseteq \mathbb{R}^d$. 
For every $s\in [k]$, the joint law of $(Z_{is},X_i)$ is identical to that of $(Z_{is},U_s)$, where $Z_{is}$ is the $(i,s)$th element of $\Gamma$. Indeed, for any $A\subseteq[0,1]$, we have that
\begin{align*}
    \p(Z_{is}=1,~X_i \in A)&=\p\left(Z_{is}=1,~\sum_{l=1}^k Z_{il}U_l \in A\right)=
    \p(Z_{is}=1,~U_s \in A).
\end{align*}
This implies the independence between $Z_{is}$ and $X_i$ since
\begin{align*}
    \p(Z_{is}=1,~X_i \in A)&=
    \p(Z_{is}=1,~U_s \in A)=\p(Z_{is}=1)\p(U_s \in A) =\p(Z_{is}=1)\p(X_i \in A).
\end{align*}
 Therefore, we have that, for $x\in [0,1]$,
 \begin{align*}
\p(\mathbf{X}\in A\mid X_i = x)&=\sum_{s=1}^k\p(\mathbf{X}\in A\mid X_i=x, Z_{is}=1)\p(Z_{is}=1\mid X_i=x) \\&=\sum_{s=1}^k\p(\mathbf{X}\in A\mid U_s=x, Z_{is}=1)\p(Z_{is}=1),
\end{align*}
and hence it suffices to prove that $x\mapsto \p(\mathbf{X}\in A\mid U_s=x, Z_{is}=1)$ is increasing for every $s\in [k]$.
Let $\gamma_{-s}$ be a $d\times (k-1)$ matrix obtained from deleting the $s$th column from $\gamma$, and $\mathbf{u}_{-s}$ be a $(k-1)$-dimensional vector obtained from deleting the $s$th element from $\mathbf{u}$.
Since $U_s$ is independent of $\Gamma$ and $\mathbf{U}_{-s}$, we have that
\begin{align*}
\p(\mathbf{X}\in A\mid U_s=x, Z_{is}=1)&=\p(\Gamma_{-s}\mathbf{U}_{-s}+x(Z_{is},\dots,Z_{ds})^\top \in A\mid U_s=x,Z_{is}=1)\\
&=\p(\Gamma_{-s}\mathbf{U}_{-s}+x(Z_{is},\dots,Z_{ds})^\top \in A\mid Z_{is}=1).
\end{align*}
The probability in the last expression is increasing in $x$ since $(Z_{is},\dots,Z_{ds})\ge \mathbf{0}_{d}$ and $A$ is an increasing set.
\end{proof}

\subsection{Proofs in Section \ref{sec:ic:inc}}



We first present some lemmas for the proof of Theorem \ref{thm:icincrease0}. 

\begin{lemma}\label{lem:bi_tri}
Suppose that $X$ is bi-atomic and $Y$ is tri-atomic. If $(X,Y) \in \mathrm{IC}_r^{\uparrow}$ for some $r\in[-1,1]$, then $r=0$.
\end{lemma}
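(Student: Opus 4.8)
The plan is to use that $X$ is bi-atomic to collapse $\corr(g(X),g(Y))$ into a one-parameter family, and then to show that this family cannot be constantly equal to a nonzero number.

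First I would fix notation: write $\supp(X)=\{x_1,x_2\}$ with $x_1<x_2$ and $\supp(Y)=\{y_1,y_2,y_3\}$ with $y_1<y_2<y_3$, and set $p_i=\p(X=x_i)$, $q_j=\p(Y=y_j)$, $p_{ij}=\p(X=x_i,Y=y_j)$; all of $p_1,p_2,q_1,q_2,q_3$ lie in $(0,1)$. Put $e_j=p_{2j}-p_2q_j$, so that $e_1+e_2+e_3=0$. For any strictly increasing $g:\R\to\R$, the variables $g(X)$ and $g(Y)$ are bounded and non-degenerate, hence admissible; moreover $g(X)$ is an increasing affine function of $\id_{\{X=x_2\}}$, and $g(Y)$ is an increasing affine function of the $\{0,t,1\}$-valued variable with $t=(g(y_2)-g(y_1))/(g(y_3)-g(y_1))\in(0,1)$. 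Since $\corr$ is invariant under increasing affine transforms of each coordinate, computing $\cov(\id_{\{X=x_2\}},g(Y))=e_3+e_2t$, the variance $q_2(1-q_2)t^2-2q_2q_3t+q_3(1-q_3)$ of the $\{0,t,1\}$-valued version of $g(Y)$, and $\var(\id_{\{X=x_2\}})=p_1p_2$, gives
$$\corr(g(X),g(Y))=\frac{e_3+e_2t}{\sqrt{p_1p_2}\,\sqrt{q_2(1-q_2)t^2-2q_2q_3t+q_3(1-q_3)}}.$$

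Next I would note that every $t\in(0,1)$ is realized: given a target $t$, take $g$ continuous, piecewise linear and strictly increasing with $g(y_1)=0$, $g(y_2)=t$, $g(y_3)=1$ (this forces $g(x_1)<g(x_2)$ automatically, since $x_1<x_2$). Consequently, if $(X,Y)\in\mathrm{IC}^{\uparrow}_r$, the displayed quantity equals $r$ for all $t\in(0,1)$, and squaring yields the polynomial identity
$$(e_3+e_2t)^2=r^2p_1p_2\bigl(q_2(1-q_2)t^2-2q_2q_3t+q_3(1-q_3)\bigr),\qquad t\in(0,1).$$

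Finally, supposing $r\neq0$, I would match the coefficients of these two quadratics (they agree on an infinite set), obtaining $e_2^2=r^2p_1p_2q_2(1-q_2)$, $e_2e_3=-r^2p_1p_2q_2q_3$, and $e_3^2=r^2p_1p_2q_3(1-q_3)$. Multiplying the first and the third and comparing with the square of the second gives $q_2(1-q_2)q_3(1-q_3)=q_2^2q_3^2$; dividing by $q_2q_3>0$ gives $(1-q_2)(1-q_3)=q_2q_3$, hence $q_2+q_3=1$, i.e.\ $q_1=0$, contradicting that $Y$ is tri-atomic. Therefore $r=0$. The only delicate point is the attainability of all shape parameters $t\in(0,1)$ by genuinely increasing transforms — this is precisely what lets the argument survive the restriction to increasing $g$, and it plays the role that the free affine parameter plays in the proof of Lemma~\ref{pro:tri}; once it is in hand, the rest is routine coefficient matching.
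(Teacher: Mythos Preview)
Your proof is correct and follows the same coefficient-matching strategy as the paper, but your parametrization is cleaner. The paper normalizes only the function $g$ itself (fixing $g(x_1)=0$, $g(x_2)=1$), which leaves three parameters $z_j=g(y_j)$ and forces a case analysis according to whether some $z_j$ is pinned by a coincidence between $\supp(X)$ and $\supp(Y)$; it then matches up to six coefficients and handles each case separately. You instead exploit that $\corr$ is invariant under \emph{separate} increasing affine transforms of each argument, so you may normalize $g(X)$ to $\id_{\{X=x_2\}}$ and $g(Y)$ to a $\{0,t,1\}$-valued variable simultaneously, reducing everything to a single free parameter $t\in(0,1)$ that is always attainable by a strictly increasing $g$ regardless of how the two supports sit relative to each other. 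This collapses the argument to three coefficient equations and eliminates the case analysis entirely; the algebra $(1-q_2)(1-q_3)=q_2q_3\Rightarrow q_1=0$ is exactly the contradiction the paper reaches in its ``$z_1=0$'' branch.
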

\begin{proof}[\textbf{\upshape Proof of Lemma \ref{lem:bi_tri}}]

Assume $\supp(X)=\{x_1, x_2\}$ for some $x_1<x_2$ and $\supp(Y)=\{y_1, y_2, y_3\}$ for some $y_1<y_2<y_3$. Let $P=(p_{ij})_{2\times 3}$ be the probability matrix, $\mathbf p=(p_1, p_2)^\top$ and $\mathbf q=(q_1, q_2, q_3)^\top$ be the marginal distributions of $X$ and $Y$, respectively, with $p_i=\p(X=x_i)$ and $q_j=\p(Y=y_j)$ for $i\in [2]$ and $j\in[3]$ and $S=(s_{ij})_{2\times 3}=P-\mathbf p^\top \mathbf q$.  We have $p_i>0$ for $i=1,2$ and $q_j>0$ for $j\in[3]$.

Since a linear transform of $g$ does not change $\corr(g(X), g(Y ))$, we can fix $g(x_1)=0$ and $g(x_2)=1$. 
Assume $g(y_1)=z_1$, $g(y_2)=z_2$ and $g(y_3)=z_3$ with $z_1\le z_2\le z_3$. As $(X,Y) \in \mathrm{IC}_r^{\uparrow}$, we have
$$\corr(g(X),g(Y))=\frac{z_1s_{21}+z_2s_{22}+z_3s_{23}}{\sqrt{p_2-p_2^2}\sqrt{z_1^2q_1+z_2^2q_2+z_3^2q_3-(z_1q_1+z_2q_2+z_3q_3)^2}}=r.$$
By matching the coefficients in front of $z_1^2$, $z_2^2$, $z_3^2$, $z_1z_2$, $z_1z_3$ and $z_2z_3$ terms, we get the system
\begin{subequations}
\begin{align}
&s_{21}^2=r^2(p_2-p_2^2)(q_1-q_1^2),\label{eq:ina}\\
&s_{22}^2=r^2(p_2-p_2^2)(q_2-q_2^2),\label{eq:inb}\\
&s_{32}^2=r^2(p_2-p_2^2)(q_3-q_3^2),\label{eq:inc}\\
&s_{21}s_{22}=r^2(p_2-p_2^2)q_1q_2,\label{eq:ind}\\
&s_{21}s_{23}=r^2(p_2-p_2^2)q_1q_3,\label{eq:ine}\\
&s_{22}s_{23}=r^2(p_2-p_2^2)q_2q_3.\label{eq:inf}
\end{align}
\end{subequations}
As $g$ can be any increasing function, at most one of $z_1,z_2,z_3$ can be fixed as 0.
If $z_3=0$, we have \eqref{eq:ina}, \eqref{eq:inb} and \eqref{eq:ind} hold simultaneously. Thus, $r\neq0$ implies $q_3=0$. 
If $z_2=0$, we have \eqref{eq:ina}, \eqref{eq:inc} and \eqref{eq:ine} hold simultaneously. Thus, $r\neq0$ implies $q_2=0$. 
If $z_1=0$, we have  \eqref{eq:inb}, \eqref{eq:inc} and \eqref{eq:inf} hold simultaneously. Thus, $r\neq0$ implies $q_1=0$. 
If none of $z_1,z_2,z_3$ is $0$, then we have $q_1=q_2=q_3 = 0$.
As $q_1, q_2, q_3>0$, we have $r=0$.
\end{proof}

\begin{lemma}\label{lem:tri}
Suppose both $X$ and $Y$ are  tri-atomic random variables with the same support.
If $(X,Y) \in \mathrm{IC}_r^{\uparrow}$ for some $r\in[-1,1]$, then $r=0$ or $F_X=F_Y$. \end{lemma}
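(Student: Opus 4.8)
The plan is to mimic the proof of Lemma~\ref{pro:tri}; the only real difference is that here only increasing transforms are available. Write $\supp(X)=\supp(Y)=\{x_1,x_2,x_3\}$ with $x_1<x_2<x_3$, and set $p_i=\p(X=x_i)$, $q_i=\p(Y=x_i)$, $p_{ij}=\p(X=x_i,Y=x_j)$ and $s_{ij}=p_{ij}-p_iq_j$; since $X,Y$ are tri-atomic, all $p_i,q_i$ lie in $(0,1)$. Assuming $r\neq 0$, I want to conclude $\mathbf p=\mathbf q$ (equivalently $F_X=F_Y$). In Lemma~\ref{pro:tri} one uses functions sending $(x_1,x_2,x_3)$ to $(a,0,1)$ with $a$ ranging over all of $\R$ and, by symmetry, permutes the roles of the three indices; this is not legitimate here, since an increasing function on three ordered points is, up to an increasing affine map, of the form $(0,t,1)$ with $t\in\I$. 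So I would instead use the three monotone families obtained by placing the varying value in each of the three slots: $g^{(1)}_t=(t,0,1)$ for $t\le 0$, $g^{(2)}_t=(0,t,1)$ for $t\in\I$, and $g^{(3)}_t=(0,1,t)$ for $t\ge 1$, all admissible because bounded.

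For each family I would compute $\corr(g^{(k)}_t(X),g^{(k)}_t(Y))=N_k(t)/(\sqrt{V^X_k(t)}\sqrt{V^Y_k(t)})$, where $N_k,V^X_k,V^Y_k$ are quadratics in $t$; for $k=1$, $N_1(t)=t^2s_{11}+t(s_{13}+s_{31})+s_{33}$ and $V^X_1(t)=t^2(p_1-p_1^2)-2tp_1p_3+(p_3-p_3^2)$, exactly as in Lemma~\ref{pro:tri}. On each stated $t$-range $g^{(k)}_t(X)$ and $g^{(k)}_t(Y)$ are non-degenerate, so the correlation is well defined there, and $(X,Y)\in\mathrm{IC}^{\uparrow}_r$ yields $N_k(t)^2=r^2V^X_k(t)V^Y_k(t)$ on an infinite set of $t$-values; since both sides are polynomials of degree at most $4$, this identity must hold for all $t\in\R$. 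For $k=1$ this is precisely the system \eqref{6a}--\eqref{6d}, and since $r\neq 0$ forces $s_{11},s_{33},s_{13}+s_{31}\neq 0$, the simplification done there gives \eqref{eq:tri}; repeating the argument for $k=2,3$ and relabelling indices produces two more relations. Setting $E=q_2q_3p_1-p_2p_3q_1$, $F=q_1q_3p_2-p_1p_3q_2$ and $D_{ij}=p_iq_j-q_ip_j$, the three relations read $FD_{23}=0$, $ED_{13}=0$, $ED_{12}=0$.

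The last step is a short case split. If $E\neq 0$, then $D_{12}=D_{13}=0$, so $p_1/q_1=p_2/q_2=p_3/q_3$, and summing to $1$ forces this ratio to be $1$. If $E=0$ but $F\neq 0$, then $D_{23}=0$, so with $c=p_2/q_2$ one has $p_2=cq_2$, $p_3=cq_3$ and, from $E=0$, $p_1=c^2q_1$; then $\sum_i p_i=1$ rearranges to $(c-1)(q_1c+1)=0$, whence $c=1$. If $E=F=0$, multiplying the two relations gives $p_3^2=q_3^2$, so $p_3=q_3$, and then $E=0$ gives $p_1/q_1=p_2/q_2$, so once more $\sum_i p_i=1$ forces $\mathbf p=\mathbf q$. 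In every case $\mathbf p=\mathbf q$, i.e.\ $F_X=F_Y$, which together with the trivial alternative $r=0$ proves the lemma.

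The coefficient matching is essentially a re-run of Lemma~\ref{pro:tri}, so the two genuinely new points are: (a) justifying the passage from the restricted ranges $t\le 0$, $t\in\I$, $t\ge 1$ to polynomial identities on all of $\R$ (this is where the three families substitute for the free parameter and the symmetry used in Lemma~\ref{pro:tri}); and (b) checking that the particular triple $FD_{23}=0$, $ED_{13}=0$, $ED_{12}=0$ — which is not the cyclic triple appearing in Lemma~\ref{pro:tri} — still forces $\mathbf p=\mathbf q$, and that all three families are genuinely required (omitting any one leaves actual counterexamples). I expect (b) to be the main obstacle.
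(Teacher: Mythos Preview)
Your proof is correct and is precisely what the paper's one-line ``similar argument as in Lemma~\ref{pro:tri}'' unpacks to: the polynomial-extension device in (a) recovers the full coefficient system \eqref{6a}--\eqref{6d} from each restricted $t$-range, after which Lemma~\ref{pro:tri}'s algebra and your case split in (b) finish the job. Your residual worry in (b) can be dismissed: $g^{(2)}$ and $g^{(3)}$ actually yield the \emph{same} polynomial identity (one is the other under $t\mapsto 1/t$, which merely reverses the order of the five coefficient equations), so only two distinct identities are really in play---but both $ED_{13}=0$ and $ED_{12}=0$ are legitimate consequences of that single ``dead index $1$'' identity via the two labellings of the free/one positions, and together with $FD_{23}=0$ from $k=1$ your own case analysis already shows these force $\mathbf p=\mathbf q$.
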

\begin{proof}[\textbf{\upshape Proof of Lemma \ref{lem:tri}}]
As $p_i=\p(X=x_i)>0$ and $q_i=\p(Y=y_j)>0$ for all $i,j\in[3]$, we can use the similar argument in the proof of Lemma \ref{pro:tri} to show that if $(X,Y) \in \mathrm{IC}_r^{\uparrow}$ with $r\neq 0$, then $F_X=F_Y$. 
\end{proof}

\begin{lemma}\label{lem:n-atomic_inc}
Suppose $X$ is $m$-atomic and $Y$ is $n$-atomic with $m\ge 2$, $n>2$ and $\supp(X)\neq \supp(Y)$.  If $(X,Y) \in \mathrm{IC}_r^{\uparrow}$ for some $r\in[-1,1]$, then $r=0$.
\end{lemma}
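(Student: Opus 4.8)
The plan is to use an increasing transform to reduce to the situation of a bi-atomic variable paired with a tri-atomic one, which was already settled in Lemma~\ref{lem:bi_tri}. Two facts will be used repeatedly: $\mathrm{IC}^{\uparrow}_r$ is stable under simultaneous increasing admissible transforms of the two coordinates (Corollary~\ref{cor:ic:inc:basic:properties}, item~(\ref{item:h:inc})), and, the Pearson correlation being permutation-invariant, $(X,Y)\in \mathrm{IC}^{\uparrow}_r\iff(Y,X)\in \mathrm{IC}^{\uparrow}_r$. Every bounded increasing step function $h$ is admissible, the transformed variables $h(X),h(Y)$ are non-degenerate as soon as $h$ is non-constant on the corresponding support, and the correlation is unchanged under such an $h$.

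The heart of the argument is the following claim: under the hypotheses there exists an increasing step function $h:\R\to\R$ with $\{|\supp(h(X))|,|\supp(h(Y))|\}=\{2,3\}$; that is, one of $h(X),h(Y)$ becomes bi-atomic and the other tri-atomic. Granting this, Lemma~\ref{lem:bi_tri} applied to whichever of $(h(X),h(Y))$, $(h(Y),h(X))$ lists the bi-atomic variable first gives $r=\corr(h(X),h(Y))=\corr(X,Y)=0$.

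To prove the claim, note that an increasing step function corresponds to a partition of the finite set $T=\supp(X)\cup\supp(Y)$ into consecutive blocks (one increasing value per block), with $|\supp(h(X))|$ equal to the number of blocks meeting $\supp(X)$ and likewise for $Y$. I would split into cases according to the relative position of the two supports. (i) If $\supp(X)\subsetneq\supp(Y)$, then $T=\supp(Y)$ has at least three points, and one partitions $\supp(Y)$ into three consecutive blocks exactly one of which avoids $\supp(X)$, so that $|\supp(h(Y))|=3$ and $|\supp(h(X))|=2$; this needs three sub-cases, according to whether $\min\supp(Y)$ and $\max\supp(Y)$ lie in $\supp(X)$, in which one locates the empty block using the extreme points of $\supp(X)$ and a point of $\supp(Y)\setminus\supp(X)$. (ii) If $\supp(Y)\subsetneq\supp(X)$, apply case (i) to the swapped pair $(Y,X)$; here $|\supp(Y)|\ge2$ and $|\supp(X)|\ge3$ hold automatically. (iii) If the supports are incomparable, pick $p$ in the symmetric difference and, after possibly swapping coordinates, assume $p\in\supp(X)\setminus\supp(Y)$; the prototypical construction is the three-level step function isolating $p$ (constant below $p$, an intermediate value at $p$, constant above $p$), which makes $h(X)$ tri-atomic and $h(Y)$ bi-atomic whenever $p$ lies strictly between the minimum and the maximum of both supports.

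The routine containment cases aside, the main obstacle is case (iii): the naive isolating transform fails precisely when the symmetric-difference point is extremal in $\supp(X)$ or lies outside the range of $\supp(Y)$, so one must branch on the position of $p$ (and, in the awkward sub-cases, of a second witness point in $\supp(Y)\setminus\supp(X)$) relative to the ranges of both supports, each time choosing a three-level step function realizing the desired $\{2,3\}$ split — exploiting $|\supp(Y)|\ge3$ to break $\supp(Y)$ into three blocks while keeping $\supp(X)$ within two, or symmetrically. Checking that such a transform always exists, so that one is never left with a merely ``bi-atomic versus bi-atomic'' pair (which, unlike for $\mathrm{IC}$, carries no information for $\mathrm{IC}^{\uparrow}$ by Example~\ref{ex:bernoulli}), is the crux of the proof.
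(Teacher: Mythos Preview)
Your approach is correct and coincides with the paper's: both reduce, via an increasing step function, to a bi-atomic/tri-atomic pair and then invoke Lemma~\ref{lem:bi_tri} (together with the permutation symmetry of $\mathrm{IC}^{\uparrow}_r$). The paper's own proof is in fact terser than yours---it exhibits one representative transform and then writes ``We omit the details of all other cases here''---so your outline of the case split (containment versus incomparable supports, with the extremal-point sub-cases) already goes further than the paper does.
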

\begin{proof}[\textbf{\upshape Proof of Lemma \ref{lem:n-atomic_inc}}]

It suffices to show that, for such $X$ and $Y$, there exists an increasing function $g$ such that $g(X)$ is bi-atomic and $g(Y)$ is tri-atomic, or $g(X)$ is tri-atomic and $g(Y)$ is bi-atomic. Then, we can use Lemma \ref{lem:bi_tri} to get  $r=0$.
The existence of $g$ can be checked directly by exhausting all possibilities of the supports of $X$ and $Y$.
For instance, if there exists $y_0\in \supp(Y)$ but not in $\supp(X)$, and 
each of the events $\{X<y_0\}$, $\{X>y_0\}$, $\{Y<y_0\}$ and $\{Y>y_0\}$ has non-zero probability, 
then the function $g: y\mapsto \id_{\{y>y_0\}} + \id_{\{y\ge y_0\}} $
is sufficient. 
We omit the details of all other cases here.
\end{proof}

\begin{proof}[\textbf{\upshape Proof of Theorem \ref{thm:icincrease0}}]
It is clear that $\mathrm{IC}_r\subseteq \mathrm{IC}^{\uparrow}_r$ for all $r\in [-1,1]$, which implies the ``only if" part. We will show the ``if" part below.

\begin{enumerate}[(i)]
\item If $(X, Y) \in \mathrm{IC}^{\uparrow}_0$, then $\cov(g(X), g(Y))=0$ for all admissible decreasing functions $g$.
By taking $g(x)=\id_{\{x\le a\}}$, we have $\cov(\id_{\{X\le a\}},\id_{\{Y\le a\}})=0$ for every $a\in \R$.
For every fixed $a,b\in \R$, let $g(x)=\id_{\{x\le a\}}+\id_{\{x\le b\}}$. Since  $\cov(g(X), g(Y))=0$, $\cov(\id_{\{X\le a\}},\id_{\{Y\le a\}})=0$ and $\cov(\id_{\{X\le b\}},\id_{\{Y\le b\}})=0$,  we have $\cov\left(\id_{\{X\le a\}},\id_{\{Y\le b\}}\right)+\cov\left(\id_{\{X\le b\}},\id_{\{Y\le a\}}\right)=0.$ Hence $(X,Y)$ is quasi-independent, and thus $(X,Y)\in \mathrm{IC}_0$ by Theorem \ref{thm:ch:zero:ic}.

\item Assume $X$ and $Y$ have identical distributions.
By Theorem~\ref{thm:main:sec:4}, it only remains to show that~\eqref{eq:identical:identity} holds when $(X,Y)\in \mathrm{IC}^{\uparrow}_r$.

Notice that the necessity part of the proof of Theorem~\ref{thm:main:sec:4} directly applies with a few modifications since, for example, the function~\eqref{eq:h:function} is increasing and admissible, and thus Item~(\ref{item:h:inc}) in Corollary~\ref{cor:ic:inc:basic:properties} is used instead of Proposition~\ref{lem:h}.
By carefully checking the proof of Theorem~\ref{thm:main:sec:4}, we only need to show that Proposition~\ref{prop:ic:finite:identical} holds for the case of $\mathrm{IC}_r^{\uparrow}$.
Using $\mathrm{IC}^{\uparrow}_r \subseteq \mathrm{IC}_r$, we know that $n$-atomic identically distributed random variables $X$ and $Y$ satisfy $(X,Y)\in \mathrm{IC}_r^{\uparrow}$ if $r$ satisfies~\eqref{eq:range:invariant:correlation} and the probability matrix $P$ satisfies~\eqref{eq:ic:finite:mixture:m:pi}.
We show the remaining ``only if" part of this statement.

To this end, let $X$ and $Y$ be $n$-atomic identically distributed random variables taking values in $\mathcal X=\{x_1,\dots,x_n\}$, $n\in \N$, with $x_1<\cdots<x_n$. 
Write $P=(p_{ij})_{n\times n}$, $p_{ij}=\p(X=x_i,Y=x_j)$, with $\bp=P\bone_n$ and $D=\diag(\bp)$.

Suppose that $(X,Y)\in \mathrm{IC}_r^{\uparrow}$ and let $A:=P +P\T-2 r D - 2(1-r)\bp\bp\T$.
Since~\eqref{eq:ic:finite} holds for all admissible increasing functions $g$, \eqref{eq:ic:finite:quadratic} holds for all $\bz \in \breve{\R}^n$ such that $z_1<\cdots < z_n$.
Therefore, for such $\bz$'s, we have $A\bz = \bzero_n$.
For $\bz \in \breve{\R}^n$ such that $z_1<\cdots < z_n$, there exists a sufficiently small $\delta>0$ such that $\bz + \epsilon \be_j \in \breve{\R}^n$ is still increasing for all $\epsilon < \delta$ and $j\in [n]$.
Since $A\bz=A(\bz + \epsilon \be_j)=\bzero_n$, we obtain $A\be_j=\bzero_n$ for all $j\in [n]$, and thus $A=O$.
Therefore, we have~\eqref{eq:ic:finite:mixture:m:pi}.
The range condition~\eqref{eq:range:invariant:correlation} of $r$ necessarily holds for $r D + (1-r)\bp\bp\T$ to be a probability matrix.

\item For $X,Y\in \mathcal{L}^2$ with different distributions and $|\supp(Y)|>2$, we will show that $(X,Y) \in \mathrm{IC}_0^{\uparrow}$ implies $r=0$. 


 As $X$ and $Y$ have different distributions, there exists $(a,b]$ such that $\p(X\in (a,b])\neq \p(Y\in (a,b])$. Hence, we can find an increasing function $g$ such that $g(X)$ is $m$-atomic and $g(Y)$ is $n$-atomic for some  $m\ge 2$ and $n>2$ where $g(X)$ and $g(Y)$ have different distributions. Hence, without loss of generality, we can assume $X$ is $m$-atomic and  $Y$ is $n$-atomic for some  $m\ge 2$ and $n>2$ with different distributions.

If $\supp(X)\neq \supp(Y)$, we have $r=0$ by Lemma \ref{lem:n-atomic_inc}.
Assume $\supp(X)=\supp(Y)$.
As $X,Y$ have different distributions, we can always find $a\in \supp(X)$ such that $\p(X>a)\neq \p(Y>a)$. For such $a$, let $g(x)=\id_{\{x>a\}}+\id_{\{x>b\}}+\id_{\{x>c\}}$ for some  $b,c \in \supp(X)$ with $a<b<c$.  Thus, $g(X)$ and $g(Y)$ are tri-atomic random variables with the same support but different distribution.
By Lemma \ref{lem:tri}, we have $r=0$.


Therefore, we have $(X,Y) \in \mathrm{IC}^{\uparrow}_0$. Using Item (i) above, we have $(X,Y) \in \mathrm{IC}_0$.\qedhere
\end{enumerate}
\end{proof}

\subsection{Proofs in Appendix~\ref{app:auxiliary}}

\begin{proof}[\textbf{\upshape Proof of Proposition~\ref{prop:tdm}}]
Since the statement on $\kappa_g$-matrices is obvious, we will show that $\mathbf{X}$ has the tail-dependence matrix $R$.
Without loss of generality, we can assume that the identical marginal of $\mathbf{X}$ is the standard uniform distribution.
Fix $i,j\in [d]$.
Since $\lambda_{ij}=\lambda_{ji}$, Proposition~\ref{prop:high-d} leads to
\begin{align*}
\lambda_{ij}&=\frac{\lambda_{ij}+\lambda_{ji}}{2}=\lim_{u\downarrow 0}\frac{C_{ij}(u,u)+C_{ji}(u,u)}{2u}
=\lim_{u\downarrow 0}\frac{r_{ij}M(u,u)+(1-r_{ij})\Pi(u,u)}{u}
=r_{ij}.
\end{align*}
\end{proof}

\begin{proof}[\textbf{\upshape Proof of Proposition \ref{prop:conformal}}]
Since $S_{n+1},\dots,S_{n+d}$ are independent conditional on the null training sample $\mathcal D=\{S_i: i\in [n]\}$, we have  
    \begin{align*}
        \p\left(P_i=\frac{j_i}{n+1},i\in \mathcal N\right)&=
        \mathbb{E}_{\mathcal D}\left.\left[\p\left(P_i=\frac{j_i}{n+1},i\in \mathcal N\,\right\vert\, \mathcal D\right)\right]\\
       &= \mathbb{E}_{\mathcal D}\left.\left[\prod_{i\in \mathcal N}\p\left(P_i=\frac{j_i}{n+1}\,\right\vert\, \mathcal D\right)\right]\\& =  \mathbb{E}_{\mathcal D}\left[\prod_{i\in \mathcal N}
           \left(S_{(j_i)}-S_{(j_i-1)}\right)
           \right]
        \end{align*}
    for $ j_i \in [n+1]$, where $0=S_{(0)}<S_{(1)}<\cdots < S_{(n)}<S_{(n+1)}=1$ are the order statistics of the null training sample $(S_1\dots,S_n)$.
    Since the conformal p-values are independent of the distribution of scores, we can assume without loss of generality that $S_1,\dots,S_n$ are independently distributed of the standard uniform distribution.
    Let $T_j=S_{(j)}-S_{(j-1)}$, $j\in [n+1]$.    Then $(T_1,\dots,T_{n+1})$ follows the Dirichlet distribution with the parameter vector $\bone_{n+1}\in\mathbb{R}^{n+1}$.
    Therefore, 
    \begin{align*}
    \mathbb{E}_{\mathcal D}\left[\prod_{i\in \mathcal N}
           \left(S_{(j_i)}-S_{(j_i-1)}\right)
           \right]&=
               \mathbb{E}_{\mathcal D}\left[
            T_1^{N_1}\cdots T_{n+1}^{N_{n+1}}
\right]\\&=\frac{B(1+N_1,\dots,1+N_{n+1})}{B(\bone_{n+1})}\\
&=\frac{N_1!\cdots N_{n+1}!}{(n+m)(n+m-1)\cdots(n+1)},
    \end{align*}
    where $B$ is the $(n+1)$-dimensional Beta function.

    In particular, when $m=1$, we have $\p(P_i=j/(n+1))=1/(n+1)$, $j \in [n+1]$.
When $m=2$, we have
\begin{align*}
\p\left(P_i=\frac{j}{n+1},P_{i'}=\frac{j'}{n+1}\right)=\frac{1}{n+2}\cdot \frac{1}{n+1}\id_{\{j=j'\}}+ \left(1-\frac{1}{n+2}\right)\cdot \frac{1}{n+1}\cdot \frac{1}{n+1},
\end{align*}
which is the exchangeable case of the model~\eqref{eq:ic:infinite:mixture:m:pi} and thus has the invariant correlation $1/(n+2)$.
\end{proof}

\end{document}